\documentclass[11pt, a4paper]{article}

%--------------------
\usepackage{typearea}
\typearea{12}

\usepackage{amsmath}
\usepackage{amsthm}
\usepackage{amssymb}
\usepackage{color}
\usepackage{url}
\usepackage{graphicx}
\usepackage{float}
\usepackage{comment}
\usepackage{ifthen}

\newcommand{\dd}{\mathrm{d}}
\newcommand{\ii}{\mathrm{i}}

\theoremstyle{definition}
\newtheorem{thm}{Theorem}
\newtheorem{prop}[thm]{Proposition}
\newtheorem{lem}[thm]{Lemma}
\newtheorem{cor}[thm]{Corollary}

\newtheorem{rem}{Remark}
\newtheorem{assump}{Assumption}

\allowdisplaybreaks[4]

\numberwithin{equation}{section}
\numberwithin{thm}{section}
\numberwithin{rem}{section}

\newcounter{journal}
\setcounter{journal}{0} % 0 = none, 1 = IMAJNA

%--------------------
\title{
Design of accurate formulas for approximating functions 
in weighted Hardy spaces by discrete energy minimization}
\author{
Ken'ichiro Tanaka%
\footnote{
Department of Mathematical Informatics,
Graduate School of Information Science and Technology,
University of Tokyo. 
7-3-1 Hongo, Bunkyo-ku, Tokyo, 113-8656, Japan. 
\texttt{kenichiro@mist.i.u-tokyo.ac.jp}}, 
Masaaki Sugihara%
\footnote{
Department of Physics and Mathematics, 
College of Science and Engineering, Aoyama Gakuin University. 
5-10-1, Fuchinobe, Chuo-ku, Sagamihara-shi, Kanagawa 252-5258, Japan. }
}
\date{January 13, 2018}

%--------------------
\begin{document}

\maketitle

\begin{abstract}
We propose a simple and effective method for 
designing approximation formulas for weighted analytic functions.
We consider spaces of such functions according to weight functions 
expressing the decay properties of the functions. 
Then, we adopt the minimum worst error of the 
$n$-point approximation formulas in each space 
for characterizing the optimal sampling points for the approximation. 
In order to obtain approximately optimal sampling points, 
we consider minimization of a discrete energy related to the minimum worst error. 
Consequently, 
we obtain an approximation formula 
and its theoretical error estimate in each space. 
In addition, from some numerical experiments, 
we observe that the formula generated by the proposed method 
outperforms the corresponding formula derived with sinc approximation, 
which is near optimal in each space. 
\end{abstract}

\noindent
{\small
\textbf{Keywords:}
weighted Hardy space; 
function approximation; 
potential theory; 
discrete energy minimization; 
barycentric form.}

%--------------------
\section{Introduction}

In 
\ifthenelse{\value{journal} = 0}{%
the recent paper \cite{bib:TaOkaSu_PotApprox_2017}, 
}{%else
\cite{bib:TaOkaSu_PotApprox_2017},
}
the authors proposed a method for designing interpolation formulas on $\mathbf{R}$
for approximating functions in spaces of weighted analytic functions. 
They considered the weighted Hardy space defined by
\begin{align}
\label{eq:def_weighted_Hardy_rev}
\boldsymbol{H}^{\infty}(\mathcal{D}_{d}, w)
:=
\left\{
f : \mathcal{D}_{d} \to \mathbf{C} 
\ \left| \
f \text{ is analytic on } \mathcal{D}_{d} \text{ and } 
\| f \| < \infty 
\right.
\right\}, 
\end{align}
where $d > 0$, $\mathcal{D}_{d} = \{ z \in \mathbf{C} \mid \, | \mathop{\mathrm{Im}} z | < d \}$, 
$w$ is a weight function with $w(z) \neq 0$ for any $z \in \mathcal{D}_{d}$, and 
\begin{align}
\label{eq:def_weighted_Hardy_norm_rev}
\| f \| := \sup_{z \in \mathcal{D}_{d}} \left| \frac{f(z)}{w(z)} \right|. 
\end{align}
In this study, we drastically simplify the method and obtain approximation formulas 
in the spaces $\boldsymbol{H}^{\infty}(\mathcal{D}_{d}, w)$, 
competitive with the formulas reported previously. 
Furthermore, we broaden the class of the weight functions $w$ to which the method is applicable. 

The space $\boldsymbol{H}^{\infty}(\mathcal{D}_{d}, w)$
is important as a space of transformed functions 
that often appear in transformation-based formulas
for function approximation in sinc numerical methods
\ifthenelse{\value{journal} = 0}{%
\cite{bib:Stenger1993, bib:Stenger2011, bib:Sugihara_NearOpt_2003, bib:TaSuMu_DE_Sinc_2009}. 
}{%else
\citep{bib:Stenger1993, bib:Stenger2011, bib:Sugihara_NearOpt_2003, bib:TaSuMu_DE_Sinc_2009}. 
}
These numerical methods are based on the sinc function $\mathop{\mathrm{sinc}}(x) = (\sin \pi x)/(\pi x)$ 
with a useful variable transformation $\psi$.  
The building block of the method is the sinc approximation given by
\begin{align}
f(x) \approx \sum_{k = -N_{-}}^{N_{+}} f(kh) \, \mathop{\mathrm{sinc}}(x/h - k), 
\label{eq:trans_sinc_approx}
\end{align}
where  $h > 0$. 
Usually, we consider approximation of an analytic function $g$ defined on a domain $D \subset \mathbf{C}$. 
Then, we employ a map $\psi: \mathcal{D}_{d} \to D$ as a variable transformation
and apply the sinc approximation in~\eqref{eq:trans_sinc_approx} to the transformed function $f(x) = g(\psi(x))$ for $x \in \mathbf{R}$. 
If the function $g$ has a decay property, 
the map $\psi$ achieves the decay of function $f$,  
which enables us to select $N_{-}$ and $N_{+}$ to be small in the sum in~\eqref{eq:trans_sinc_approx}. 
Owing to this simple principle, the sinc interpolation is useful for various numerical methods. 
Typical maps $\psi = \psi_{i}$ used as such transformations are%
\footnote{
\ifthenelse{\value{journal} = 0}{%
These maps are also used for numerical integration based on a variable transformation. 
}{%else
These maps are also used for numerical integration based on a variable transformation. 
The maps $\psi_{1}$ and $\psi_{2}$ are used in 
\cite{bib:Haber_tanh_1977, bib:Schwartz_NumInt_1969, bib:Stenger1993, bib:Stenger2011, bib:TakahasiMoriVT1973}
and \cite{bib:TakahasiMoriDE1974}, respectively.
}}:
\begin{align}
& \psi_{1}(x) = \tanh \left( \frac{x}{2} \right) \qquad 
\text{(TANH transformation%
\ifthenelse{\value{journal} = 0}{%
~\cite{bib:Haber_tanh_1977, bib:Schwartz_NumInt_1969, bib:Stenger1993, bib:Stenger2011, bib:TakahasiMoriVT1973}%
}{%else
})}, 
\label{eq:TANHtrans} \\
& \psi_{2}(x) = \tanh \left( \frac{\pi}{2} \sinh x \right) \qquad 
\text{(DE transformation%
\ifthenelse{\value{journal} = 0}{%
~\cite{bib:TakahasiMoriDE1974}%
}{%else
})},
\label{eq:DEtrans}
\end{align}
where ``DE'' denotes ``double exponential''.
Therefore, we consider a weight function $w$ to represent the decay property of $f$ given by $\psi$. 

\ifthenelse{\value{journal} = 0}{%
Sugihara~\cite{bib:Sugihara_NearOpt_2003}% 
}{%else
\cite{bib:Sugihara_NearOpt_2003}% 
}
showed that the sinc interpolation was a ``nearly optimal'' approximation in $\boldsymbol{H}^{\infty}(\mathcal{D}_{d}, w)$
for several weight functions $w$. 
He adopted minimum worst error 
$E_{n}^{\mathrm{min}}(\boldsymbol{H}^{\infty}(\mathcal{D}_{d}, w))$
of an $n$-point approximation formula in $\boldsymbol{H}^{\infty}(\mathcal{D}_{d}, w)$,
whose definition is given later in~\eqref{eq:def_E_min}, 
and showed that the error in the sinc interpolation for the functions in 
$\boldsymbol{H}^{\infty}(\mathcal{D}_{d}, w)$ was close to
$E_{n}^{\mathrm{min}}(\boldsymbol{H}^{\infty}(\mathcal{D}_{d}, w))$. 
However, an explicit optimal formula attaining $E_{n}^{\mathrm{min}}(\boldsymbol{H}^{\infty}(\mathcal{D}_{d}, w))$
is known only in a limited case 
\ifthenelse{\value{journal} = 0}{%
\cite{bib:TaOkaSu_Ganelius_arXiv2016}. 
}{%else
\citep{bib:TaOkaSu_Ganelius_arXiv2016}. 
}

\ifthenelse{\value{journal} = 0}{%
In the paper \cite{bib:TaOkaSu_PotApprox_2017}, 
}{%else
In \cite{bib:TaOkaSu_PotApprox_2017}, 
}
with a view to an optimal formula in $\boldsymbol{H}^{\infty}(\mathcal{D}_{d}, w)$ 
for a general weight function $w$, 
the authors started with the expression 
\begin{align}
E_{n}^{\mathrm{min}}(\boldsymbol{H}^{\infty}(\mathcal{D}_{d}, w))
& = 
\inf_{a_{j} \in \mathbf{R}} 
\left[
\sup_{x \in \mathbf{R}}
\left| 
w(x) 
 \prod_{j = 1}^{n} \tanh \left( \frac{\pi}{4d} (x-a_{j}) \right)
\right|
\right]  
\label{eq:CharMinErr_Intro}
\end{align}
given in~\cite{bib:Sugihara_NearOpt_2003}, 
and employed fundamental tools in potential theory%
\ifthenelse{\value{journal} = 0}{%
~\cite{bib:SaffTotik_LogPotExtField_1997}% 
}{%else
~\citep{bib:SaffTotik_LogPotExtField_1997}% 
}
to obtain an accurate approximation formula. 
Their method was based on approximating the equilibrium measure that minimized
the weighted Green energy by considering the integral equation 
corresponding to a Frostman type characterization of the equilibrium measure. 
The integral equation was slightly complex 
because it contained unknown parameters representing the support of the  equilibrium measure. 
For simplicity, 
the authors limited their study to weight functions that are even on $\mathbf{R}$
\ifthenelse{\value{journal} = 0}{%
\cite[Assumption 2.2]{bib:TaOkaSu_PotApprox_2017}. 
}{%else
\citep[Assumption 2.2]{bib:TaOkaSu_PotApprox_2017}. 
}
Subsequently, 
they used some heuristic techniques to derive an approximate density function for each equilibrium measure, 
and obtained a sequence of sampling points for interpolation by discretizing the density function. 

In this paper, 
we propose a simplified method for obtaining sampling points 
for approximating functions in $\boldsymbol{H}^{\infty}(\mathcal{D}_{d}, w)$.
This method is based on discrete energy minimization,
which determines the sampling points directly. 
It can be considered as a type of method that generates a good point configuration
by minimizing a certain functional, such as the Riesz energy%
\ifthenelse{\value{journal} = 0}{%
~\cite{bib:BrauchartGrabner_Survey2015}. 
}{%else
~\citep{bib:BrauchartGrabner_Survey2015}. 
}
Essentially, the proposed method is a discrete analogue of the minimization of the weighted Green energy. 
In general, 
discrete energy minimization is 
not easily tractable computationally
because it is not always a convex optimization problem. 
Then, we assume the strict log-concavity%
\footnote{Simple log-concavity is assumed for $w$ in 
\ifthenelse{\value{journal} = 0}{%
\cite[Assumption 2.3]{bib:TaOkaSu_PotApprox_2017}. 
}{%else
\citet[Assumption 2.3]{bib:TaOkaSu_PotApprox_2017}. 
}} 
of a weight function $w$ on $\mathbf{R}$.
On this assumption, 
the minimization problem becomes convex and 
we can show that it has a unique optimal solution and that it 
is characterized by a stationary condition. 
Moreover, we can compute it by a standard technique of convex optimization. 
In addition, we can deal with weight functions $w$ that are not even on $\mathbf{R}$, 
i.e., we can deal with a wider class of the spaces $\boldsymbol{H}^{\infty}(\mathcal{D}_{d}, w)$ 
than the previous study 
\ifthenelse{\value{journal} = 0}{%
\cite{bib:TaOkaSu_PotApprox_2017}.
}{%else
\citep{bib:TaOkaSu_PotApprox_2017}.
}

The rest of this paper is organized as follows.
Section~\ref{sec:prelim}
presents mathematical preliminaries, including some fundamental tools in potential theory.
In Section~\ref{sec:MinEnergy}, 
we analyze the discrete energy minimization problem
providing the sampling points for interpolation and
lower bound for the corresponding discrete potential.
In Section~\ref{sec:Approx}, 
we propose an approximation formula by using the sampling points
and bound its error in each space $\boldsymbol{H}^{\infty}(\mathcal{D}_{d}, w)$.
In Section~\ref{sec:num_algo},
we present some results of numerical experiments. 
Finally, in Section~\ref{sec:concl}, we conclude this work.

%--------------------
\section{Mathematical Preliminaries}
\label{sec:prelim}

%-----
\subsection{Weight functions and weighted Hardy spaces}

Let $d$ be a positive real number, and let $\mathcal{D}_{d}$ be the strip region defined by 
$\mathcal{D}_{d} := \{ z \in \mathbf{C} \mid |\mathop{\mathrm{Im}} z | < d \}$. 
In order to specify the weight functions $w$ on $\mathcal{D}_{d}$ mathematically, 
we use the function space $B(\mathcal{D}_{d})$ of all functions $\zeta$ that are analytic on $\mathcal{D}_{d}$, 
such that
\begin{align}
\lim_{x \to \pm \infty} \int_{-d}^{d} |\zeta(x+\ii y)|\, \dd y = 0
\end{align} 
and 
\begin{align}
\lim_{y \to d-0}
\int_{-\infty}^{\infty} ( |\zeta(x+\ii y)| + |\zeta(x-\ii y)| )\, \dd x < \infty.
\end{align} 
Then, we regard $w: \mathcal{D}_{d} \to \mathbf{C}$ as a weight function
if $w$ satisfies the following assumption. 
\begin{assump}
\label{assump:w}
Function $w$ belongs to $B(\mathcal{D}_{d})$, 
does not vanish at any point in $\mathcal{D}_{d}$, 
and takes positive real values in $(0,1]$ on the real axis.
\end{assump}

\noindent
Furthermore,
throughout this work, 
we assume the log-concavity of the weight function $w$. 

\begin{assump}
\label{assump:w_convex}
Function $\log w$ is strictly concave on $\mathbf{R}$.
\end{assump}

For the weight function $w$ that satisfies Assumptions \ref{assump:w} and \ref{assump:w_convex}, 
we define the weighted Hardy space% 
\footnote{
See 
\ifthenelse{\value{journal} = 0}{%
\cite[Chap.~10]{bib:Duren_Hardy_1970}
}{%else
\citet[Chap.~10]{bib:Duren_Hardy_1970}
}
as a reference for the Hardy spaces over general domains. 
}
on $\mathcal{D}_{d}$ by \eqref{eq:def_weighted_Hardy_rev}, i.e.,
\begin{align}
\notag 
\boldsymbol{H}^{\infty}(\mathcal{D}_{d}, w)
:=
\left\{
f : \mathcal{D}_{d} \to \mathbf{C} 
\ \left| \
f \text{ is analytic on } \mathcal{D}_{d} \text{ and } 
\| f \| < \infty 
\right.
\right\}, 
\end{align}
where
\begin{align}
\notag 
\| f \| := \sup_{z \in \mathcal{D}_{d}} \left| \frac{f(z)}{w(z)} \right|. 
\end{align}

%-----
\subsection{Optimal approximation}
\label{sec:DefOptApprox}

We provide a mathematical formulation for optimality of the approximation formula
in the space $\boldsymbol{H}^{\infty}(\mathcal{D}_{d}, w)$, 
with the weight function $w$ satisfying Assumptions \ref{assump:w} and \ref{assump:w_convex}. 
In this regard, for a given positive integer $n$, 
we first consider all the possible $n$-point interpolation formulas on $\mathbf{R}$ 
that can be applied to any function $f \in \boldsymbol{H}^{\infty}(\mathcal{D}_{d}, w)$. 
Subsequently, we choose a criterion that determines optimality of a formula in 
$\boldsymbol{H}^{\infty}(\mathcal{D}_{d}, w)$. 
Based on \cite{bib:Sugihara_NearOpt_2003}, 
we adopt the minimum worst error 
$E_{n}^{\mathrm{min}}(\boldsymbol{H}^{\infty}(\mathcal{D}_{d}, w))$ given by
\begin{align}
& E_{n}^{\mathrm{min}}(\boldsymbol{H}^{\infty}(\mathcal{D}_{d}, w)) \notag \\ 
& :=
\inf_{1 \leq l \leq n} 
\inf_{\begin{subarray}{c} m_{1}, \ldots , m_{l} \\ m_{1}+\cdots+m_{l} = n \end{subarray}}
\inf_{\begin{subarray}{c} a_{j} \in \mathcal{D}_{d} \\ \text{distinct} \end{subarray}}
\inf_{\phi_{jk}}
\left[
\sup_{\| f \| \leq 1}
\sup_{x \in \mathbf{R}}
\left|
f(x) - \sum_{j = 1}^{l} \sum_{k = 0}^{m_{j} - 1} f^{(k)}(a_{j})\, \phi_{jk}(x)
\right|
\right],
\label{eq:def_E_min} 
\end{align}
where $\phi_{jk}$ are analytic functions on $\mathcal{D}_{d}$. 
We regard a formula that attains this value as optimal. 

We can provide some characterizations of  
$E_{n}^{\mathrm{min}}(\boldsymbol{H}^{\infty}(\mathcal{D}_{d}, w))$. 
To achieve this, 
for a mutually distinct $n$-sequence $a = \{ a_{j} \}_{j = 1}^{n} \subset \mathbf{R}$, 
we introduce the following functions%
\footnote{
The function given by~\eqref{eq:trans_B_prod} is called the transformed Blaschke product. }
\begin{align}
T_{d}(x) 
&= \tanh \left( \frac{\pi}{4d} x \right), 
\label{eq:tanh} \\
B_{n}(x; a, \mathcal{D}_{d}) 
& = \prod_{j = 1}^{n} \frac{T_{d}(x) - T_{d}(a_{j})}{1 - \overline{T_{d}(a_{j})} \, T_{d}(x)}, 
\label{eq:trans_B_prod} \\
B_{n;k}(x; a, \mathcal{D}_{d}) 
& = \prod_{\begin{subarray}{c} 1 \leq j \leq n, \\ j \neq k \end{subarray} } 
\frac{T_{d}(x) - T_{d}(a_{j})}{1 - \overline{T_{d}(a_{j})} \, T_{d}(x)}, 
\label{eq:trans_B_k_prod} 
\end{align}
and the $n$-point interpolation formula
\begin{align}
L_{n}[a; f](x) = 
\sum_{k=1}^{n} f(a_{k}) 
\frac{B_{n:k}(x; a, \mathcal{D}_{d}) \, w(x)}{B_{n:k}(a_{k}; a, \mathcal{D}_{d}) \, w(a_{k})}
\frac{T_{d}'(x - a_{k})}{T_{d}'(0)}. 
\label{eq:Interp_Op}
\end{align}
Then, we describe characterizations of $E_{n}^{\mathrm{min}}(\boldsymbol{H}^{\infty}(\mathcal{D}_{d}, w))$, 
including the expression in~\eqref{eq:CharMinErr_Intro}, 
by the following proposition. 

\begin{prop}[%
\ifthenelse{\value{journal} = 0}{%
{\cite[Lemma 4.3 and its proof]{bib:Sugihara_NearOpt_2003}}%
}{%else
{\citet[Lemma 4.3 and its proof]{bib:Sugihara_NearOpt_2003}}%
}]
\label{thm:CharMinErr}
Let $a = \{ a_{j} \}_{j = 1}^{n} \subset \mathbf{R}$ be a mutually distinct sequence. 
Then, we have the following error estimate of the formula in~\eqref{eq:Interp_Op}:
\begin{align}
E_{n}^{\mathrm{min}}(\boldsymbol{H}^{\infty}(\mathcal{D}_{d}, w)) 
& \leq
\sup_{
\begin{subarray}{c}
f \in \boldsymbol{H}^{\infty}(\mathcal{D}_{d}, w) \\
\| f \| \leq 1
\end{subarray}
} 
\left(
\sup_{x \in \mathbf{R}}
\left| 
f(x) - L_{n}[a; f](x) 
\right|
\right) 
\label{eq:CharApprox_Ineq_1st} \\
& \leq 
\sup_{x \in \mathbf{R}}
\left| 
B_{n}(x; a, \mathcal{D}_{d}) \, w(x)
\right|.
\label{eq:CharApprox_Ineq_2nd} 
\end{align}
Furthermore, if we take the infimum over all the $n$-sequences $a$ in the above inequalities,  then
each of them becomes an equality. 
\begin{align}
E_{n}^{\mathrm{min}}(\boldsymbol{H}^{\infty}(\mathcal{D}_{d}, w))
& = 
\inf_{a_{j} \in \mathbf{R}} 
\left[
\sup_{
\begin{subarray}{c}
f \in \boldsymbol{H}^{\infty}(\mathcal{D}_{d}, w) \label{eq:CharApprox_Eq} \\
\| f \| \leq 1
\end{subarray}
} 
\left(
\sup_{x \in \mathbf{R}}
\left| 
f(x) - L_{n}[a; f](x) 
\right|
\right)	
\right] \\
& = 
\inf_{a_{j} \in \mathbf{R}} 
\left[
\sup_{x \in \mathbf{R}}
\left| 
B_{n}(x; a, \mathcal{D}_{d}) \, w(x)
\right|
\right]. \label{eq:CharMinErr}
\end{align}
\end{prop}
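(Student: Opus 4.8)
The plan is to establish the proposition in three stages: the first inequality for a fixed real node sequence $a$, the second inequality, and finally the passage to equalities under $\inf_a$. The first inequality~\eqref{eq:CharApprox_Ineq_1st} is immediate. The formula $L_n[a;\cdot]$ in~\eqref{eq:Interp_Op} is exactly the admissible choice $l=n$, $m_1=\cdots=m_n=1$, nodes $a_j\in\mathbf{R}\subset\mathcal{D}_d$, with the analytic basis functions read off from~\eqref{eq:Interp_Op}. Since $E_n^{\mathrm{min}}(\boldsymbol{H}^{\infty}(\mathcal{D}_d,w))$ is defined as an infimum over all such data, it cannot exceed the worst-case error of this one formula, which is the content of~\eqref{eq:CharApprox_Ineq_1st}.

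For the main estimate~\eqref{eq:CharApprox_Ineq_2nd}, fix $f$ with $\|f\|\le 1$ and set
\[
F(z)=\frac{f(z)-L_n[a;f](z)}{w(z)\,B_n(z;a,\mathcal{D}_d)}.
\]
I would first note that $T_d$ is a conformal bijection of $\mathcal{D}_d$ onto the unit disk, so $z\mapsto T_d(z)-T_d(a_j)$ has a simple zero at $a_j$ and the denominators $1-\overline{T_d(a_j)}\,T_d(z)$ never vanish on $\mathcal{D}_d$; hence $L_n[a;f]$ is analytic there and interpolates $f$ at the $a_j$. Because $f-L_n[a;f]$ vanishes at the simple zeros of $B_n$ while $w$ is zero-free, the apparent singularities of $F$ at the $a_j$ are removable and $F$ is analytic on $\mathcal{D}_d$. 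Since $T_d$ sends $\operatorname{Im}z=\pm d$ onto the unit circle, one has $|B_n(z;a,\mathcal{D}_d)|=1$ on $\partial\mathcal{D}_d$. The crux is to show $\sup_{\mathcal{D}_d}|F|\le\|f\|$; I would obtain this by a Phragmén--Lindelöf argument on the strip, using $f,w\in B(\mathcal{D}_d)$ to supply the decay as $\operatorname{Re}z\to\pm\infty$ needed for the maximum principle on the unbounded domain, and controlling $L_n[a;f]/w$ on $\partial\mathcal{D}_d$ through the explicit normalization in~\eqref{eq:Interp_Op}. Granting this, evaluation on $\mathbf{R}$ gives $|f(x)-L_n[a;f](x)|=|F(x)|\,|w(x)B_n(x;a,\mathcal{D}_d)|\le\|f\|\,|w(x)B_n(x;a,\mathcal{D}_d)|$, and taking $\sup_x$ then $\sup_{\|f\|\le1}$ yields~\eqref{eq:CharApprox_Ineq_2nd}.

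The chain of inequalities already gives $E_n^{\mathrm{min}}(\boldsymbol{H}^{\infty}(\mathcal{D}_d,w))\le\inf_a\sup_x|B_n(x;a,\mathcal{D}_d)w(x)|$, so it remains to prove the reverse bound. Given an arbitrary admissible formula with data $(l,\{m_j\},\{a_j\subset\mathcal{D}_d\},\{\phi_{jk}\})$, I would build the extremal function $f_0=w\,B_a$, where
\[
B_a(z)=\prod_{j}\left(\frac{T_d(z)-T_d(a_j)}{1-\overline{T_d(a_j)}\,T_d(z)}\right)^{m_j}
\]
is the Blaschke product whose zeros match the nodes with their multiplicities. Since $|B_a|\le1$ on $\mathcal{D}_d$ we have $\|f_0\|\le1$, and because $f_0$ vanishes to order $m_j$ at each $a_j$, every datum $f_0^{(k)}(a_j)$ ($0\le k<m_j$) is zero, so the formula returns $0$ and its worst-case error is at least $\sup_x|f_0(x)|=\sup_x|w(x)B_a(x)|$. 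Taking the infimum over all formulas gives $E_n^{\mathrm{min}}(\boldsymbol{H}^{\infty}(\mathcal{D}_d,w))\ge\inf_{\mathrm{configs}}\sup_x|w\,B|$, the infimum over all zero configurations in $\mathcal{D}_d$ of total multiplicity $n$. The remaining step is to show this equals the infimum over real simple nodes, which then sandwiches~\eqref{eq:CharApprox_Ineq_1st}--\eqref{eq:CharApprox_Ineq_2nd} into the asserted equalities.

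I expect two points to carry the real difficulty. The first is the bound $\sup_{\mathcal{D}_d}|F|\le\|f\|$: the strip is unbounded, so the maximum principle must be upgraded to Phragmén--Lindelöf, and one must verify that $L_n[a;f]/w$ is genuinely dominated by $\|f\|$ on $\partial\mathcal{D}_d$ rather than merely bounded, which is where the precise form of~\eqref{eq:Interp_Op} is essential. The second, and I think the genuinely hard, step is the reduction of $\inf_{\mathrm{configs}}\sup_x|w\,B|$ to real simple nodes: a priori a complex node or a node of multiplicity larger than one might shrink $\sup_x|w\,B|$, so one must prove by a symmetrization and splitting argument that neither improves on the best real simple configuration. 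This optimality statement is exactly where the geometry of $\mathcal{D}_d$ and the positivity of $w$ on $\mathbf{R}$ from Assumption~\ref{assump:w} enter, and it is the part I would expect to demand the most care.
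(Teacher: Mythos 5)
First, a point of reference: the paper does not prove this proposition at all --- it is imported verbatim from Sugihara's Lemma~4.3 together with its proof, so your attempt has to be measured against that source. Your handling of \eqref{eq:CharApprox_Ineq_1st} is correct and standard ($L_{n}[a;\cdot]$ is one admissible formula, with analytic $\phi_{k}$ since the $a_{j}$ are real), and your lower-bound construction $f_{0}=w\,B_{a}$ is exactly the right extremal function for showing $E_{n}^{\mathrm{min}}\ge\inf\sup_{x}|wB|$. You correctly flag that passing from arbitrary multiplicities and complex nodes in $\mathcal{D}_{d}$ to real simple nodes still requires a monotonicity/symmetrization argument for the Green function plus a continuity argument to split multiple nodes; that part is honestly left open rather than wrong.

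The genuine gap is in your argument for \eqref{eq:CharApprox_Ineq_2nd}. A Phragm\'en--Lindel\"of bound $\sup_{\mathcal{D}_{d}}|F|\le\|f\|$ for $F=(f-L_{n}[a;f])/(wB_{n})$ needs the pointwise estimate $|F|\le\|f\|$ on $\partial\mathcal{D}_{d}$, and that estimate is simply not there: on the boundary $|f/(wB_{n})|\le\|f\|$ holds, but the second term equals $\sum_{k}\lambda_{k}\,(f(a_{k})/w(a_{k}))/S_{d}(\zeta-a_{k})$ in the notation of \eqref{eq:FirstBary}, which the triangle inequality only controls by $2\|f\|\sum_{k}|\lambda_{k}|$ with each $|\lambda_{k}|\ge1$. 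So your route yields at best a constant $C>1$ in front of $\sup_{x}|B_{n}w|$, and with $C>1$ the chain no longer sandwiches against the lower bound, so the equalities \eqref{eq:CharApprox_Eq}--\eqref{eq:CharMinErr} would not follow. The step that actually works is a residue computation, not a maximum principle: for $x\in\mathbf{R}$ the function $\zeta\mapsto\frac{f(\zeta)}{w(\zeta)B_{n}(\zeta)}\cdot\frac{2}{\sinh(\frac{\pi}{2d}(\zeta-x))}$ has only the simple poles $\zeta=x,a_{1},\dots,a_{n}$ in $\mathcal{D}_{d}$, and collecting residues gives the exact identity
\[
f(x)-L_{n}[a;f](x)
=\frac{T_{d}'(0)\,w(x)B_{n}(x;a,\mathcal{D}_{d})}{2\pi \ii}
\int_{\partial\mathcal{D}_{d}}\frac{f(\zeta)}{w(\zeta)B_{n}(\zeta;a,\mathcal{D}_{d})}\,
\frac{2\,\dd\zeta}{\sinh\left(\frac{\pi}{2d}(\zeta-x)\right)},
\]
the vertical parts of the contour being discarded thanks to $w\in B(\mathcal{D}_{d})$ and $\|f\|<\infty$. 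On $\partial\mathcal{D}_{d}$ one has $|B_{n}(\zeta)|=1$, $|f(\zeta)/w(\zeta)|\le\|f\|$, and $\left|\sinh\left(\frac{\pi}{2d}(\zeta-x)\right)\right|=\cosh\left(\frac{\pi}{2d}\,\mathrm{Re}(\zeta-x)\right)$, and the kernel has total mass exactly one: $\frac{T_{d}'(0)}{2\pi}\cdot2\cdot2\int_{-\infty}^{\infty}\mathrm{sech}\left(\frac{\pi}{2d}t\right)\dd t=\frac{\pi/(4d)}{2\pi}\cdot 8d=1$. This is precisely where the constant $1$ in \eqref{eq:CharApprox_Ineq_2nd} comes from; without this integral representation your plan cannot close.
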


\noindent
Proposition~\ref{thm:CharMinErr} indicates that 
the interpolation formula $L_{n}[a; f](x)$
provides an explicit form of an optimal approximation formula 
if there exists an $n$-sequence $a = a^{\ast}$ that attains the infimum in~\eqref{eq:CharMinErr}. 
Since
\[
\frac{T_{d}(x) - T_{d}(a_{j})}{1 - \overline{T_{d}(a_{j})} \, T_{d}(x)} = T_{d}(x - a_{j})
\]
for $a_{j} \in \mathbf{R}$ and $x \in \mathbf{R}$, 
the expression in \eqref{eq:CharMinErr} can be rewritten in the form
\begin{align}
\inf_{a_{j} \in \mathbf{R}} 
\left[
\sup_{x \in \mathbf{R}}
\left| 
\left(
\prod_{j=1}^{n} T_{d}(x - a_{j})
\right) 
w(x)
\right|
\right]. \notag
\end{align}
Therefore,
as far as $a = \{ a_{j} \}_{j=1}^{n}$ is concerned, 
we can consider the following equivalent alternative: 
\begin{align}
\inf_{a_{j} \in \mathbf{R}} 
\left[
\sup_{x \in \mathbf{R}}
\left(
\sum_{j=1}^{n} \log |T_{d}(x - a_{j})|
+ \log w(x)
\right)
\right].
\label{eq:CharMinErr_rew}
\end{align}

To deal with the optimization problem corresponding to~\eqref{eq:CharMinErr_rew}, 
we introduce the following notation:
\begin{align}
& K(x)  = - \log |T_{d}(x)| 
\quad \left( = -\log \left| \tanh \left( \frac{\pi}{4d} x \right) \right| \right), 
\label{eq:SettingK} \\
& Q(x)  = - \log w(x). 
\label{eq:SettingQ} 
\end{align}
Furthermore, for an integer $n \geq 2$, let 
\begin{align}
\mathcal{R}_{n} = 
\{ (a_{1}, \ldots , a_{n}) \in \mathbf{R}^{n} \mid a_{1} < \cdots < a_{n} \}
\label{eq:ordered_n_points}
\end{align}
be the set of mutually distinct $n$-point configurations in $\mathbf{R}$. 
Then,  by using the function defined by
\begin{align}
& U_{n}^{\mathrm{D}}(a;x) = \sum_{i=1}^{n} K(x - a_{i}), \qquad x \in \mathbf{R}
\label{eq:def_D_pot}
\end{align}  
for $a = (a_{1}, \ldots , a_{n}) \in \mathcal{R}_{n}$,
we can formulate the optimization problem corresponding to~\eqref{eq:CharMinErr_rew}
as follows: 
\begin{align}
(\mathrm{D})
\qquad 
\text{maximize} 
\quad 
\inf_{x \in \mathbf{R}} \left( U_{n}^{\mathrm{D}}(a;x) + Q(x) \right)
\quad
\text{subject to} 
\quad 
a \in \mathcal{R}_{n}. 
\label{eq:OptPot}
\end{align}

Problem (D) in~\eqref{eq:OptPot} is closely related to potential theory. 
In fact, function $K(x - y)$ of $x,y \in \mathbf{R}^{2}$ 
is the kernel function derived from the Green function of $\mathcal{D}_{d}$: 
\begin{align}
g_{\mathcal{D}_{d}}(z_{1},z_{2}) = - \log \left| \frac{T_{d}(z_{1}) - T_{d}(z_{2})}{1 - \overline{T_{d}(z_{2})} \, T_{d}(z_{1})} \right|
\label{eq:Green_D_d}
\end{align}
in the special case that $(z_{1}, z_{2}) = (x,y) \in \mathbf{R}^{2}$. 
Therefore, the function $U_{n}^{\mathrm{D}}(a;x)$ is the Green potential 
for the discrete measure $\sum_{i=1}^{n} \delta_{a_{i}}$, 
where $\delta_{a_{i}}$ is the Dirac measure centered at $a_{i}$. 
Because some fundamental results about the Green potential
can be used as good references to deal with Problem (D), 
we describe them below in Section~\ref{sec:PotentialTheory}. 

%-----
\subsection{Fundamentals of potential theory}
\label{sec:PotentialTheory}

For a positive integer $n$, let $\mathcal{M}(\mathbf{R}, n)$ 
be the set of all Borel measures $\mu$ on $\mathbf{R}$ with $\mu(\mathbf{R}) = n$, 
and let $\mathcal{M}_{\mathrm{c}}(\mathbf{R}, n)$ be the set of measures 
$\mu \in \mathcal{M}(\mathbf{R}, n)$ with a compact support. 
In particular, 
for a sequence $a \in \mathcal{R}_{n}$, 
the discrete measure $\sum_{i=1}^{n} \delta_{a_{i}}$
belongs to $\mathcal{M}_{\mathrm{c}}(\mathbf{R}, n)$. 
For $\mu \in \mathcal{M}(\mathbf{R}, n)$, 
we define potential $U_{n}^{\mathrm{C}}(\mu; x)$ and energy $I_{n}^{\mathrm{C}}(\mu)$ by
\begin{align}
& U_{n}^{\mathrm{C}}(\mu; x)
= 
\int_{\mathbf{R}} K(x - y) \, \mathrm{d}\mu(y), 
\label{eq:def_C_pot} \\
& I_{n}^{\mathrm{C}}(\mu)
= 
\int_{\mathbf{R}} \int_{\mathbf{R}} K(x - y) \, \mathrm{d}\mu(y) \mathrm{d}\mu(x)
+ 2 \int_{\mathbf{R}} Q(x) \, \mathrm{d}\mu(x), 
\label{eq:def_C_ene} 
\end{align}
respectively. 
According to \eqref{eq:SettingK} and \eqref{eq:Green_D_d}, 
these are the Green potential and energy 
in the case that the domain of the Green function is $\mathcal{D}_{d}$
and that of the external field $Q$ is $\mathbf{R}$. 
By using standard techniques in potential theory, 
we can show the following fundamental theorems. 

\begin{thm}
\label{thm:Green_basic}
On Assumptions~\ref{assump:w} and~\ref{assump:w_convex}, the following hold true: 
\begin{enumerate}
\item
The energy $I_{n}^{\mathrm{C}}(\mu)$ has a unique minimizer $\mu_{n}^{\ast} \in \mathcal{M}(\mathbf{R}, n)$
with $I_{n}^{\mathrm{C}}(\mu_{n}^{\ast}) < \infty$. 
Moreover, $\mu_{n}^{\ast}$ has finite energy: 
\[
\int_{\mathbf{R}} U_{n}^{\mathrm{C}}(\mu_{n}^{\ast}; x) \, \mathrm{d}\mu_{n}^{\ast}(x) < \infty. 
\]

\item
The support $\mathop{\mathrm{supp}} \mu_{n}^{\ast}$ is the compact subset of $\mathbf{R}$, 
i.e., $\mu_{n}^{\ast} \in \mathcal{M}_{\mathrm{c}}(\mathbf{R}, n)$. 
More precisely, 
$\mathop{\mathrm{supp}} \mu_{n}^{\ast} \subset \{ x \in \mathbf{R} \mid Q(x) \leq N_{n} \}$ 
holds true for some $N_{n}$. 

\item
Let constant $F_{K,Q}^{\mathrm{C}}(n)$ be defined by
\begin{align}
F_{K,Q}^{\mathrm{C}}(n)
=
I_{n}^{\mathrm{C}}(\mu_{n}^{\ast}) - \int_{\mathbf{R}} Q(x) \, \mathrm{d}\mu_{n}^{\ast}(x). 
\label{eq:def_FC}
\end{align}
Then, we have
\begin{align}
& U_{n}^{\mathrm{C}}(\mu_{n}^{\ast}; x) + Q(x) \geq \frac{F_{K,Q}^{\mathrm{C}}(n)}{n} \qquad \forall x \in \mathbf{R}, 
\label{eq:pot_geq_F} \\
& U_{n}^{\mathrm{C}}(\mu_{n}^{\ast}; x) + Q(x) = \frac{F_{K,Q}^{\mathrm{C}}(n)}{n} \qquad \forall x \in \mathop{\mathrm{supp}} \mu_{n}^{\ast}. 
\label{eq:pot_eq_F} 
\end{align}
\end{enumerate}
\end{thm}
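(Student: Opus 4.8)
The plan is to treat Theorem~\ref{thm:Green_basic} as the Green-energy instance of the classical Frostman theorem in weighted potential theory, and to verify that the kernel $K$ and the external field $Q$ satisfy the hypotheses needed to run the standard direct-method and first-variation arguments. First I would record the structural properties that drive everything. From $T_{d}(x)=\tanh(\tfrac{\pi}{4d}x)$ one has $K(x)=-\log|T_{d}(x)|\ge 0$ on $\mathbf{R}$, with a single logarithmic singularity $K(x)\sim-\log|x|$ as $x\to 0$, with $K(x)\to 0$ as $|x|\to\infty$, and with $K(x-y)=K(y-x)$ lower semicontinuous; moreover $K$ is the restriction of the Green kernel $g_{\mathcal{D}_{d}}$ to $\mathbf{R}\times\mathbf{R}$, so it is strictly positive definite (equivalently, its Fourier transform is nonnegative), which is the mechanism behind uniqueness. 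On the side of $Q=-\log w$, Assumption~\ref{assump:w} gives $Q\ge 0$ together with continuity (indeed analyticity) of $Q$ and with $w(x)\to 0$, hence $Q(x)\to+\infty$, as $|x|\to\infty$ since $w\in B(\mathcal{D}_{d})$; Assumption~\ref{assump:w_convex} gives strict convexity of $Q$. These are exactly the admissibility conditions for the Green energy.

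For part~(1) I would prove existence by the direct method. Since $K$ and $Q$ are bounded below and lower semicontinuous, $\mu\mapsto I_{n}^{\mathrm{C}}(\mu)$ is lower semicontinuous for the weak-$*$ topology. The growth $Q(x)\to+\infty$ forces every minimizing sequence to be tight (mass escaping to infinity makes $\int Q\,\dd\mu$ blow up while $\iint K\,\dd\mu\,\dd\mu\ge 0$), so a weak-$*$ limit exists in $\mathcal{M}(\mathbf{R},n)$ and attains the infimum by lower semicontinuity. Finiteness of the minimal energy follows by exhibiting one trial measure of mass $n$ with smooth compactly supported density, whose energy is finite because the logarithmic singularity of $K$ is integrable and $Q$ is bounded on compacta; the stated bound $\int U_{n}^{\mathrm{C}}(\mu_{n}^{\ast};x)\,\dd\mu_{n}^{\ast}(x)<\infty$ is then immediate, since $\iint K\,\dd\mu_{n}^{\ast}\dd\mu_{n}^{\ast}=I_{n}^{\mathrm{C}}(\mu_{n}^{\ast})-2\int Q\,\dd\mu_{n}^{\ast}\le I_{n}^{\mathrm{C}}(\mu_{n}^{\ast})<\infty$. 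Uniqueness I would obtain from strict positive definiteness: if $\mu_{1},\mu_{2}$ both minimize, then $\nu=\mu_{1}-\mu_{2}$ has total mass zero and finite energy, and the parallelogram-type identity
\begin{align}
I_{n}^{\mathrm{C}}\!\left(\tfrac{\mu_{1}+\mu_{2}}{2}\right)
= \tfrac12 I_{n}^{\mathrm{C}}(\mu_{1})+\tfrac12 I_{n}^{\mathrm{C}}(\mu_{2})
-\tfrac14\iint K(x-y)\,\dd\nu(y)\dd\nu(x)
\notag
\end{align}
forces $\iint K\,\dd\nu\,\dd\nu=0$, whence $\nu=0$.

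The heart of the argument, and parts~(2)--(3), I would derive from a first-variation computation. For any competitor $\sigma\in\mathcal{M}(\mathbf{R},n)$ of finite energy, the convex combinations $\mu_{t}=(1-t)\mu_{n}^{\ast}+t\sigma$ stay in $\mathcal{M}(\mathbf{R},n)$, and minimality gives $\tfrac{\dd}{\dd t}I_{n}^{\mathrm{C}}(\mu_{t})\big|_{t=0^{+}}\ge 0$, which simplifies (using symmetry of $K$) to $\int\bigl(U_{n}^{\mathrm{C}}(\mu_{n}^{\ast};x)+Q(x)\bigr)\dd\sigma(x)\ge F_{K,Q}^{\mathrm{C}}(n)$, where I would first check the bookkeeping identity $\int(U_{n}^{\mathrm{C}}(\mu_{n}^{\ast};\cdot)+Q)\,\dd\mu_{n}^{\ast}=F_{K,Q}^{\mathrm{C}}(n)$. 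Since this holds for all finite-energy $\sigma$ of mass $n$, the inequality $U_{n}^{\mathrm{C}}(\mu_{n}^{\ast};x)+Q(x)\ge F_{K,Q}^{\mathrm{C}}(n)/n$ holds quasi-everywhere, and I would upgrade it to every $x\in\mathbf{R}$ (equation~\eqref{eq:pot_geq_F}) using lower semicontinuity of the potential together with continuity of $Q$. Feeding this back into $\int(U_{n}^{\mathrm{C}}(\mu_{n}^{\ast};\cdot)+Q)\,\dd\mu_{n}^{\ast}=\tfrac{F_{K,Q}^{\mathrm{C}}(n)}{n}\,\mu_{n}^{\ast}(\mathbf{R})$ forces equality $\mu_{n}^{\ast}$-almost everywhere, and lower semicontinuity then propagates it to all of $\mathop{\mathrm{supp}}\mu_{n}^{\ast}$, giving~\eqref{eq:pot_eq_F}. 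Part~(2) then drops out: on the support $Q(x)=F_{K,Q}^{\mathrm{C}}(n)/n-U_{n}^{\mathrm{C}}(\mu_{n}^{\ast};x)\le F_{K,Q}^{\mathrm{C}}(n)/n=:N_{n}$ because $U_{n}^{\mathrm{C}}\ge 0$, so $\mathop{\mathrm{supp}}\mu_{n}^{\ast}\subset\{x\in\mathbf{R}\mid Q(x)\le N_{n}\}$, a bounded set by the growth of $Q$; being also closed, the support is compact.

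I expect the main obstacle to be the passage from the almost-everywhere and quasi-everywhere statements to the pointwise-everywhere conclusions in~\eqref{eq:pot_geq_F}--\eqref{eq:pot_eq_F}. Point masses are inadmissible test measures, since $K(0)=+\infty$ makes their energy infinite, so the variational inequality only yields information modulo polar sets, and closing this gap requires the lower semicontinuity of $U_{n}^{\mathrm{C}}$ and, for equality on the whole support, control of the exceptional polar set --- ideally via continuity of $U_{n}^{\mathrm{C}}(\mu_{n}^{\ast};\cdot)$, which follows from regularity of $\mu_{n}^{\ast}$ (a bounded density, consistent with the strict convexity of $Q$). Establishing strict positive definiteness of the Green kernel on $\mathbf{R}$ and the tightness afforded by the growth of $Q$ are the other two points that need genuine, though standard, justification.
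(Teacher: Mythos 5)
Your proposal is essentially correct, but it takes a genuinely different route from the paper: the paper does not prove this theorem from first principles at all --- it observes that with $G=\mathcal{D}_d$, $E=\mathbf{R}$ and $Q=-\log w$ the external field is admissible (in particular $Q(x)\to\infty$ as $x\to\pm\infty$ by Assumption~\ref{assump:w}) and then simply invokes Theorems 2.1 and 2.2 of Levin and Lubinsky on Green equilibrium measures. What you have written is, in effect, a reconstruction of the proof behind that citation: the direct method with tightness from the growth of $Q$ for existence, the energy principle (strict positive definiteness of the Green kernel on signed measures of zero total mass) for uniqueness via the parallelogram identity, and the first-variation/Frostman argument for \eqref{eq:pot_geq_F}--\eqref{eq:pot_eq_F}, with part (2) deduced from part (3) via $U_{n}^{\mathrm{C}}\geq 0$ and the boundedness of the sublevel sets of $Q$. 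All of these steps check out, and the upgrade from quasi-everywhere to everywhere that you flag as the main obstacle is handled exactly as you indicate: the set where \eqref{eq:pot_geq_F} fails is open (by lower semicontinuity of $U_{n}^{\mathrm{C}}$ and continuity of $Q$) and polar, hence empty. One small remark: for \eqref{eq:pot_eq_F} you do not need continuity of $U_{n}^{\mathrm{C}}(\mu_{n}^{\ast};\cdot)$ or regularity of $\mu_{n}^{\ast}$; the $\mu_{n}^{\ast}$-a.e.\ equality gives a dense subset of the support on which $U_{n}^{\mathrm{C}}+Q=F_{K,Q}^{\mathrm{C}}(n)/n$, lower semicontinuity yields $\leq$ at every point of the support, and \eqref{eq:pot_geq_F} supplies the reverse inequality. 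The trade-off between the two approaches is the usual one: the paper's citation is short and defers all the potential-theoretic machinery (capacity, the energy principle for Green kernels) to the reference, while your self-contained argument makes explicit which structural facts about $K$ and $Q$ are actually used, at the cost of having to justify the energy principle for the strip's Green kernel --- which, as you note, can be done either by conformal transplantation to the disk via $T_d$ or by the nonnegativity of the Fourier transform of $-\log|\tanh(\pi x/(4d))|$.
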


\begin{proof}
This theorem is a specialized version of Theorems 2.1 and 2.2 in \cite{bib:LevinLubinsky_2001}. 
In fact, if we set 
$G = \mathcal{D}_{d}$, $E = \mathbf{R}$ and $Q(x) = - \log w(x)$, 
then $Q$ is admissible on $\mathbf{R}$ and the assumptions of these theorems are satisfied. 
In particular, the assertion $\displaystyle \lim_{x \to \pm \infty, \, x \in \mathbf{R}} Q(x) = \infty$
holds true owing to Assumption~\ref{assump:w}.  
Therefore, the proof of this theorem is straightforward and omitted here. 
\end{proof}

\begin{thm}
\label{thm:PotUpBoundFC}
On Assumptions \ref{assump:w} and \ref{assump:w_convex}, 
for any $\mu \in \mathcal{M}_{\mathrm{c}}(\mathbf{R}, n)$, 
there exists $x \in \mathbf{R}$ such that
\begin{align}
U_{n}^{\mathrm{C}}(\mu; x) + Q(x) \leq \frac{F_{K,Q}^{\mathrm{C}}(n)}{n}.
\label{eq:any_mu_leq_F}
\end{align}
\end{thm}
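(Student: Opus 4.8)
The plan is to argue by contradiction, assuming that $U_{n}^{\mathrm{C}}(\mu; x) + Q(x) > F_{K,Q}^{\mathrm{C}}(n)/n$ holds for every $x \in \mathbf{R}$, and to reduce the claim to a comparison of two Green potentials on the support of the equilibrium measure $\mu_{n}^{\ast}$ from Theorem~\ref{thm:Green_basic}. First I would restrict the assumed strict inequality to $\mathop{\mathrm{supp}} \mu_{n}^{\ast}$ and subtract the identity \eqref{eq:pot_eq_F}, namely $U_{n}^{\mathrm{C}}(\mu_{n}^{\ast}; x) + Q(x) = F_{K,Q}^{\mathrm{C}}(n)/n$ there; the external field $Q$ cancels and one is left with $U_{n}^{\mathrm{C}}(\mu_{n}^{\ast}; x) < U_{n}^{\mathrm{C}}(\mu; x)$ for all $x \in \mathop{\mathrm{supp}} \mu_{n}^{\ast}$. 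Thus the task becomes purely one about the kernel $K$: to show that the potential of $\mu$ cannot strictly dominate that of $\mu_{n}^{\ast}$ at every point of $\mathop{\mathrm{supp}} \mu_{n}^{\ast}$.

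Next I would globalize this comparison by the domination (complete maximum) principle. Since $K$ in \eqref{eq:SettingK} is the Green kernel of $\mathcal{D}_{d}$ arising from \eqref{eq:Green_D_d}, and since $\mu_{n}^{\ast}$ has finite energy and compact support by parts~1 and~2 of Theorem~\ref{thm:Green_basic}, the inequality $U_{n}^{\mathrm{C}}(\mu_{n}^{\ast}; \cdot) \le U_{n}^{\mathrm{C}}(\mu; \cdot)$ on $\mathop{\mathrm{supp}} \mu_{n}^{\ast}$ propagates to all of $\mathbf{R}$. Writing $v := U_{n}^{\mathrm{C}}(\mu; \cdot) - U_{n}^{\mathrm{C}}(\mu_{n}^{\ast}; \cdot)$, this yields $v \ge 0$ on $\mathbf{R}$.

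The contradiction then comes from a global mass balance exploiting the translation invariance of $K$. Because $\tanh(\pi x/(4d))$ tends to $\pm 1$ exponentially, the function $K(x) = -\log|\tanh(\pi x/(4d))|$ has an integrable logarithmic singularity at the origin and decays exponentially at $\pm\infty$, so $c_{d} := \int_{\mathbf{R}} K(t)\, \dd t$ is finite and, by a change of variables, $\int_{\mathbf{R}} K(x - y)\, \dd x = c_{d}$ for every $y$. Applying Tonelli's theorem to the nonnegative integrand gives $\int_{\mathbf{R}} U_{n}^{\mathrm{C}}(\mu; x)\, \dd x = c_{d}\, \mu(\mathbf{R}) = c_{d} n = c_{d}\, \mu_{n}^{\ast}(\mathbf{R}) = \int_{\mathbf{R}} U_{n}^{\mathrm{C}}(\mu_{n}^{\ast}; x)\, \dd x$, since both measures have total mass $n$. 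Hence $\int_{\mathbf{R}} v\, \dd x = 0$, and together with $v \ge 0$ this forces $v = 0$ almost everywhere, i.e. $U_{n}^{\mathrm{C}}(\mu; \cdot) = U_{n}^{\mathrm{C}}(\mu_{n}^{\ast}; \cdot)$ a.e. By the uniqueness theorem for Green potentials (a measure is determined by its potential), $\mu = \mu_{n}^{\ast}$; but then the standing assumption reads $U_{n}^{\mathrm{C}}(\mu_{n}^{\ast}; x) + Q(x) > F_{K,Q}^{\mathrm{C}}(n)/n$ for all $x$, contradicting \eqref{eq:pot_eq_F} on the nonempty set $\mathop{\mathrm{supp}} \mu_{n}^{\ast}$.

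The main obstacle I anticipate is the rigorous invocation of the domination principle: one must confirm that the Green kernel of the strip satisfies the complete maximum principle and that the finite-energy and compact-support properties of $\mu_{n}^{\ast}$ are precisely the hypotheses needed, since without this global step the contradiction hypothesis controls $v$ only on $\mathop{\mathrm{supp}} \mu_{n}^{\ast}$. A secondary technical point is the passage from $v = 0$ a.e. to $\mu = \mu_{n}^{\ast}$, for which the lower semicontinuity of the potentials and the standard uniqueness theorem must be cited with care; the remaining estimates (finiteness of $c_{d}$ and the use of Tonelli's theorem) are routine consequences of the explicit form of $K$.
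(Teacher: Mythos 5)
Your first two steps coincide with the paper's: negate the conclusion, restrict to $\mathop{\mathrm{supp}}\mu_{n}^{\ast}$, use the Frostman equality \eqref{eq:pot_eq_F} to cancel $Q$, and invoke the principle of domination (with the finite energy and compact support of $\mu_{n}^{\ast}$) to propagate $U_{n}^{\mathrm{C}}(\mu_{n}^{\ast};\cdot)\le U_{n}^{\mathrm{C}}(\mu;\cdot)$ beyond the support. Where you diverge is the finish. The paper extends the inequality to all of $\mathcal{D}_{d}$ and lets $z\to\partial\mathcal{D}_{d}$, where Green potentials of compactly supported measures vanish; this immediately yields $L\le F_{K,Q}^{\mathrm{C}}(n)/n$ for any lower bound $L$ of $U_{n}^{\mathrm{C}}(\mu;\cdot)+Q$, with no integrability or uniqueness considerations. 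Your finish instead integrates $v:=U_{n}^{\mathrm{C}}(\mu;\cdot)-U_{n}^{\mathrm{C}}(\mu_{n}^{\ast};\cdot)$ over $\mathbf{R}$ and exploits $\int_{\mathbf{R}}K(x-y)\,\dd x=c_{d}<\infty$; the mass-balance computation $\int_{\mathbf{R}}v\,\dd x=0$ is correct and is a genuinely different use of the translation invariance and integrability of $K$, which the boundary-limit argument never needs.

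The gap is in the last link. From $v\ge 0$ and $\int_{\mathbf{R}}v\,\dd x=0$ you get $v=0$ only Lebesgue-a.e.\ on the real line, and the ``uniqueness theorem for Green potentials'' you then invoke is not stated for potentials agreeing merely a.e.\ with respect to one-dimensional Lebesgue measure: the standard unicity results require agreement quasi-everywhere, or a.e.\ with respect to planar measure on an open subset of the domain, since they recover the measure from the distributional Laplacian of its potential. As written, $\mu=\mu_{n}^{\ast}$ does not follow. Two repairs are available. (i) Avoid uniqueness entirely: by the contradiction hypothesis you have the \emph{strict} inequality $v>0$ on $\mathop{\mathrm{supp}}\mu_{n}^{\ast}$; since $U_{n}^{\mathrm{C}}(\mu_{n}^{\ast};\cdot)$ equals the continuous function $F_{K,Q}^{\mathrm{C}}(n)/n-Q$ on its support and is hence continuous everywhere by the continuity principle, while $U_{n}^{\mathrm{C}}(\mu;\cdot)$ is lower semicontinuous, $v$ is lower semicontinuous, so $\{v>0\}$ is open and contains the nonempty compact set $\mathop{\mathrm{supp}}\mu_{n}^{\ast}$; it therefore has positive Lebesgue measure, giving $\int_{\mathbf{R}}v\,\dd x>0$, the desired contradiction. (ii) Keep your route but prove uniqueness directly: both measures live on $\mathbf{R}$, $K\in L^{1}(\mathbf{R})$ has strictly positive Fourier transform, and $K\ast\mu=K\ast\mu_{n}^{\ast}$ a.e.\ with both sides in $L^{1}(\mathbf{R})$, whence $\widehat{\mu}=\widehat{\mu_{n}^{\ast}}$. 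Either patch completes the argument, though the paper's boundary-limit step reaches the same conclusion in one line.
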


\begin{proof}
Because this theorem is an analogue of the first half of Theorem I.3.1 in \cite{bib:SaffTotik_LogPotExtField_1997}, 
this proof is basically similar to that of the same theorem. 
Suppose that 
\[
U_{n}^{\mathrm{C}}(\mu; x) + Q(x) \geq L 
\qquad 
\text{for any } x \in \mathbf{R}
\]
holds for some $L$. 
Then, by Inequality~\eqref{eq:pot_eq_F} in Theorem~\ref{thm:Green_basic}, we have
\begin{align}
& U_{n}^{\mathrm{C}}(\mu; x) - U_{n}^{\mathrm{C}}(\mu^{\ast}; x) \geq L - \frac{F_{K,Q}^{\mathrm{C}}(n)}{n} \notag \\
& \iff 
U_{n}^{\mathrm{C}}(\mu; x) \geq U_{n}^{\mathrm{C}}(\mu^{\ast}; x) + L - \frac{F_{K,Q}^{\mathrm{C}}(n)}{n} 
\label{eq:for_applying_POD}
\end{align}
for any $x \in \mathop{\mathrm{supp}} \mu^{\ast}$. 
Then, by the principle of domination,  
Equality~\eqref{eq:for_applying_POD} holds for all $z \in \mathcal{D}_{d}$. 
By letting $z \to z_{0} \in \partial \mathcal{D}_{d}$, we have
\[
\frac{F_{K,Q}^{\mathrm{C}}(n)}{n} \geq L. 
\]
Therefore, 
there exists $x \in \mathbf{R}$ such that 
\[
U_{n}^{\mathrm{C}}(\mu; x) + Q(x) \leq \frac{F_{K,Q}^{\mathrm{C}}(n)}{n}, 
\]
which proves the theorem. 
\end{proof}

Then, 
according to Inequalities~\eqref{eq:pot_geq_F} and~\eqref{eq:any_mu_leq_F}, 
we can obtain the following theorem. 

\begin{thm}
\label{thm:equi_meas_opt}
On Assumptions \ref{assump:w} and \ref{assump:w_convex}, 
the minimizer $\mu_{n}^{\ast}$ of $I_{n}^{\mathrm{C}}$ yields a solution of the optimization problem
\begin{align}
(\mathrm{C})
\qquad 
\text{maximize}
\quad 
\inf_{x \in \mathbf{R}} (U_{n}^{\mathrm{C}}(\mu; x) + Q(x)) 
\quad
\text{subject to}
\quad 
\mu \in \mathcal{M}_{\mathrm{c}}(\mathbf{R}, n).
\label{eq:OptPotCont}
\end{align}
\end{thm}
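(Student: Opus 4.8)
The plan is to combine the lower bound from Theorem~\ref{thm:Green_basic} with the upper bound from Theorem~\ref{thm:PotUpBoundFC}, treating $F_{K,Q}^{\mathrm{C}}(n)/n$ as the common threshold that pins down the optimal value of Problem (C). Essentially all of the analytic work has already been done in those two theorems, so this proof should amount to an assembly step.

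First I would verify that $\mu_{n}^{\ast}$ is feasible for (C): by part~2 of Theorem~\ref{thm:Green_basic}, $\mu_{n}^{\ast} \in \mathcal{M}_{\mathrm{c}}(\mathbf{R}, n)$, so it is an admissible competitor. Next I would pin down the objective value at $\mu_{n}^{\ast}$. Inequality~\eqref{eq:pot_geq_F} gives $U_{n}^{\mathrm{C}}(\mu_{n}^{\ast}; x) + Q(x) \geq F_{K,Q}^{\mathrm{C}}(n)/n$ for every $x \in \mathbf{R}$, so taking the infimum over $x$ yields the bound $\inf_{x} (U_{n}^{\mathrm{C}}(\mu_{n}^{\ast}; x) + Q(x)) \geq F_{K,Q}^{\mathrm{C}}(n)/n$. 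On the other hand, Equality~\eqref{eq:pot_eq_F} shows that the value $F_{K,Q}^{\mathrm{C}}(n)/n$ is actually attained at every point of $\mathop{\mathrm{supp}} \mu_{n}^{\ast}$, which is nonempty since $\mu_{n}^{\ast}(\mathbf{R}) = n > 0$; hence the infimum cannot exceed $F_{K,Q}^{\mathrm{C}}(n)/n$. Combining the two bounds gives the exact value
\[
\inf_{x \in \mathbf{R}} (U_{n}^{\mathrm{C}}(\mu_{n}^{\ast}; x) + Q(x)) = \frac{F_{K,Q}^{\mathrm{C}}(n)}{n}.
\]

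Then I would show this value is maximal. Let $\mu \in \mathcal{M}_{\mathrm{c}}(\mathbf{R}, n)$ be arbitrary. By Theorem~\ref{thm:PotUpBoundFC}, there is a point $x_{0} \in \mathbf{R}$ with $U_{n}^{\mathrm{C}}(\mu; x_{0}) + Q(x_{0}) \leq F_{K,Q}^{\mathrm{C}}(n)/n$, so trivially $\inf_{x} (U_{n}^{\mathrm{C}}(\mu; x) + Q(x)) \leq F_{K,Q}^{\mathrm{C}}(n)/n$. Comparing with the exact value computed above, every feasible $\mu$ satisfies
\[
\inf_{x \in \mathbf{R}} (U_{n}^{\mathrm{C}}(\mu; x) + Q(x)) \leq \inf_{x \in \mathbf{R}} (U_{n}^{\mathrm{C}}(\mu_{n}^{\ast}; x) + Q(x)),
\]
which is precisely the statement that $\mu_{n}^{\ast}$ solves (C).

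Since the substantive content is already absorbed into Theorems~\ref{thm:Green_basic} and~\ref{thm:PotUpBoundFC}, the only genuine care needed is in the first step: one must invoke the equality~\eqref{eq:pot_eq_F} on the (nonempty, compact) support rather than merely the inequality~\eqref{eq:pot_geq_F}, since otherwise one would only bound the objective at $\mu_{n}^{\ast}$ from below and could not conclude optimality. I expect no obstacle beyond confirming $\mathop{\mathrm{supp}} \mu_{n}^{\ast} \neq \emptyset$, which is immediate from the total mass being positive.
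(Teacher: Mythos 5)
Your argument is correct and follows essentially the same route as the paper, which simply declares the theorem a direct consequence of Theorems~\ref{thm:Green_basic} and~\ref{thm:PotUpBoundFC}; you have merely written out the assembly explicitly. One small correction to your closing remark: the equality~\eqref{eq:pot_eq_F} is not actually needed, since the lower bound $\inf_{x}(U_{n}^{\mathrm{C}}(\mu_{n}^{\ast};x)+Q(x)) \geq F_{K,Q}^{\mathrm{C}}(n)/n$ from~\eqref{eq:pot_geq_F} together with the universal upper bound $\inf_{x}(U_{n}^{\mathrm{C}}(\mu;x)+Q(x)) \leq F_{K,Q}^{\mathrm{C}}(n)/n$ from Theorem~\ref{thm:PotUpBoundFC} (valid for every $\mu \in \mathcal{M}_{\mathrm{c}}(\mathbf{R},n)$) already chains to give optimality of $\mu_{n}^{\ast}$, so your step establishing the exact value of the infimum is harmless but redundant.
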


\begin{proof}
This is a direct consequence of Theorems~\ref{thm:Green_basic} and~\ref{thm:PotUpBoundFC}.
\end{proof}

%--------------------
\section{Minimization of discrete energy}
\label{sec:MinEnergy}

Our ideal goal is finding an optimal solution $a^{\dagger} \in \mathcal{R}_{n}$ of Problem (D) defined in~\eqref{eq:OptPot}
and proposing an optimal interpolation formula $L_{n}[a^{\dagger}; f]$. 
However, it is difficult to solve Problem (D) directly. 
Therefore, with a view to a discrete analogue of Theorem~\ref{thm:equi_meas_opt}, 
we define discrete energy $I_{n}^{\mathrm{D}}(a)$ as
\begin{align}
I_{n}^{\mathrm{D}}(a) 
= 
\sum_{i=1}^{n} \sum_{\begin{subarray}{c} j=1 \\ j \neq i \end{subarray}}^{n} K(a_{i} - a_{j}) 
+ \frac{2(n-1)}{n} \sum_{i=1}^{n} Q(a_{i})
\label{eq:def_D_ene}
\end{align}
for $a = (a_{1}, \ldots, a_{n}) \in \mathcal{R}_{n}$, and consider its minimization.

In this section, we show that 
$I_{n}^{\mathrm{D}}$ is easily tractable owing to Assumptions~\ref{assump:w} and~\ref{assump:w_convex}, 
and that its minimizer is an approximate solution of Problem (D). 
First, we confirm the basic properties of $K$ and $Q$. 

\begin{prop}
\label{thm:basic_K}
The function $K$ defined by~\eqref{eq:SettingK} is positive, even, and convex as a function on $\mathbf{R} \setminus \{ 0 \}$. 
Furthermore, it satisfies $\displaystyle \lim_{x \to \pm 0} K(x) = \infty$. 
\end{prop}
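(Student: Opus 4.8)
The plan is to set $c = \pi/(4d) > 0$, so that $K(x) = -\log|\tanh(cx)|$, and to read off each of the four claimed properties from elementary facts about $\tanh$. Three of them are essentially immediate. Evenness follows because $\tanh$ is odd, whence $|\tanh(c(-x))| = |\tanh(cx)|$ and $K(-x) = K(x)$. Positivity follows because, for $x \in \mathbf{R} \setminus \{0\}$, we have $\tanh(cx) \in (-1,1) \setminus \{0\}$, so $|\tanh(cx)| \in (0,1)$ and therefore $\log|\tanh(cx)| < 0$, i.e. $K(x) > 0$. For the blow-up at the origin, $\tanh(cx) \sim cx \to 0$ as $x \to 0$, so $|\tanh(cx)| \to 0$ and $K(x) \to +\infty$; the two-sided limit then follows from evenness.

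The substantive step is convexity. Since $\mathbf{R} \setminus \{0\}$ has two connected components and $K$ is even, I would reduce to proving convexity on $(0,\infty)$, where $K(x) = -\log\tanh(cx)$. I see two routes. The direct route differentiates: using $\operatorname{sech}^2 = 1 - \tanh^2$ to simplify first, one gets $K'(x) = -c(\coth(cx) - \tanh(cx))$ and then $K''(x) = c^2(\operatorname{csch}^2(cx) + \operatorname{sech}^2(cx))$, which is manifestly positive, so convexity follows as $K \in C^{\infty}((0,\infty))$.

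The slicker route uses a convexity-revealing series. For $x > 0$, setting $u = \ee^{-2cx} \in (0,1)$ gives $\tanh(cx) = (1-u)/(1+u)$, so $K(x) = \log\frac{1+u}{1-u} = 2\sum_{m \geq 0} \frac{1}{2m+1}\,\ee^{-2c(2m+1)x}$. This exhibits $K$ on $(0,\infty)$ as a series with positive coefficients of the convex functions $x \mapsto \ee^{-\lambda x}$ with $\lambda = 2c(2m+1) > 0$; uniform convergence on compact subsets of $(0,\infty)$ then transfers both positivity and convexity to the sum. As a bonus, this representation re-proves the blow-up, since $u \to 1^-$ as $x \to 0^+$ forces the sum to diverge.

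I expect convexity to be the only real obstacle, and even it is routine once one commits to a route; the only care needed is the bookkeeping, namely simplifying $K''$ into a sum of squares of hyperbolic reciprocals in the first route, or justifying the termwise manipulations in the second. I would lean on the series approach, since it delivers positivity, convexity on $(0,\infty)$, and the boundary blow-up simultaneously, leaving only evenness to be recorded separately.
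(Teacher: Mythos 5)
Your proof is correct. The paper does not actually give a proof of this proposition---it states that it ``can be easily proved'' and omits it---so there is nothing to compare against; either of your two routes (the explicit computation $K''(x) = c^{2}\left(\operatorname{csch}^{2}(cx) + \operatorname{sech}^{2}(cx)\right) > 0$ on $(0,\infty)$ combined with evenness, or the series $K(x) = 2\sum_{m \geq 0} (2m+1)^{-1} \ee^{-2c(2m+1)x}$ for $x>0$) is a valid, elementary way to fill in the omitted argument, and you read ``convex on $\mathbf{R}\setminus\{0\}$'' in the only tenable way, namely componentwise, which is also how the paper later uses the property.
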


\begin{prop}
\label{thm:basic_Q}
On Assumptions~\ref{assump:w} and~\ref{assump:w_convex}, 
the function $Q$ defined by~\eqref{eq:SettingQ} 
is twice differentiable and strictly convex on $\mathbf{R}$. 
Therefore, we have $Q''(x) > 0$ for any $x \in \mathbf{R}$. 
\end{prop}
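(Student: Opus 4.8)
The plan is to unpack $Q = -\log w$ through the analytic structure of $w$ and to dispose of the three assertions — twice differentiability, strict convexity, and the pointwise bound $Q''(x) > 0$ — separately. The first two follow softly from Assumptions~\ref{assump:w} and~\ref{assump:w_convex}; the step I expect to be delicate is the strict inequality $Q'' > 0$ at every point, which is genuinely stronger than plain strict convexity.

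First I would settle regularity. Since the real axis lies in $\mathcal{D}_{d}$ and, by Assumption~\ref{assump:w}, $w$ is analytic and non-vanishing on $\mathcal{D}_{d}$ while taking positive real values on $\mathbf{R}$, the restriction $w|_{\mathbf{R}}$ is a positive real-analytic function. As the real logarithm is analytic on the positive half-line, $\log w$ is real-analytic on $\mathbf{R}$, whence $Q = -\log w$ is real-analytic as well; in particular $Q$ is $C^{2}$ (indeed $C^{\infty}$), and $Q''$ is itself real-analytic. This establishes twice differentiability and, as a bonus, the analytic regularity of $Q''$.

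Strict convexity of $Q$ then requires no work: since $Q = -\log w$, the strict concavity of $\log w$ asserted in Assumption~\ref{assump:w_convex} is verbatim the strict convexity of $Q$ on $\mathbf{R}$.

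The main obstacle is promoting strict convexity to the pointwise statement $Q''(x) > 0$. For a $C^{2}$ function the strict convexity inequality yields only $Q'' \geq 0$, and this cannot in general be sharpened to strict positivity everywhere: the map $x \mapsto x^{4}$ is strictly convex yet has a vanishing second derivative at the origin, and real-analyticity does not rescue matters, since a nonnegative analytic function may still vanish on an isolated set. Accordingly I would read Assumption~\ref{assump:w_convex}, for the smooth weights at hand, in its operative differential form $(\log w)''(x) < 0$ for all $x \in \mathbf{R}$ — the reading under which the proposition's concluding ``Therefore'' is justified — and conclude immediately that $Q''(x) = -(\log w)''(x) > 0$ for every $x \in \mathbf{R}$. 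In short, the content of the proposition is that strict log-concavity of $w$ is to be understood as the strict negativity of $(\log w)''$, and it is precisely this differential form that supplies the pointwise bound needed later for the strict convexity of the discrete energy $I_{n}^{\mathrm{D}}$.
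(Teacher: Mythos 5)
The paper offers no proof to compare against: it dismisses Propositions~\ref{thm:basic_K} and~\ref{thm:basic_Q} with ``Because these propositions can be easily proved, we omit their proofs.'' Your argument for the first two claims is correct and is essentially the only natural route: analyticity and non-vanishing of $w$ on $\mathcal{D}_{d}$ together with positivity on $\mathbf{R}$ give a local holomorphic branch of $\log w$ that is real on the real axis, so $Q=-\log w$ is real-analytic there, and strict convexity of $Q$ is literally Assumption~\ref{assump:w_convex} restated. More importantly, your observation about the final ``Therefore'' is a genuine and correct criticism of the statement, not a defect of your proof: strict convexity of a $C^{2}$ (even real-analytic) function yields only $Q''\geq 0$, and the implication to $Q''>0$ everywhere fails within the paper's own hypothesis class --- the weight $w(z)=\ee^{-z^{4}}$ satisfies Assumptions~\ref{assump:w} and~\ref{assump:w_convex} as literally stated, yet $Q(x)=x^{4}$ has $Q''(0)=0$. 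Your proposed repair, reading ``strictly concave'' in the operative differential form $(\log w)''<0$ on $\mathbf{R}$, is the right one; it is evidently what the authors intend, since the strict diagonal dominance argument for the Hessian $H_{n}(a)$ in the proof of Theorem~3.3 uses $Q''(a_{\ell})>0$ at every sampling point. The only caveat is that, as written, your text resolves the issue by strengthening the hypothesis rather than proving the stated implication; it would be worth saying explicitly that Assumption~\ref{assump:w_convex} should be amended to require $(\log w)''(x)<0$ for all $x\in\mathbf{R}$, after which the proposition is immediate.
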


\noindent
Because these propositions can be easily proved, we omit their proofs. 
Next, we show the solvability of the minimization of $I_{n}^{\mathrm{D}}$. 

\begin{thm}
On Assumptions~\ref{assump:w} and~\ref{assump:w_convex}, 
the energy $I_{n}^{\mathrm{D}}$ is convex in $\mathcal{R}_{n}$, and 
there is a unique minimizer of $I_{n}^{\mathrm{D}}$ in $\mathcal{R}_{n}$. 
\end{thm}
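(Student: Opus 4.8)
The plan is to establish the two assertions separately: convexity of $I_{n}^{\mathrm{D}}$ on $\mathcal{R}_{n}$, and then existence and uniqueness of a minimizer. For convexity, I would examine the two groups of terms in the definition~\eqref{eq:def_D_ene}. The external-field part $\frac{2(n-1)}{n} \sum_{i=1}^{n} Q(a_{i})$ is a nonnegative-weighted sum of the strictly convex functions $a_{i} \mapsto Q(a_{i})$ (by Proposition~\ref{thm:basic_Q}), hence it is strictly convex in $a = (a_{1}, \ldots, a_{n})$. The interaction part $\sum_{i} \sum_{j \neq i} K(a_{i} - a_{j})$ is more delicate: $K$ is convex only on $\mathbf{R} \setminus \{0\}$ (Proposition~\ref{thm:basic_K}), and each summand depends on the difference $a_{i} - a_{j}$. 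The key observation is that on the open region $\mathcal{R}_{n}$ we always have $a_{i} \neq a_{j}$, so each term $K(a_{i} - a_{j})$ is well defined and smooth. A function of the form $(a_{i}, a_{j}) \mapsto K(a_{i} - a_{j})$ with $K$ convex on the relevant line is convex as a function on $\mathbf{R}^{2}$, since it is the composition of the convex $K$ with the linear map $(a_{i}, a_{j}) \mapsto a_{i} - a_{j}$; summing such convex terms preserves convexity. Therefore $I_{n}^{\mathrm{D}}$ is convex on $\mathcal{R}_{n}$, and in fact strictly convex because of the strictly convex external field.

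The main obstacle is existence: $\mathcal{R}_{n}$ is an open set, not closed, so the infimum might be approached only at its boundary, where either two points coalesce (some $a_{i} - a_{j} \to 0$) or some coordinate escapes to $\pm\infty$. I would rule out both by a coercivity and barrier argument. As two points approach each other, $K(a_{i} - a_{j}) \to +\infty$ by the blow-up $\lim_{x \to \pm 0} K(x) = \infty$ from Proposition~\ref{thm:basic_K}, so the interaction energy forms an infinite barrier that prevents the minimizing sequence from approaching the coalescence part of the boundary. As any coordinate tends to $\pm\infty$, the external field $Q(a_{i}) = -\log w(a_{i}) \to +\infty$ (since $w$ takes values in $(0,1]$ on $\mathbf{R}$ and decays, giving $\lim_{x \to \pm\infty} Q(x) = \infty$ as already noted in the proof of Theorem~\ref{thm:Green_basic}), while the $K$-terms remain bounded below by positivity of $K$; hence $I_{n}^{\mathrm{D}}(a) \to +\infty$ and the energy is coercive.

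With these two facts in hand, I would fix any sublevel set $\{ a \in \mathcal{R}_{n} : I_{n}^{\mathrm{D}}(a) \leq C \}$ for $C$ exceeding the value of $I_{n}^{\mathrm{D}}$ at some reference configuration. The coercivity bounds the coordinates in a compact box, and the logarithmic barrier keeps the pairwise gaps bounded away from zero; consequently this sublevel set is a compact subset of $\mathcal{R}_{n}$ on which the continuous function $I_{n}^{\mathrm{D}}$ attains its minimum. Finally, uniqueness follows from strict convexity: if two distinct minimizers existed, evaluating $I_{n}^{\mathrm{D}}$ at their midpoint (which lies in the convex set $\mathcal{R}_{n}$) would yield a strictly smaller value, a contradiction. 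This completes the argument, the delicate point throughout being the careful confinement of minimizing sequences to the interior of $\mathcal{R}_{n}$ so that convexity on $\mathbf{R} \setminus \{0\}$ can be applied.
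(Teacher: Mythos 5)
Your proposal is correct, and the existence part is essentially the paper's own argument: both use the growth of $Q$ at infinity to confine the coordinates to a bounded box and the blow-up of $K$ at the origin as a barrier against coalescence; the paper packages this by extending $\exp(-I_{n}^{\mathrm{D}})$ continuously by zero to the closure of a bounded slice of $\mathcal{R}_{n}$ and maximizing there, while you phrase it as compactness of sublevel sets --- the two are interchangeable. Where you genuinely diverge is the convexity step. The paper computes the Hessian $H_{n}(a)$ explicitly, observes that its off-diagonal entries are $-2K''(a_{\ell}-a_{k})$ and its diagonal entries exceed the corresponding row sums because $Q''>0$, and concludes positive definiteness from strict diagonal dominance. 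You instead argue structurally: each $K(a_{i}-a_{j})$ is convex as a composition of $K$ with a linear functional, and all the strictness is supplied by the $Q$-part, since $\sum_{i}Q(a_{i})$ is strictly convex on $\mathbf{R}^{n}$ (two distinct configurations differ in at least one coordinate). Your route is more elementary --- it needs only convexity of $K$, not two derivatives --- whereas the paper's Hessian computation is what it reuses later for Newton's method in Section~\ref{sec:num_algo}, so it earns its keep there. One point you should make fully explicit: $K$ is convex only on $\mathbf{R}\setminus\{0\}$, which is not a convex set, so the composition rule does not apply verbatim; it works here because for each fixed pair $i\neq j$ the sign of $a_{i}-a_{j}$ is constant on the convex set $\mathcal{R}_{n}$, so you are composing the linear map with $K$ restricted to a single half-line $(0,\infty)$ or $(-\infty,0)$, on each of which $K$ is convex. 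Your phrase ``convex on the relevant line'' gestures at this but deserves a sentence of its own.
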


\begin{proof}
Let $H_{n}(a)$ be the Hessian of $I_{n}^{\mathrm{D}}$ at $a \in \mathcal{R}_{n}$. 
First, we show that $H_{n}(a)$ is positive definite for any $a \in \mathcal{R}_{n}$. 
Because we have
\begin{align}
\frac{\partial}{\partial a_{\ell}} I_{n}^{\mathrm{D}}(a) 
= 
2 \sum_{\begin{subarray}{c} j = 1 \\ j \neq \ell \end{subarray}}^{n} K'(a_{\ell} - a_{j}) + \frac{2(n-1)}{n} Q'(a_{\ell}), 
\label{eq:first_deriv_I_D}
\end{align}
the $(k, \ell)$-component of $H_{n}(a)$ is given by 
\begin{align}
\frac{\partial^{2}}{\partial a_{k} \partial a_{\ell}} I_{n}^{\mathrm{D}}(a) 
= 
\begin{cases}
2 \sum_{\begin{subarray}{c} j = 1 \\ j \neq \ell \end{subarray}}^{n} K''(a_{\ell} - a_{j}) + \frac{2(n-1)}{n} Q''(a_{\ell}) & (k = \ell) \\ 
- 2 K''(a_{\ell} - a_{k}) & (k \neq \ell). 
\end{cases}
\end{align}
Because $K$ is convex and $Q$ is strictly convex, 
the diagonal components of $H_{n}(a)$ are positive. 
Furthermore,  
$H_{n}(a)$ is strictly diagonally dominant because
\begin{align*}
\sum_{\begin{subarray}{c} k = 1 \\ k \neq \ell \end{subarray}}^{n} \left| -2 K''(a_{\ell} - a_{k}) \right|
& = 
2 \sum_{\begin{subarray}{c} k = 1 \\ k \neq \ell \end{subarray}}^{n} K''(a_{\ell} - a_{k}) 
< 
2 \sum_{\begin{subarray}{c} k = 1 \\ k \neq \ell \end{subarray}}^{n} K''(a_{\ell} - a_{k}) + \frac{2(n-1)}{n} Q''(a_{\ell}). 
\end{align*}
Therefore $H_{n}(a)$ is positive definite \cite[Corollary 7.2.3]{bib:HJ_Matrix_1990}, 
which implies $I_{n}^{\mathrm{D}}$ is a strictly convex function on $\mathcal{R}_{n}$. 

Next, we show the existence of a unique minimizer of $I_{n}^{\mathrm{D}}$ in $\mathcal{R}_{n}$. 
Because $Q(x) \to \infty$ as $x \to \pm \infty$, 
there exists $r_{n} > 0$ such that 
\begin{align*}
|x| > r_{n} \ \Rightarrow \ \frac{2(n-1)}{n} Q(x) > I_{n}^{\mathrm{D}}(1, 2, \ldots, n). 
\end{align*}
Then, for $(a_{1}, \ldots , a_{n}) \in \mathcal{R}_{n}$ with $\max\{ |a_{1}|, |a_{n}| \} > r_{n}$, 
we have 
\begin{align*}
I_{n}^{\mathrm{D}}(a_{1}, \ldots, a_{n}) > \frac{2(n-1)}{n} \max\{ Q(a_{1}), Q(a_{n}) \} > I_{n}^{\mathrm{D}}(1,2, \ldots, n). 
\end{align*}
Therefore, it suffices to consider the minimization of $I_{n}^{\mathrm{D}}(a)$ in the bounded set
$\tilde{\mathcal{R}}_{n} = 
\{ (a_{1}, \ldots, a_{n}) \in \mathbf{R}^{n} \mid -r_{n} \leq a_{1} < \cdots < a_{n} \leq r_{n} \}$. 
This minimization is equivalent to the maximization of $\exp(-I_{n}^{\mathrm{D}}(a))$ on $\tilde{\mathcal{R}}_{n}$. 
Because
\begin{align*}
\lim_{\begin{subarray}{c} a_{i} \to a_{j} \\ (a_{1}, \ldots, a_{n}) \in \tilde{\mathcal{R}}_{n} \end{subarray}} \exp(-I_{n}^{\mathrm{D}}(a)) = 0
\qquad (j = i-1 \text{ or } j = i+1), 
\end{align*}
the function $J_{n}^{\mathrm{D}}(a)$ defined by 
\begin{align*}
J_{n}^{\mathrm{D}}(a) = 
\begin{cases}
\exp(-I_{n}^{\mathrm{D}}(a)) & (a \in \tilde{\mathcal{R}}_{n}), \\
0  & (a \in \mathop{\mathrm{cl}}(\tilde{\mathcal{R}}_{n}) \setminus \tilde{\mathcal{R}}_{n})
\end{cases}
\end{align*}
is a continuous function on $\mathop{\mathrm{cl}}(\tilde{\mathcal{R}}_{n})$, 
where ``$\mathop{\mathrm{cl}}$'' denotes the closure of a set. 
Therefore, there exists a maximizer of $J_{n}^{\mathrm{D}}(a)$ in $\mathop{\mathrm{cl}}(\tilde{\mathcal{R}}_{n})$. 
Actually any maximizer is in $\tilde{\mathcal{R}}_{n}$ because the maximum value is positive. 
Hence the minimizer of $I_{n}^{\mathrm{D}}(a)$ exists in $\tilde{\mathcal{R}}_{n}$, 
which is unique because $I_{n}^{\mathrm{D}}(a)$ is strictly convex. 
\end{proof}

Let $a^{\ast} = (a_{1}^{\ast}, \ldots , a_{n}^{\ast}) \in \mathcal{R}_{n}$ be the minimizer of $I_{n}^{\mathrm{D}}$, 
and let $F_{K, Q}^{\mathrm{D}}(n)$ be the number defined by
\begin{align}
F_{K, Q}^{\mathrm{D}}(n) = I_{n}^{\mathrm{D}}(a^{\ast}) - \frac{n-1}{n} \sum_{i=1}^{n} Q(a_{i}^{\ast}), 
\label{eq:def_FD}
\end{align}
which is a discrete analogue of $F_{K, Q}^{\mathrm{C}}(n)$ in~\eqref{eq:def_FC}. 
Then, we can show a discrete analogue of Inequality~\eqref{eq:pot_geq_F}, 
which indicates that $a^{\ast}$ is an approximate solution of Problem (D). 

\begin{thm}
\label{thm:PotLowBoundFD}
Let $a^{\ast} \in \mathcal{R}_{n}$ be the minimizer of $I_{n}^{\mathrm{D}}$. 
On Assumptions~\ref{assump:w} and~\ref{assump:w_convex}, 
we have
\begin{align}
U_{n}^{\mathrm{D}}(a^{\ast};x) + Q(x) \geq \frac{F_{K, Q}^{\mathrm{D}}(n)}{n-1}
\qquad \text{for any $x \in \mathbf{R}$.}
\label{eq:PotLowBoundFD}
\end{align}
\end{thm}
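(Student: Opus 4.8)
The plan is to establish the bound by a node-swapping comparison argument that exploits only the global minimality of $a^{\ast}$, together with the evenness of $K$ from Proposition~\ref{thm:basic_K}. Write $g(x) = U_{n}^{\mathrm{D}}(a^{\ast};x) + Q(x) = \sum_{i=1}^{n} K(x - a_{i}^{\ast}) + Q(x)$. If $x$ coincides with some $a_{i}^{\ast}$, then $K(x - a_{i}^{\ast}) = +\infty$ by Proposition~\ref{thm:basic_K} and the inequality holds trivially, so I may assume $x = t$ is distinct from every $a_{i}^{\ast}$.

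For each index $\ell$, I would form the competitor configuration $a^{(\ell)}$ obtained from $a^{\ast}$ by replacing the node $a_{\ell}^{\ast}$ with $t$. Since $t$ differs from all $a_{i}^{\ast}$, this is a tuple of distinct reals, and because $I_{n}^{\mathrm{D}}$ is symmetric in its arguments, its minimum over $\mathcal{R}_{n}$ equals its minimum over all distinct-entry tuples; hence $I_{n}^{\mathrm{D}}(a^{(\ell)}) \geq I_{n}^{\mathrm{D}}(a^{\ast})$. Expanding this inequality, all terms $K(a_{i}^{\ast} - a_{j}^{\ast})$ with $i,j \neq \ell$ cancel, and using $K(-u) = K(u)$ the two mixed sums coincide; after dividing by $2$ this yields the one-node estimate
\begin{align}
\sum_{\begin{subarray}{c} j = 1 \\ j \neq \ell \end{subarray}}^{n} K(t - a_{j}^{\ast}) + \frac{n-1}{n}\, Q(t)
\ \geq \
\sum_{\begin{subarray}{c} j = 1 \\ j \neq \ell \end{subarray}}^{n} K(a_{\ell}^{\ast} - a_{j}^{\ast}) + \frac{n-1}{n}\, Q(a_{\ell}^{\ast}).
\label{eq:plan_swap}
\end{align}

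Finally I would sum \eqref{eq:plan_swap} over $\ell = 1, \ldots, n$. On the left, each $K(t - a_{j}^{\ast})$ is counted once for every $\ell \neq j$, i.e.\ exactly $n-1$ times, giving $(n-1)\sum_{j} K(t - a_{j}^{\ast})$, while the field part contributes $(n-1)Q(t)$; together the left side becomes precisely $(n-1)\,g(t)$. On the right, the double sum reassembles the first term of $I_{n}^{\mathrm{D}}(a^{\ast})$ in \eqref{eq:def_D_ene} and the field part gives $\frac{n-1}{n}\sum_{i} Q(a_{i}^{\ast})$, so the right side is exactly $F_{K,Q}^{\mathrm{D}}(n)$ as defined in \eqref{eq:def_FD}. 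Dividing by $n-1$ gives the claim. The one point requiring care is this bookkeeping: it is precisely the normalization $\frac{2(n-1)}{n}$ of the external field in \eqref{eq:def_D_ene} that makes the summed field terms collapse into the combination defining $F_{K,Q}^{\mathrm{D}}(n)$, so checking that the coefficients match is the crux of the argument. Notably, this route uses only that $a^{\ast}$ is a global minimizer, not the convexity established earlier; an alternative proof would instead observe that $g$ is strictly convex on each open interval between consecutive nodes (a sum of convex translates $K(\cdot - a_{i}^{\ast})$ and the strictly convex $Q$) and combine the stationarity condition implicit in \eqref{eq:first_deriv_I_D} with the principle of domination, but the swapping argument is shorter and avoids analyzing each subinterval separately.
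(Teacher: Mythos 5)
Your proposal is correct and is essentially the paper's own argument: the paper proves the same one-node inequality \eqref{eq:PartPotLowBound} by contradiction (replacing $a_{k}^{\ast}$ with $x$ and sorting, which is your symmetry remark), then sums over $k$ and performs the identical coefficient bookkeeping to recover $F_{K,Q}^{\mathrm{D}}(n)$. Your explicit handling of the case $x = a_{i}^{\ast}$ and the direct (rather than contrapositive) phrasing are only cosmetic differences.
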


\begin{proof}
First, we show that
\begin{align}
\sum_{\begin{subarray}{c} j = 1 \\ j \neq k \end{subarray}}^{n} K(x - a_{j}^{\ast}) + \frac{n-1}{n} Q(x)
\geq  
\sum_{\begin{subarray}{c} j = 1 \\ j \neq k \end{subarray}}^{n} K(a_{k}^{\ast} - a_{j}^{\ast}) + \frac{n-1}{n} Q(a_{k}^{\ast})
\label{eq:PartPotLowBound}
\end{align}
for any $x \in \mathbf{R}$ and $k = 1,\ldots, n$.
Suppose that Inequality~\eqref{eq:PartPotLowBound} does not hold for some $x$ and $k$:
\begin{align} 
\sum_{\begin{subarray}{c} j = 1 \\ j \neq k \end{subarray}}^{n} K(x - a_{j}^{\ast}) + \frac{n-1}{n} Q(x)
<
\sum_{\begin{subarray}{c} j = 1 \\ j \neq k \end{subarray}}^{n} K(a_{k}^{\ast} - a_{j}^{\ast}) + \frac{n-1}{n} Q(a_{k}^{\ast}). 
\label{eq:not_PartPotLowBound}
\end{align}
Then, by multiplying both sides of \eqref{eq:not_PartPotLowBound} by $2$ and adding 
\begin{align*}
\sum_{\begin{subarray}{c} i = 1 \\ i \neq k \end{subarray}}^{n}
\sum_{\begin{subarray}{c} j = 1 \\ j \neq i, k \end{subarray}}^{n} K(a_{i}^{\ast} - a_{j}^{\ast}) 
+ \frac{2(n-1)}{n} \sum_{\begin{subarray}{c} i = 1 \\ i \neq k \end{subarray}}^{n} Q(a_{i}^{\ast})
\end{align*}
to them, we have 
\begin{align*}
I_{n}^{\mathrm{D}}(b) = 
2\sum_{\begin{subarray}{c} j = 1 \\ j \neq k \end{subarray}}^{n} K(x - a_{j}^{\ast})
+
\sum_{\begin{subarray}{c} i = 1 \\ i \neq k \end{subarray}}^{n}
\sum_{\begin{subarray}{c} j = 1 \\ j \neq i, k \end{subarray}}^{n} K(a_{i}^{\ast} - a_{j}^{\ast}) 
+ \frac{2(n-1)}{n} \left( Q(x) + \sum_{\begin{subarray}{c} i = 1 \\ i \neq k \end{subarray}}^{n} Q(a_{i}^{\ast}) \right)
< 
I_{n}^{\mathrm{D}}(a^{\ast}), 
\end{align*}
where $b = (b_{1}, \ldots, b_{n}) \in \mathcal{R}_{n}$ is the $n$ point configuration 
obtained by sorting $(a_{1}^{\ast}, \ldots, a_{k-1}^{\ast}, x , a_{k+1}^{\ast}, \ldots, a_{n}^{\ast})$. 
Thus we have Inequality~\eqref{eq:PartPotLowBound} by contradiction. 

Then, summing up both sides of Inequality~\eqref{eq:PartPotLowBound} for $k = 1,\ldots, n$, we have
\begin{align}
& \sum_{k=1}^{n} 
\left( \sum_{j=1}^{n} K(x - a_{j}^{\ast}) - K(x - a_{k}^{\ast}) \right)
+ (n-1) Q(x) \geq I_{n}^{\mathrm{D}}(a^{\ast}) - \frac{n-1}{n} \sum_{i = 1}^{n} Q(a_{i}^{\ast}) \notag \\
& \iff
(n-1) \left( \sum_{j=1}^{n} K(x - a_{j}^{\ast}) + Q(x) \right) \geq F_{K, Q}^{\mathrm{D}}(n), \notag
\end{align}
which is equivalent to Inequality~\eqref{eq:PotLowBoundFD}. 
\end{proof}

Let $P_{n}$ be the optimal value of Problem (D) in~\eqref{eq:OptPot}:
\begin{align}
P_{n} = 
\sup_{a_{i} \in \mathbf{R}}
\left(
\inf_{x \in \mathbf{R}} \left( U_{n}^{\mathrm{D}}(a;x) + Q(x) \right)
\right).
\label{eq:OptVal}
\end{align}
Then, 
by using Theorems~\ref{thm:PotUpBoundFC} and~\ref{thm:PotLowBoundFD}, 
we can obtain lower and upper bounds of $P_{n}$. 

\begin{thm}
\label{thm:upper_lower_mlogE}
On Assumptions~\ref{assump:w} and~\ref{assump:w_convex}, 
we have
\begin{align}
\frac{F_{K,Q}^{\mathrm{D}}(n)}{n} 
\leq 
P_{n} 
\leq 
\frac{F_{K,Q}^{\mathrm{C}}(n)}{n}, 
\end{align}
which implies that the minimizer $a^{\ast} \in \mathcal{R}_{n}$ of $I_{n}^{\mathrm{D}}$
is an approximate solution of Problem (D) in~\eqref{eq:OptPot}, 
whose approximation rate is bounded by $F_{K,Q}^{\mathrm{D}}(n)/F_{K,Q}^{\mathrm{C}}(n)$. 
\end{thm}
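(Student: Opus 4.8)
The plan is to prove the two inequalities separately, since each is a short consequence of one of the already-established theorems: Theorem~\ref{thm:PotLowBoundFD} handles the lower bound by controlling the discrete potential at the minimizer $a^{\ast}$, while Theorem~\ref{thm:PotUpBoundFC} handles the upper bound by controlling the continuous potential of any compactly supported measure.

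For the lower bound $F_{K,Q}^{\mathrm{D}}(n)/n \leq P_n$, I would first observe that the minimizer $a^{\ast}$ is itself a feasible point for Problem (D), so that by the definition~\eqref{eq:OptVal} of $P_n$ we immediately get $P_n \geq \inf_{x \in \mathbf{R}}(U_n^{\mathrm{D}}(a^{\ast};x) + Q(x))$. Theorem~\ref{thm:PotLowBoundFD} bounds this infimum below by $F_{K,Q}^{\mathrm{D}}(n)/(n-1)$. The only remaining work is to pass from the denominator $n-1$ to the weaker denominator $n$, and for this I would use that $F_{K,Q}^{\mathrm{D}}(n) \geq 0$: rewriting~\eqref{eq:def_FD} gives $F_{K,Q}^{\mathrm{D}}(n) = \sum_{i}\sum_{j \neq i} K(a_i^{\ast}-a_j^{\ast}) + \frac{n-1}{n}\sum_{i} Q(a_i^{\ast})$, and every summand is non-negative because $K$ is positive by Proposition~\ref{thm:basic_K} and $Q = -\log w \geq 0$ on $\mathbf{R}$ by Assumption~\ref{assump:w}. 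Hence $F_{K,Q}^{\mathrm{D}}(n)/(n-1) \geq F_{K,Q}^{\mathrm{D}}(n)/n$, and the lower bound follows.

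For the upper bound $P_n \leq F_{K,Q}^{\mathrm{C}}(n)/n$, the key observation is that the discrete potential is literally the continuous potential of the associated discrete measure. Given any $a \in \mathcal{R}_n$, I would set $\mu_a = \sum_{i=1}^{n} \delta_{a_i}$, which lies in $\mathcal{M}_{\mathrm{c}}(\mathbf{R},n)$, and note from~\eqref{eq:def_C_pot} and~\eqref{eq:def_D_pot} that $U_n^{\mathrm{C}}(\mu_a;x) = \sum_{i} K(x-a_i) = U_n^{\mathrm{D}}(a;x)$. Applying Theorem~\ref{thm:PotUpBoundFC} to $\mu_a$ produces some $x \in \mathbf{R}$ with $U_n^{\mathrm{D}}(a;x) + Q(x) \leq F_{K,Q}^{\mathrm{C}}(n)/n$, whence $\inf_{x}(U_n^{\mathrm{D}}(a;x) + Q(x)) \leq F_{K,Q}^{\mathrm{C}}(n)/n$ for every $a \in \mathcal{R}_n$. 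Taking the supremum over $a$ yields $P_n \leq F_{K,Q}^{\mathrm{C}}(n)/n$.

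I expect no serious obstacle here, as both directions reduce to bookkeeping on top of the preceding theorems; the work is in correctly matching the discrete and continuous objects. The one subtle point is the denominator mismatch in the lower bound, where Theorem~\ref{thm:PotLowBoundFD} naturally yields $n-1$ and only the sign condition $F_{K,Q}^{\mathrm{D}}(n) \geq 0$ permits weakening it to the stated $n$. The final remark on the approximation rate is then immediate: the guaranteed value $F_{K,Q}^{\mathrm{D}}(n)/n$ and the optimal upper bound $F_{K,Q}^{\mathrm{C}}(n)/n$ share the common factor $1/n$, so their ratio is exactly $F_{K,Q}^{\mathrm{D}}(n)/F_{K,Q}^{\mathrm{C}}(n)$.
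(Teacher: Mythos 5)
Your proof is correct and follows essentially the same route as the paper: the lower bound via Theorem~\ref{thm:PotLowBoundFD} applied to the feasible point $a^{\ast}$, and the upper bound by specializing Theorem~\ref{thm:PotUpBoundFC} to the discrete measure $\sum_{i}\delta_{a_{i}}$. In fact you make explicit a step the paper's displayed chain glosses over, namely that passing from the denominator $n-1$ to $n$ requires $F_{K,Q}^{\mathrm{D}}(n)\geq 0$, which you correctly justify from the positivity of $K$ and of $Q=-\log w$.
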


\begin{proof}
By Theorem~\ref{thm:PotLowBoundFD}, 
we can obtain the lower bound as follows: 
\begin{align}
P_{n}
\geq 
\inf_{x \in \mathbf{R}} (U_{n}^{\mathrm{D}}(a^{\ast}; x)+Q(x)) 
\geq 
\frac{F_{K,Q}^{\mathrm{D}}(n)}{n}. 
\label{eq:LowBdPn}
\end{align}

On the other hand, we can provide the upper bound by Theorem~\ref{thm:PotUpBoundFC}. 
In fact, for any $a \in \mathcal{R}_{n}$, there exists $x \in \mathbf{R}$ such that
\begin{align}
U_{n}^{\mathrm{D}}(a;x) + Q(x) \leq \frac{F_{K,Q}^{\mathrm{C}}(n)}{n}
\end{align}
because we can consider the special case 
$\mu = \sum_{i=1}^{n} \delta_{a_{i}} \in \mathcal{M}_{\mathrm{c}}(\mathbf{R}, n)$ in Theorem~\ref{thm:PotUpBoundFC}.
Then, we have
\begin{align}
P_{n}
\leq 
\frac{F_{K,Q}^{\mathrm{C}}(n)}{n}. 
\label{eq:UpBdPn}
\end{align}
\end{proof}

%--------------------
\section{Design of approximation formulas}
\label{sec:Approx}

%----------
\subsection{Proposed formula and its error estimate}

By using the minimizer $a^{\ast} \in \mathcal{R}_{n}$ of $I_{n}^{\mathrm{D}}$, 
%we consider approximation of functions in $\boldsymbol{H}^{\infty}(\mathcal{D}_{d}, w)$ on $\mathbf{R}$. 
we propose the approximation formula $L_{n}[a^{\ast}; f]$ for $f \in \boldsymbol{H}^{\infty}(\mathcal{D}_{d}, w)$, 
where $L_{n}[a; f]$ is defined by~\eqref{eq:Interp_Op}. 
That is, $L_{n}[a^{\ast}; f]$ is written in the form 
\begin{align}
L_{n}[a^{\ast}; f](x) = 
\sum_{k=1}^{n} f(a_{k}^{\ast}) 
\frac{B_{n:k}(x; a^{\ast}, \mathcal{D}_{d}) \, w(x)}{B_{n:k}(a_{k}^{\ast}; a^{\ast}, \mathcal{D}_{d}) \, w(a_{k}^{\ast})}
\frac{T_{d}'(x - a_{k}^{\ast})}{T_{d}'(0)}.
\label{eq:ProposedApproxFormula}
\end{align}
We can provide an error estimate of this formula. 

\begin{thm}
\label{thm:ErrorEstimate}
Let $a^{\ast} \in \mathcal{R}_{n}$ be the minimizer of the discrete energy $I_{n}^{\mathrm{D}}$ 
and let $L_{n}[a^{\ast}; f]$ be the approximation formula for $f \in \boldsymbol{H}^{\infty}(\mathcal{D}_{d}, w)$
given by~\eqref{eq:ProposedApproxFormula}. 
On Assumptions~\ref{assump:w} and~\ref{assump:w_convex}, we have
\begin{align}
\sup_{
\begin{subarray}{c}
f \in \boldsymbol{H}^{\infty}(\mathcal{D}_{d}, w) \\
\| f \| \leq 1
\end{subarray}
} 
\left(
\sup_{x \in \mathbf{R}}
\left| 
f(x) - L_{n}[a^{\ast}; f](x) 
\right|
\right) 
\leq
\exp \left( - \frac{F_{K,Q}^{\mathrm{D}}(n)}{n} \right). 
\label{eq:ErrorEstimate}
\end{align}
\end{thm}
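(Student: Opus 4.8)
The plan is to obtain the bound by chaining together two results already established: the worst-case error bound of Proposition~\ref{thm:CharMinErr} and the pointwise potential bound of Theorem~\ref{thm:PotLowBoundFD}. Since the minimizer $a^{\ast} = (a_{1}^{\ast}, \ldots, a_{n}^{\ast}) \in \mathcal{R}_{n}$ consists of distinct real points, Proposition~\ref{thm:CharMinErr} applies with $a = a^{\ast}$, so the left-hand side of~\eqref{eq:ErrorEstimate}—which is exactly the middle quantity of that proposition—is bounded above by $\sup_{x \in \mathbf{R}} | B_{n}(x; a^{\ast}, \mathcal{D}_{d}) \, w(x) |$, the expression appearing in~\eqref{eq:CharApprox_Ineq_2nd}. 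The entire task then reduces to showing that this supremum is at most $\exp(-F_{K,Q}^{\mathrm{D}}(n)/n)$.

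To do this I would rewrite the transformed Blaschke product in the exponential form natural to the potential-theoretic setting. For real $x$ and real $a_{j}^{\ast}$, the identity $(T_{d}(x) - T_{d}(a_{j}^{\ast}))/(1 - \overline{T_{d}(a_{j}^{\ast})}\,T_{d}(x)) = T_{d}(x - a_{j}^{\ast})$ noted before~\eqref{eq:CharMinErr_rew} gives $B_{n}(x; a^{\ast}, \mathcal{D}_{d}) = \prod_{j=1}^{n} T_{d}(x - a_{j}^{\ast})$. Because $w$ is positive on $\mathbf{R}$ by Assumption~\ref{assump:w}, taking logarithms and invoking the definitions $K(x) = -\log|T_{d}(x)|$, $Q(x) = -\log w(x)$, and $U_{n}^{\mathrm{D}}(a^{\ast}; x) = \sum_{j} K(x - a_{j}^{\ast})$ from~\eqref{eq:SettingK}, \eqref{eq:SettingQ}, and~\eqref{eq:def_D_pot} yields $|B_{n}(x; a^{\ast}, \mathcal{D}_{d})\,w(x)| = \exp(-(U_{n}^{\mathrm{D}}(a^{\ast}; x) + Q(x)))$. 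Since $\exp$ is increasing, the supremum over $x$ converts to $\sup_{x} |B_{n}(x; a^{\ast}, \mathcal{D}_{d})\,w(x)| = \exp(-\inf_{x \in \mathbf{R}}(U_{n}^{\mathrm{D}}(a^{\ast}; x) + Q(x)))$.

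Finally I would feed in the lower bound of Theorem~\ref{thm:PotLowBoundFD}, namely $U_{n}^{\mathrm{D}}(a^{\ast}; x) + Q(x) \geq F_{K,Q}^{\mathrm{D}}(n)/(n-1)$ for all $x \in \mathbf{R}$, which immediately gives $\sup_{x} |B_{n}(x; a^{\ast}, \mathcal{D}_{d})\,w(x)| \leq \exp(-F_{K,Q}^{\mathrm{D}}(n)/(n-1))$. The single point requiring care—and the only real obstacle in an otherwise mechanical assembly—is that the target exponent carries $n$ in the denominator rather than $n-1$; passing from $n-1$ to $n$ is legitimate only because $F_{K,Q}^{\mathrm{D}}(n) \geq 0$. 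I would verify this nonnegativity directly from~\eqref{eq:def_FD}, which rearranges to $F_{K,Q}^{\mathrm{D}}(n) = \sum_{i \neq j} K(a_{i}^{\ast} - a_{j}^{\ast}) + \frac{n-1}{n}\sum_{i} Q(a_{i}^{\ast})$, where every $K$-term is positive by Proposition~\ref{thm:basic_K} and every $Q(a_{i}^{\ast}) \geq 0$ because $0 < w \leq 1$ on $\mathbf{R}$ by Assumption~\ref{assump:w}. With $F_{K,Q}^{\mathrm{D}}(n) \geq 0$ and $n-1 < n$ we get $F_{K,Q}^{\mathrm{D}}(n)/(n-1) \geq F_{K,Q}^{\mathrm{D}}(n)/n$, hence $\exp(-F_{K,Q}^{\mathrm{D}}(n)/(n-1)) \leq \exp(-F_{K,Q}^{\mathrm{D}}(n)/n)$, and chaining all the inequalities closes the proof.
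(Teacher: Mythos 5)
Your proof is correct and follows essentially the same route as the paper's: bound the worst-case error by $\sup_{x}|B_{n}(x;a^{\ast},\mathcal{D}_{d})\,w(x)|$ via~\eqref{eq:CharApprox_Ineq_2nd}, rewrite this as $\exp\left(-\inf_{x\in\mathbf{R}}(U_{n}^{\mathrm{D}}(a^{\ast};x)+Q(x))\right)$, and invoke Theorem~\ref{thm:PotLowBoundFD}. You additionally make explicit the passage from the exponent $F_{K,Q}^{\mathrm{D}}(n)/(n-1)$ to $F_{K,Q}^{\mathrm{D}}(n)/n$ by verifying $F_{K,Q}^{\mathrm{D}}(n)\geq 0$, a step the paper's proof leaves implicit.
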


\begin{proof}
From Inequality~\eqref{eq:CharApprox_Ineq_2nd} in Proposition~\ref{thm:CharMinErr} and Theorem~\ref{thm:PotLowBoundFD}, 
we have
\begin{align}
\sup_{
\begin{subarray}{c}
f \in \boldsymbol{H}^{\infty}(\mathcal{D}_{d}, w) \\
\| f \| \leq 1
\end{subarray}
} 
\left(
\sup_{x \in \mathbf{R}}
\left| 
f(x) - L_{n}[a^{\ast}; f](x) 
\right|
\right) 
& \leq 
\sup_{x \in \mathbf{R}}
\left| 
B_{n}(x; a^{\ast}, \mathcal{D}_{d}) \, w(x)
\right|. \notag \\
& = 
\sup_{x \in \mathbf{R}}
\exp \left( - U_{n}^{\mathrm{D}}(a^{\ast};x) - Q(x) \right) \notag \\
& \leq 
\exp \left( - \frac{F_{K, Q}^{\mathrm{D}}(n)}{n} \right). \notag 
\end{align}
\end{proof}

\begin{rem}
From the inequalities in Theorem~\ref{thm:upper_lower_mlogE}, 
we have
\begin{align}
\exp \left( - \frac{F_{K,Q}^{\mathrm{C}}(n)}{n} \right)
\leq 
E_{n}^{\mathrm{min}}(\boldsymbol{H}^{\infty}(\mathcal{D}_{d}, w))
\leq 
\exp \left( - \frac{F_{K,Q}^{\mathrm{D}}(n)}{n}  \right). 
\end{align}
Therefore, we can regard the proposed formula in~\eqref{eq:ProposedApproxFormula} 
as nearly optimal if the exponents 
$F_{K,Q}^{\mathrm{C}}(n)/n$ and $F_{K,Q}^{\mathrm{D}}(n)/n$ are sufficiently close. 
However, we have not found their exact orders. 
Their precise estimates will be considered in future work. 
As a preliminary attempt for the estimate, 
we provide an upper bound of the difference $F_{K,Q}^{\mathrm{C}}(n) - F_{K,Q}^{\mathrm{D}}(n)$
by using the separation distance $h_{a^{\ast}}$ given by~\eqref{eq:sep_dist_a_ast}
in Appendix~\ref{sec:diff_FC_FD}. 
\end{rem}

%----------
\subsection{Barycentric forms of the proposed formula}

We can obtain some alternative forms of Formula~\eqref{eq:ProposedApproxFormula}
to reduce its computational cost. 
As such alternatives, 
we derive analogues of the barycentric formulas for 
the Lagrange interpolation 
\ifthenelse{\value{journal} = 0}{%
\cite{bib:BerrutTrefethen_bary_2004, bib:Trefethen_APTP_2013}. 
}{%else
\citep{bib:BerrutTrefethen_bary_2004, bib:Trefethen_APTP_2013}. 
}
Because they are categorized into two types, 
we derive two analogues. 
As shown below, 
the second one is derived only approximately. 

We begin with the first type. 
By letting 
\begin{align}
\lambda_{k}^{\ast} 
= \frac{1}{B_{n:k}(a_{k}^{\ast}; a^{\ast}, \mathcal{D}_{d})}
= \frac{1}{\prod_{j \neq k} T_{d}(a_{k}^{\ast} - a_{j}^{\ast})}
\qquad (k = 1,\ldots , n), 
\label{eq:def_lambda}
\end{align}
we have 
\begin{align}
L_{n}[a^{\ast}; f](x) 
& = 
w(x) 
\sum_{k=1}^{n} 
\frac{f(a_{k}^{\ast})}{w(a_{k}^{\ast})} \, 
\frac{\lambda_{k}^{\ast}}{T_{d}'(0)} \, 
B_{n:k}(x; a^{\ast}, \mathcal{D}_{d}) \, 
T_{d}'(x - a_{k}^{\ast}) \notag \\
& = 
w(x) 
\sum_{k=1}^{n} 
\frac{f(a_{k}^{\ast})}{w(a_{k}^{\ast})} \, 
\frac{\lambda_{k}^{\ast}}{T_{d}'(0)} 
\left( \prod_{j = 1}^{n} T_{d}(x - a_{j}^{\ast}) \right) 
\frac{T_{d}'(x - a_{k}^{\ast})}{T_{d}(x - a_{k}^{\ast})} \notag \\
& = 
w(x) \, B_{n}(x; a^{\ast}, \mathcal{D}_{d})
\sum_{k=1}^{n} 
\frac{f(a_{k}^{\ast})}{w(a_{k}^{\ast})} \, 
\frac{\lambda_{k}^{\ast}}{T_{d}'(0)} \, 
\frac{T_{d}'(x - a_{k}^{\ast})}{T_{d}(x - a_{k}^{\ast})} \notag \\
& = 
w(x) \, B_{n}(x; a^{\ast}, \mathcal{D}_{d})
\sum_{k=1}^{n} 
\frac{\lambda_{k}^{\ast}}{S_{d}(x - a_{k}^{\ast})} \, 
\frac{f(a_{k}^{\ast})}{w(a_{k}^{\ast})}, 
\label{eq:FirstBary}
\end{align}
where 
\begin{align}
S_{d}(x) = 
\frac{T_{d}'(0) \, T_{d}(x)}{T_{d}'(x)} 
= 
\frac{1}{2} \sinh \left( \frac{\pi}{2d} x \right). 
\label{eq:def_Sd}
\end{align}
Then, we regard Formula~\eqref{eq:FirstBary} as the analogue of the first barycentric formula. 

Next, we consider the second type. 
By letting $f = w$ in~\eqref{eq:FirstBary},  we have 
\begin{align}
L_{n}[a^{\ast}; w](x)
=
w(x) B_{n}(x; a^{\ast}, \mathcal{D}_{d})
\sum_{k=1}^{n} 
\frac{\lambda_{k}^{\ast}}{S_{d}(x - a_{k}^{\ast})}. 
\notag
\end{align}
Then, by noting that $L_{n}[a^{\ast}; w](x)$ is an approximation of $w(x)$, 
we have
\begin{align}
w(x) B_{n}(x; a^{\ast}, \mathcal{D}_{d}) 
=
L_{n}[a^{\ast}; w](x)
\Bigg/
\sum_{k=1}^{n} 
\frac{\lambda_{k}^{\ast}}{S_{d}(x - a_{k}^{\ast})}
\approx 
w(x)
\Bigg/
\sum_{k=1}^{n} 
\frac{\lambda_{k}^{\ast}}{S_{d}(x - a_{k}^{\ast})}.
\label{eq:approx_wB}
\end{align}
Therefore, by replacing the factor $w(x) B_{n}(x; a^{\ast}, \mathcal{D}_{d})$ 
in Formula~\eqref{eq:FirstBary} with the RHS of~\eqref{eq:approx_wB}, 
we obtain the approximate form of the formula as follows: 
\begin{align}
L_{n}[a^{\ast}; f](x) 
& \approx
w(x) 
\sum_{k=1}^{n} 
\frac{\lambda_{k}^{\ast}}{S_{d}(x - a_{k}^{\ast})} \, 
\frac{f(a_{k}^{\ast})}{w(a_{k}^{\ast})}
\Bigg/
\sum_{j=1}^{n} 
\frac{\lambda_{j}^{\ast}}{S_{d}(x - a_{j}^{\ast})}. 
\label{eq:SecondBary}
\end{align}
We denote the RHS of \eqref{eq:SecondBary} by $\tilde{L}_{n}[a^{\ast}; f](x)$. 
Then, we regard this formula as the analogue of the second barycentric formula.

%--------------------
\section{Numerical experiments}
\label{sec:num_algo}

We compute a numerical approximation of the minimizer $a^{\ast}$ of $I_{n}^{\mathrm{D}}$
by Newton's method shown in Figure~\ref{fig:Newton}. 
Recall that $H_{n}(a)$ is the Hessian of $I_{n}^{\mathrm{D}}$ at $a$. 
Let $\tilde{a}^{\ast} = (\tilde{a}_{1}^{\ast}, \ldots , \tilde{a}_{n}^{\ast}) \in \mathcal{R}_{n}$ 
denote the output of this algorithm. 
Then, 
in order to approximate $f(x)$, 
we use the barycentric formulas in~\eqref{eq:FirstBary} and~\eqref{eq:SecondBary}. 
Recall that their explicit forms are given by
\begin{align}
\text{\textbf{(I)}} \qquad 
L_{n}[\tilde{a}^{\ast}; f](x)
& = 
w(x) \left[ \prod_{j = 1}^{n} \tanh \left( \frac{\pi}{4d} (x - \tilde{a}_{j}^{\ast}) \right) \right]
\sum_{k = 1}^{n} \frac{2 \tilde{\lambda}_{k}^{\ast}}{\sinh \left( \frac{\pi}{2d} (x - \tilde{a}_{k}^{\ast}) \right)}
\frac{f(\tilde{a}_{k}^{\ast})}{w(\tilde{a}_{k}^{\ast})}, 
\label{eq:FirstBaryExplicit} \\
\text{\textbf{(II)}} \qquad 
\tilde{L}_{n}[\tilde{a}^{\ast}; f](x)
& = 
w(x)
\sum_{k = 1}^{n} \frac{2 \tilde{\lambda}_{k}^{\ast}}{\sinh \left( \frac{\pi}{2d} (x - \tilde{a}_{k}^{\ast}) \right)}
\frac{f(\tilde{a}_{k}^{\ast})}{w(\tilde{a}_{k}^{\ast})} 
\Bigg/
\sum_{j = 1}^{n} \frac{2 \tilde{\lambda}_{j}^{\ast}}{\sinh \left( \frac{\pi}{2d} (x - \tilde{a}_{j}^{\ast}) \right)},
\label{eq:SecondBaryExplicit} 
\end{align}
where
\begin{align}
\tilde{\lambda}_{k}^{\ast} = \prod_{j \neq k} \frac{1}{\tanh \left( \frac{\pi}{4d} (\tilde{a}_{k}^{\ast} - \tilde{a}_{j}^{\ast}) \right)}
\qquad (k = 1,\ldots , n). 
\end{align}

We used Matlab R2016b programs for the computations presented in this section. 
The sampling points and approximations
were computed by the programs with 
the double precision floating point numbers
and multi-precision numbers with 75 digits, respectively. 
For the multi-precision numbers, we used the Multiprecision Computing Toolbox for Matlab, 
produced by Advanpix (\url{http://www.advanpix.com}, last accessed on December 18, 2017).
These programs used for the computations are available on the web page \cite{bib:Tanaka_MatlabCodes_2018}.

\begin{figure}[ht]
\centering
\framebox{
\begin{minipage}{0.35\linewidth}
%\begin{screen}
\noindent
Initialize $a \in \mathcal{R}_{n}$ \\

\noindent
\textbf{do} \\
\phantom{aaaa} $\delta :=  - (H_{n}(a))^{-1} \, \nabla I_{n}^{\mathrm{D}}(a)$ \\
\phantom{aaaa} $a := a + \delta$ \\
\phantom{aaaa} $\epsilon := \max\{ |\delta_{1}|, \ldots, |\delta_{n}| \}$ \\
\textbf{while} \ $\epsilon \geq 10^{-14}$ \\

\noindent
Output $a$
%\end{screen}
\end{minipage}
}
\caption{Newton's method for finding the minimizer of $I_{n}^{\mathrm{D}}$}
\label{fig:Newton}
\end{figure}

%--------------------------------------------------
\subsection{Comparison with the previous formulas in \cite{bib:TaOkaSu_PotApprox_2017}}
\label{sec:VS_old_formulas}

We begin with the comparison of Formula (I) with the previous formula in \cite{bib:TaOkaSu_PotApprox_2017}. 
Because their difference is the method for generating the sampling points, 
we compare the accuracy of the formulas $L_{n}[\tilde{a}^{\ast}; f]$ and $L_{n}[\tilde{a}^{\mathrm{old}}; f]$, 
where $\tilde{a}^{\mathrm{old}} = \{ \tilde{a}_{j}^{\mathrm{old}} \}$ is the sampling points generated by the method in \cite{bib:TaOkaSu_PotApprox_2017}. 
To this end, we choose the functions and weights listed in Table~\ref{tab:func_weight}, 
which are the same as those used in \cite{bib:TaOkaSu_PotApprox_2017}. 
Each weight $w_{i}$ satisfies Assumptions~\ref{assump:w} and~\ref{assump:w_convex} for $d = \pi/4 - \varepsilon$ with $0 < \varepsilon \ll 1$, 
and each function $f_{i}$ satisfies $f_{i} \in \boldsymbol{H}^{\infty}(\mathcal{D}_{\pi/4- \varepsilon}, w_{i})$ for the corresponding weight $w_{i}$. 
In the following, we adopt $\varepsilon = 10^{-10}$. 

For computing the errors of the formulas, 
we choose $1001$ evaluation points $x_{\ell} \subset \mathbf{R}$ 
and adopt the value $\max_{\ell}| f(x_{\ell}) - (\text{the value of the approximant at } x_{\ell}) |$ as the error. 
First we find a value of $x_{1} \leq 0$ satisfying that
\begin{align}
w_{i}(x_{1}) \leq 
\begin{cases}
10^{-20} & (i = 1), \\
10^{-30} & (i = 2), \\
10^{-75} & (i = 3), 
\end{cases}
\notag 
\end{align}
and then determine the points $x_{\ell}$ by $x_{1001} = -x_{1}$ and 
\begin{align}
\quad x_{\ell} = x_{1} + \frac{x_{1001} - x_{1}}{1000} (\ell-1) \quad (\ell = 2,\ldots, 1000). 
\label{eq:eval_points}
\end{align}
We adopt $x_{1} = -25, -10$ and $-3$ for the computations of $f_{1}$, $f_{2}$ and $f_{3}$, respectively. 

First, 
we present the computed sampling points $\tilde{a}^{\ast}$ and 
functions $U^{\mathrm{D}}(\tilde{a}^{\ast}; x) + Q(x)$ in Figures~\ref{fig:samp_SE}--\ref{fig:samp_DE}. 
Next, 
we present the computed errors in Figures~\ref{fig:func_SE_with_New}--\ref{fig:func_DE_with_New}. 
From these results, 
we can observe that the computed sampling points are very close to those by the previous method 
and the proposed formulas are competitive with the previous formulas.

\begin{rem}
\label{rem:ex_func_on_entire_R}
The functions $f_{1}$ and $f_{3}$ in Table~\ref{tab:func_weight} 
can be derived from the function
\begin{align}
g_{0}(t) = \frac{1}{\sqrt{1 + t^{2}}}
\label{eq:ex_func_on_entire_R}
\end{align}
by the variable transformations 
\begin{align*}
t = \sinh(2x) 
\qquad \text{and} \qquad
t = \sinh \left( \frac{\pi}{2} \sinh (2x) \right), 
\end{align*}
respectively. 
Therefore, 
the approximations of $f_{1}$ and $f_{3}$ by the proposed method
can be regarded as the approximations of $g_{0}$ in \eqref{eq:ex_func_on_entire_R} 
on the entire real line $\mathbf{R}$
via these transformations. 
Thus, 
the proposed method can provide formulas for approximating functions 
such as $g_{0}$ with algebraic decay on $\mathbf{R}$, 
which are discussed in 
\ifthenelse{\value{journal} = 0}{%
\cite[\S 17.9]{bib:Boyd_spec_book2_2000}.
}{%else
\citet[\S 17.9]{bib:Boyd_spec_book2_2000}.
}
\end{rem} 

\begin{table}[H]
\begin{center}
\caption{The functions and weights used in {\cite{bib:TaOkaSu_PotApprox_2017}}.}
\label{tab:func_weight}
\begin{tabular}{l | l}
$f_{i} \in \boldsymbol{H}^{\infty}(\mathcal{D}_{\pi/4- \varepsilon}, w_{i})$ & $w_{i}$ \\
\hline
1.\quad $f_{1}(x) = \mathop{\mathrm{sech}}(2x)$ & $w_{1}(x) = \mathop{\mathrm{sech}}(2x)$ \\
2.\quad $f_{2}(x) = \dfrac{x^2}{(\pi/4)^2 + x^2}\, \mathrm{e}^{-x^{2}} $ & $w_{2}(x) = \mathrm{e}^{-x^{2}}$ \\
3.\quad $f_{3}(x) = \mathop{\mathrm{sech}}((\pi/2) \sinh(2x))$ & $w_{3}(x) = \mathop{\mathrm{sech}}((\pi/2) \sinh(2x))$  
\end{tabular}
\end{center}
\end{table}

%\begin{comment}
%----------
\begin{figure}[ht]
\noindent
\begin{center}
\begin{minipage}[t]{0.49\linewidth}
\begin{center}
\includegraphics[width=\linewidth]{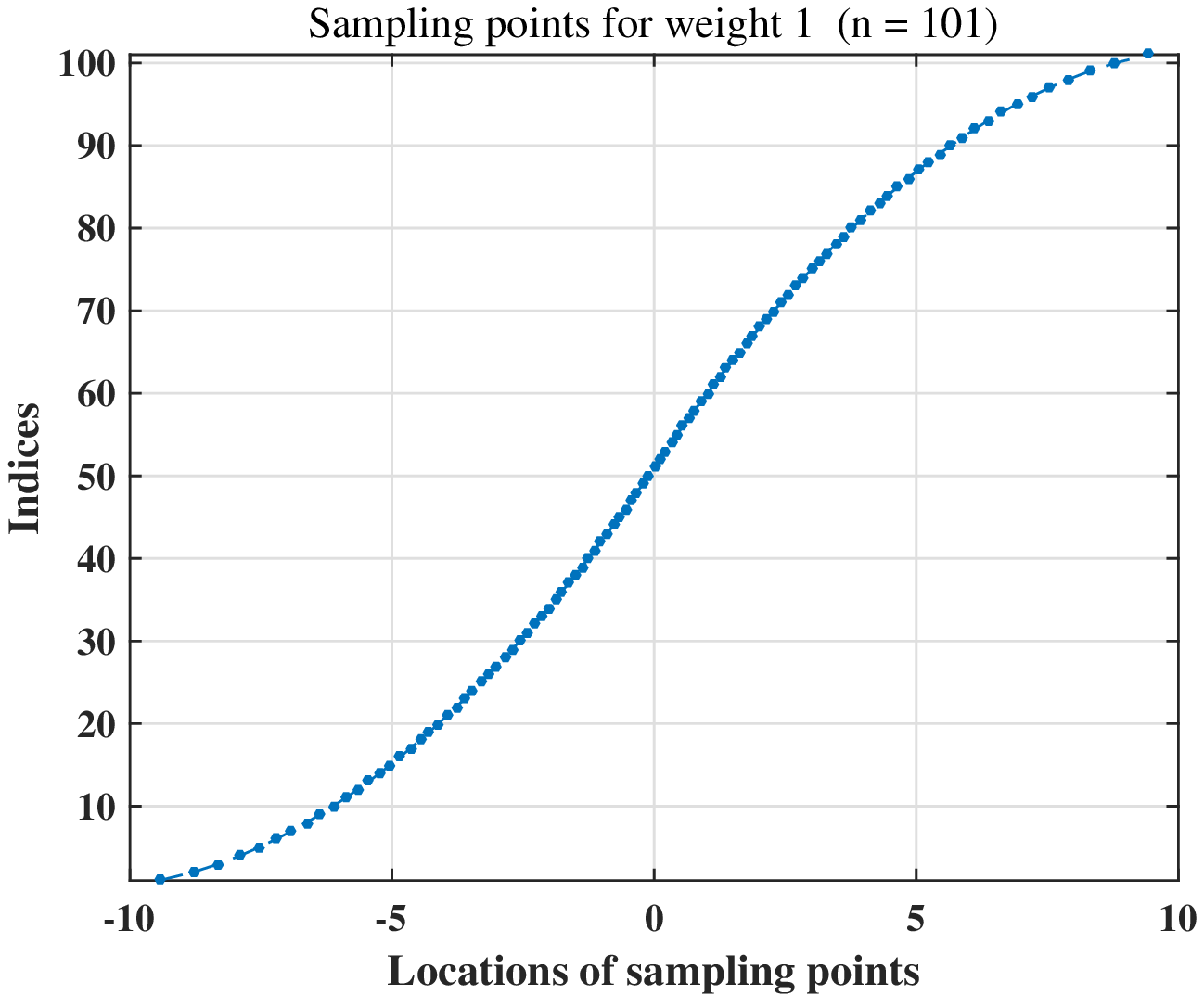}
(a) Sampling points $\{ \tilde{a}_{j}^{\ast} \}$
\end{center}
\end{minipage}
\end{center}
\noindent
\begin{minipage}[t]{0.49\linewidth}
\begin{center}
\includegraphics[width=\linewidth]{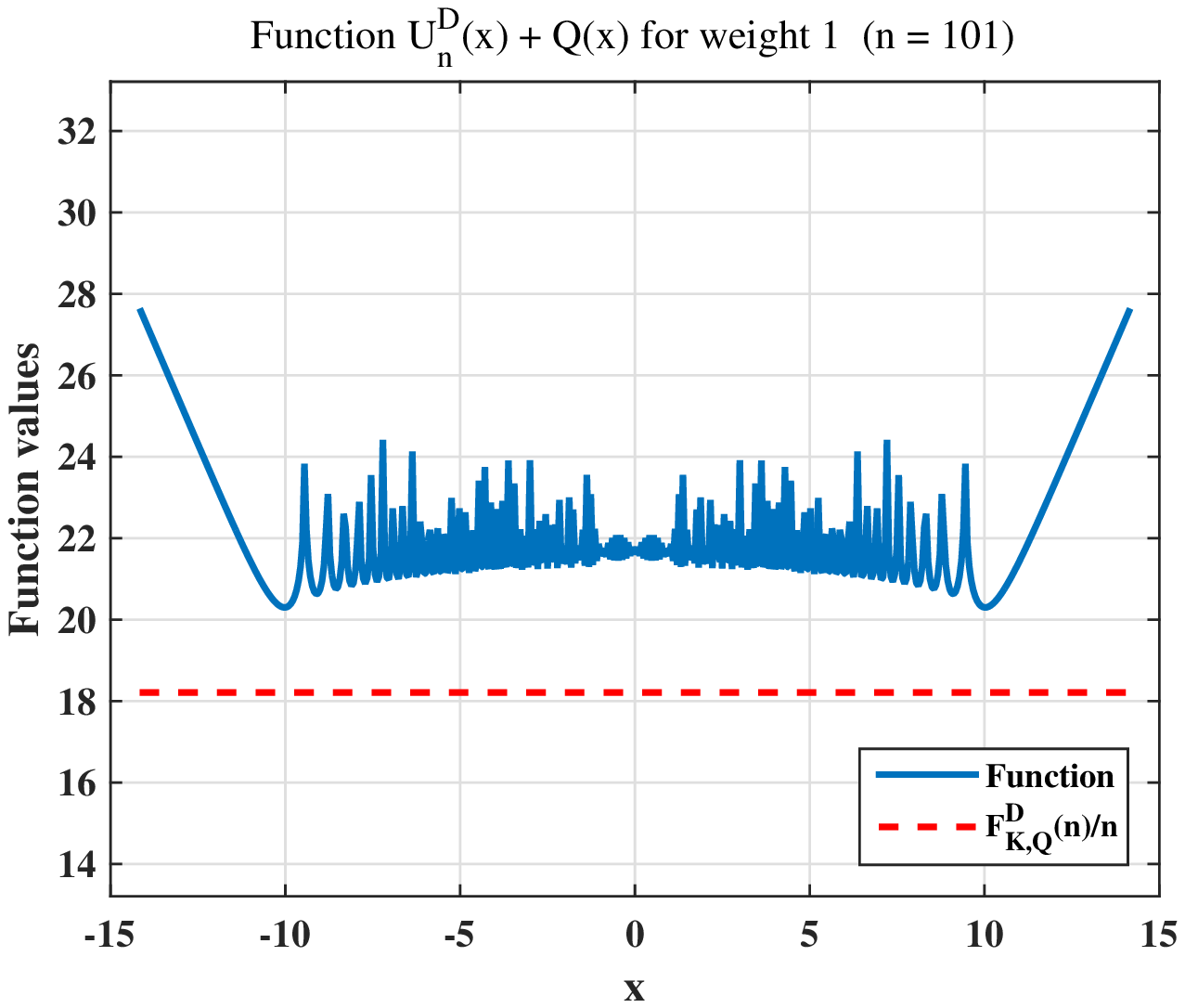}
(b) Function $U_{n}^{\mathrm{D}}(\tilde{a}^{\ast};x) + Q(x)$
\end{center}
\end{minipage}
\begin{minipage}[t]{0.49\linewidth}
\begin{center}
\includegraphics[width=\linewidth]{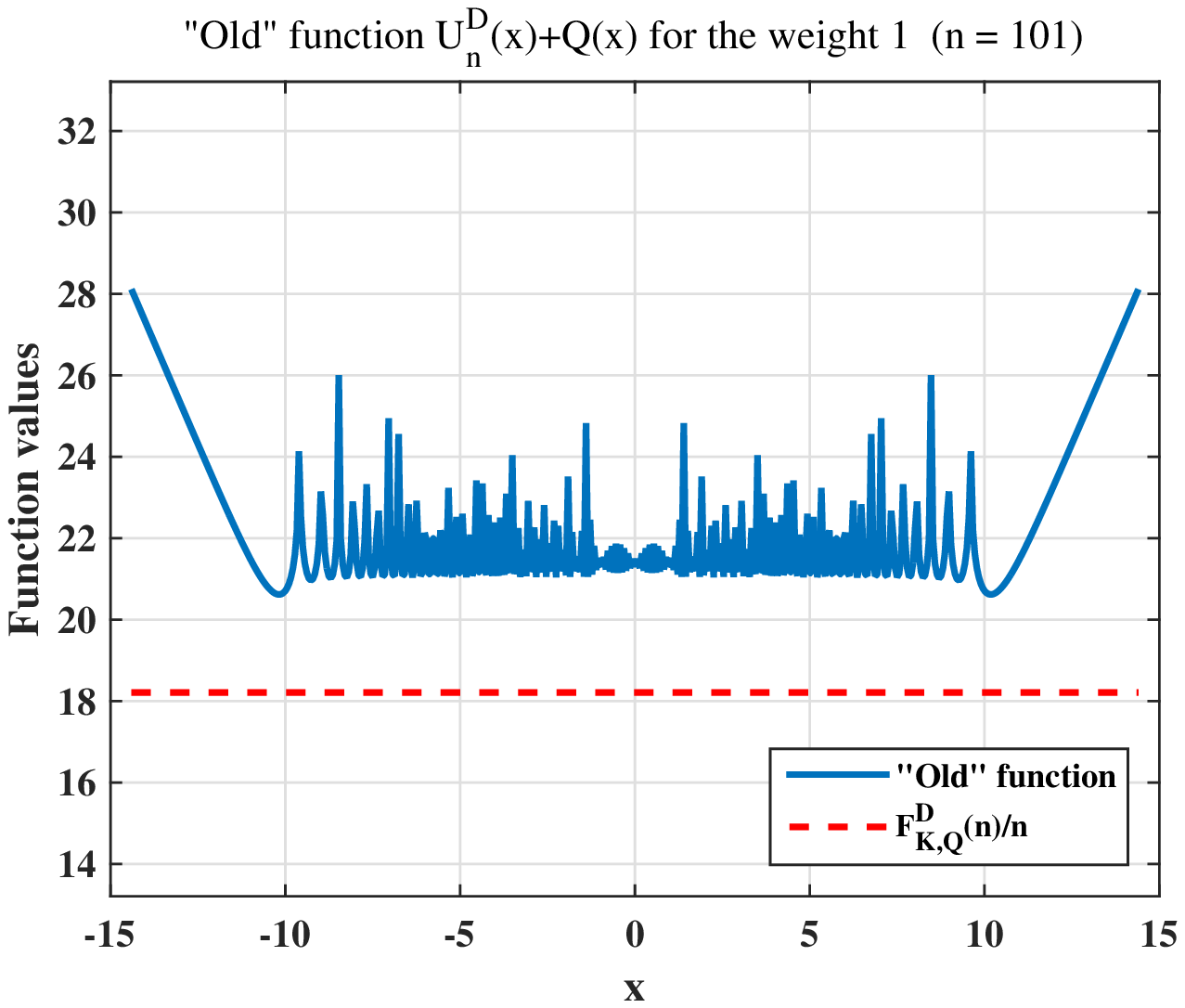}
(c) Function $U_{n}^{\mathrm{D}}(\tilde{b}^{\ast};x) + Q(x)$ 
for the sampling points $\{ \tilde{b}_{j}^{\ast} \}$ by the method in \cite{bib:TaOkaSu_PotApprox_2017}
\end{center}
\end{minipage}
\bigskip
\caption{Results for the sampling points for weight 1 ($w_{1}$) in Table~\ref{tab:func_weight} and $n = 101$.}
\label{fig:samp_SE}
\end{figure}

%----------
\begin{figure}[ht]
\noindent
\begin{center}
\begin{minipage}[t]{0.49\linewidth}
\begin{center}
\includegraphics[width=\linewidth]{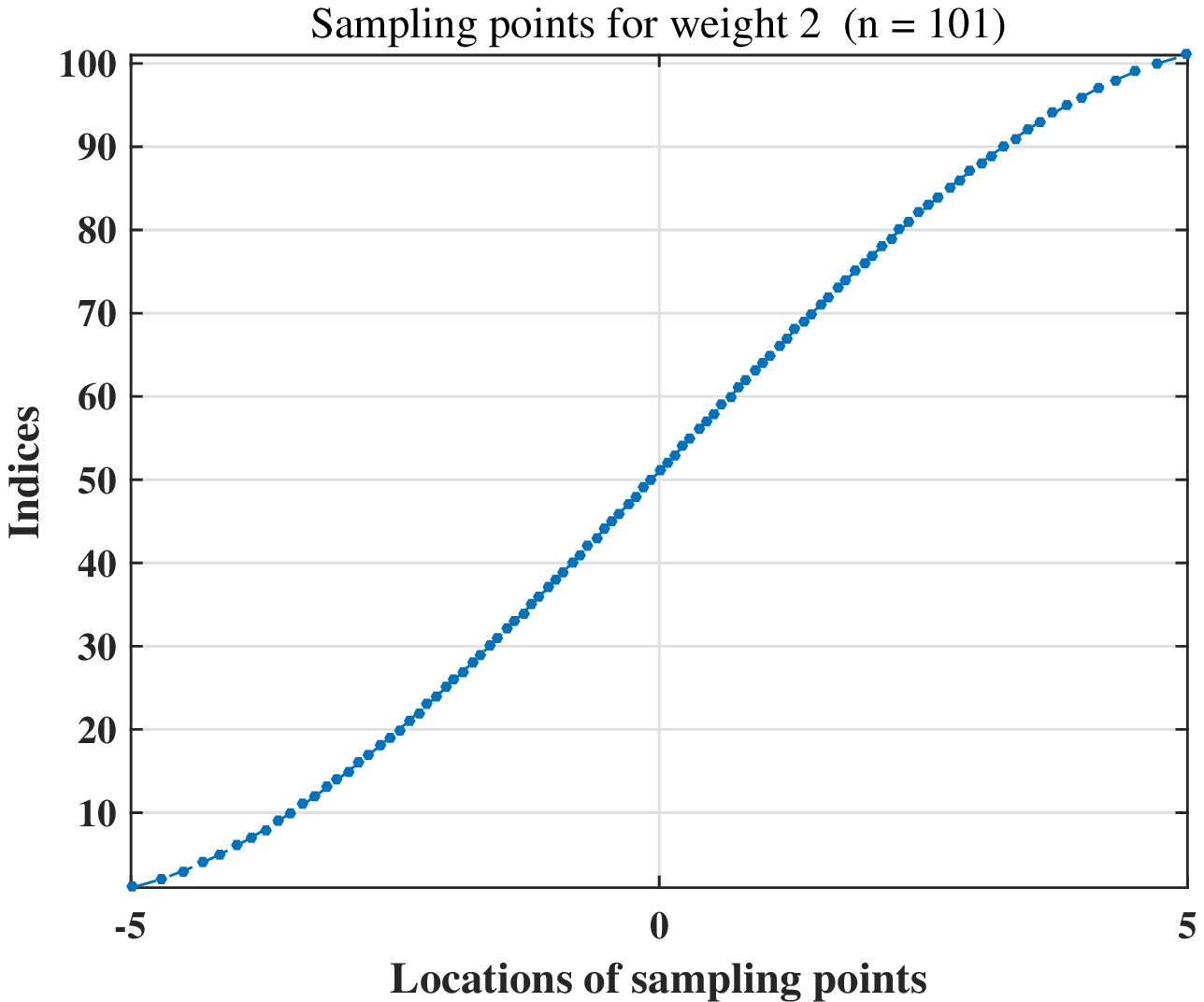}
(a) Sampling points $\{ \tilde{a}_{j}^{\ast} \}$
\end{center}
\end{minipage}
\end{center}
\noindent
\begin{minipage}[t]{0.49\linewidth}
\begin{center}
\includegraphics[width=\linewidth]{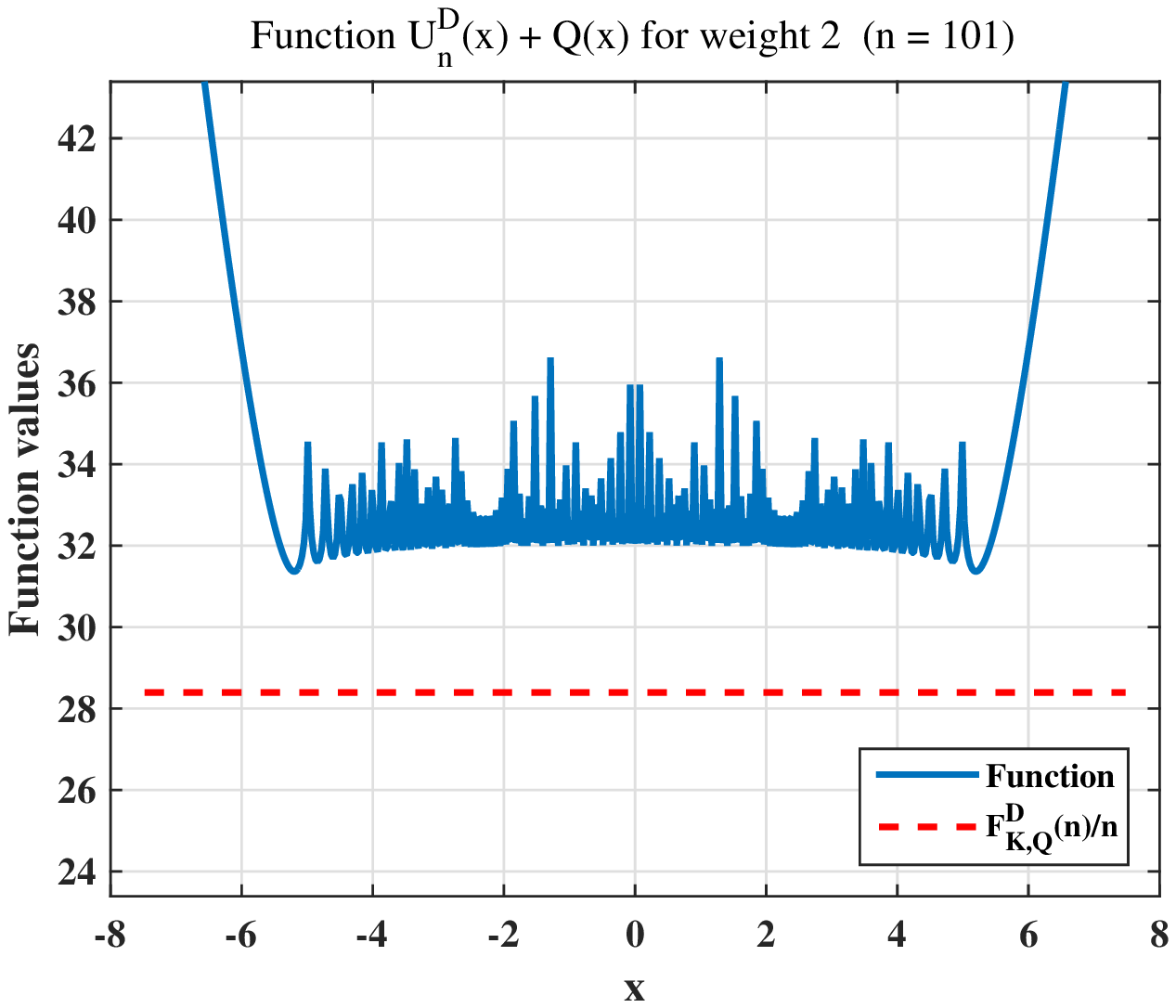}
(b) Function $U_{n}^{\mathrm{D}}(\tilde{a}^{\ast};x) + Q(x)$. 
\end{center}
\end{minipage}
\begin{minipage}[t]{0.49\linewidth}
\begin{center}
\includegraphics[width=\linewidth]{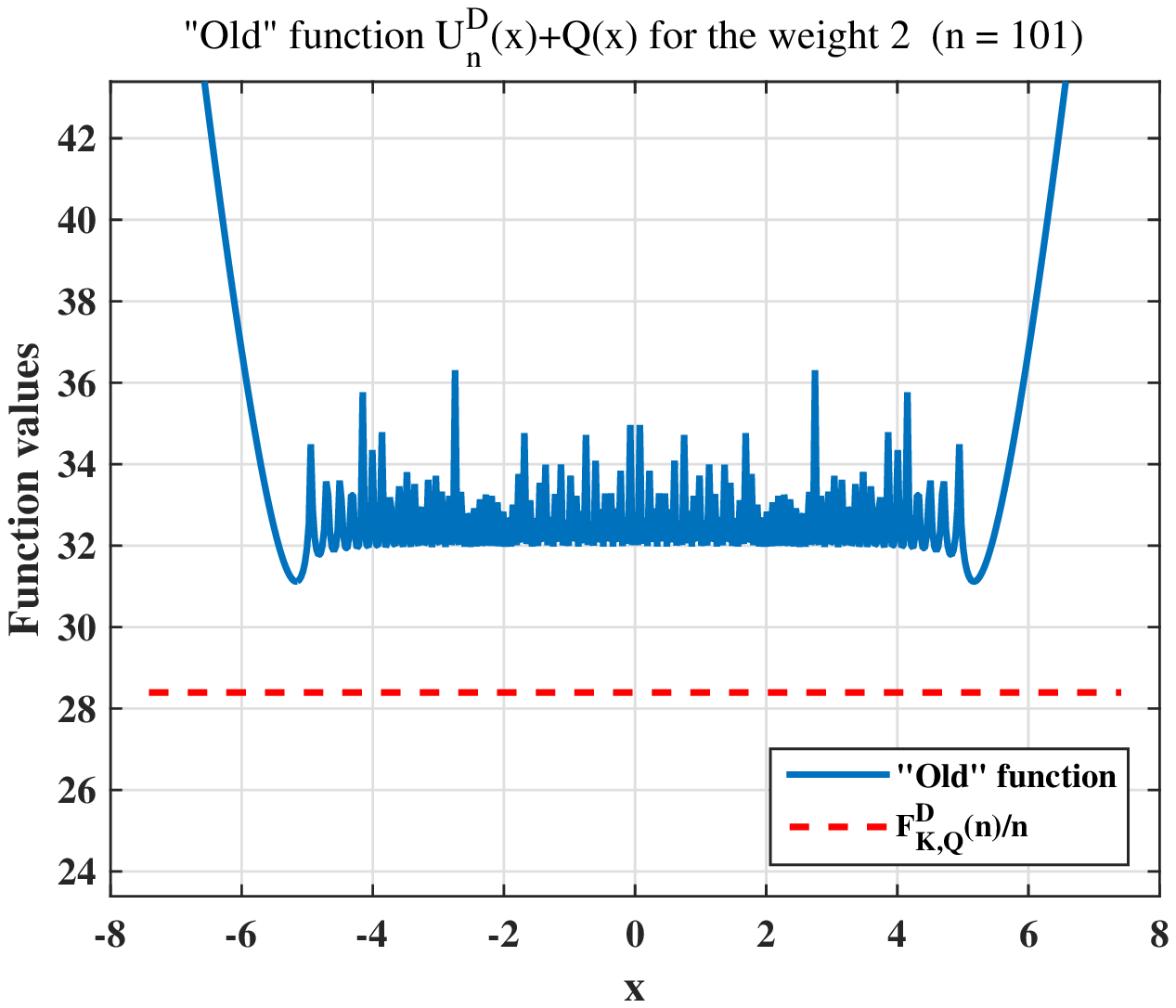}
(c) Function $U_{n}^{\mathrm{D}}(\tilde{b}^{\ast};x) + Q(x)$ 
for the sampling points $\{ \tilde{b}_{j}^{\ast} \}$ by the method in \cite{bib:TaOkaSu_PotApprox_2017}
\end{center}
\end{minipage}
\bigskip
\caption{Results for the sampling points for weight 2 ($w_{2}$) in Table~\ref{tab:func_weight} and $n = 101$.}
\label{fig:samp_Ga}
\end{figure}

%----------
\begin{figure}[ht]
\noindent
\begin{center}
\begin{minipage}[t]{0.49\linewidth}
\begin{center}
\includegraphics[width=\linewidth]{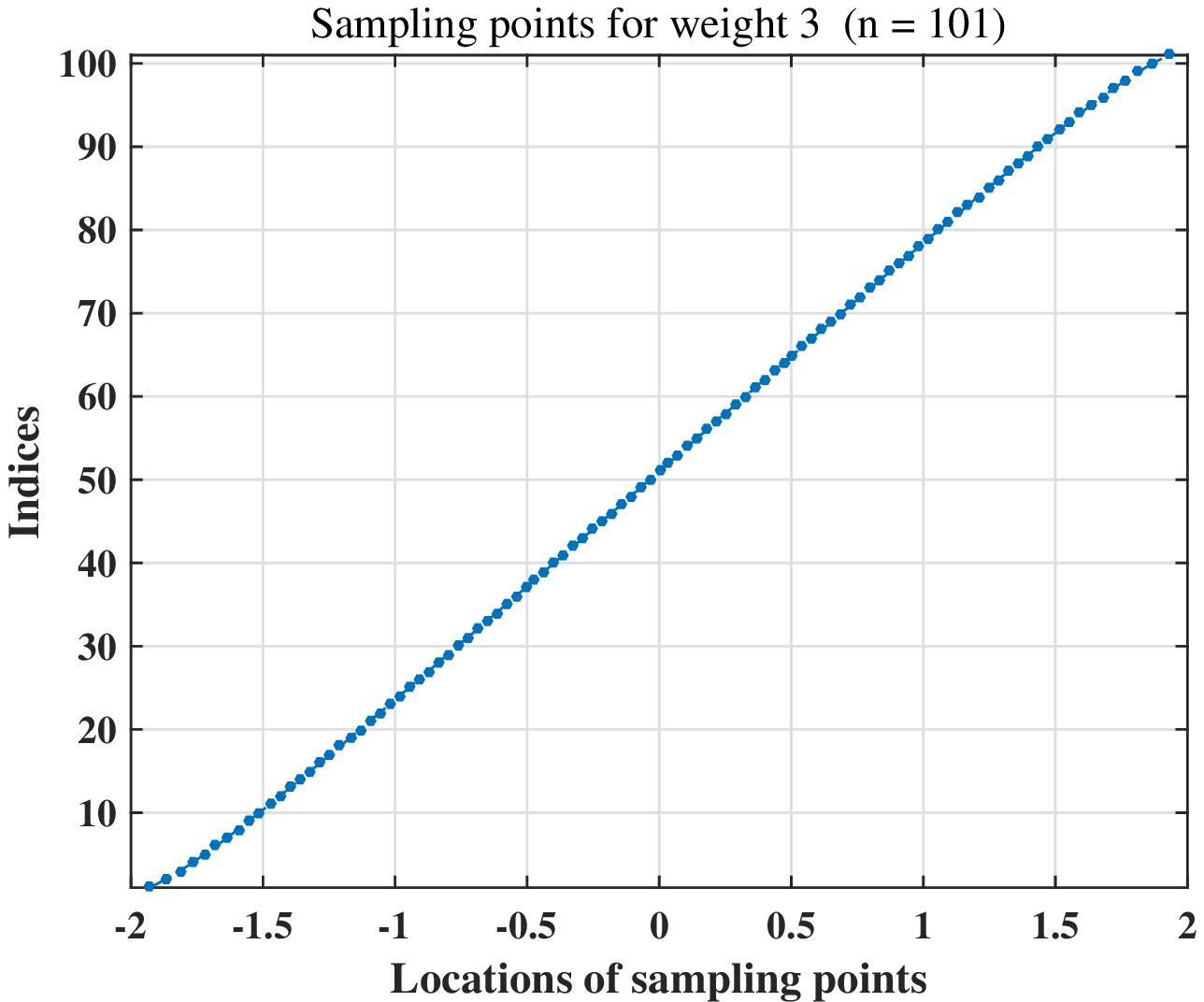}
(a) Sampling points $\{ \tilde{a}_{j}^{\ast} \}$
\end{center}
\end{minipage}
\end{center}
\noindent
\begin{minipage}[t]{0.49\linewidth}
\begin{center}
\includegraphics[width=\linewidth]{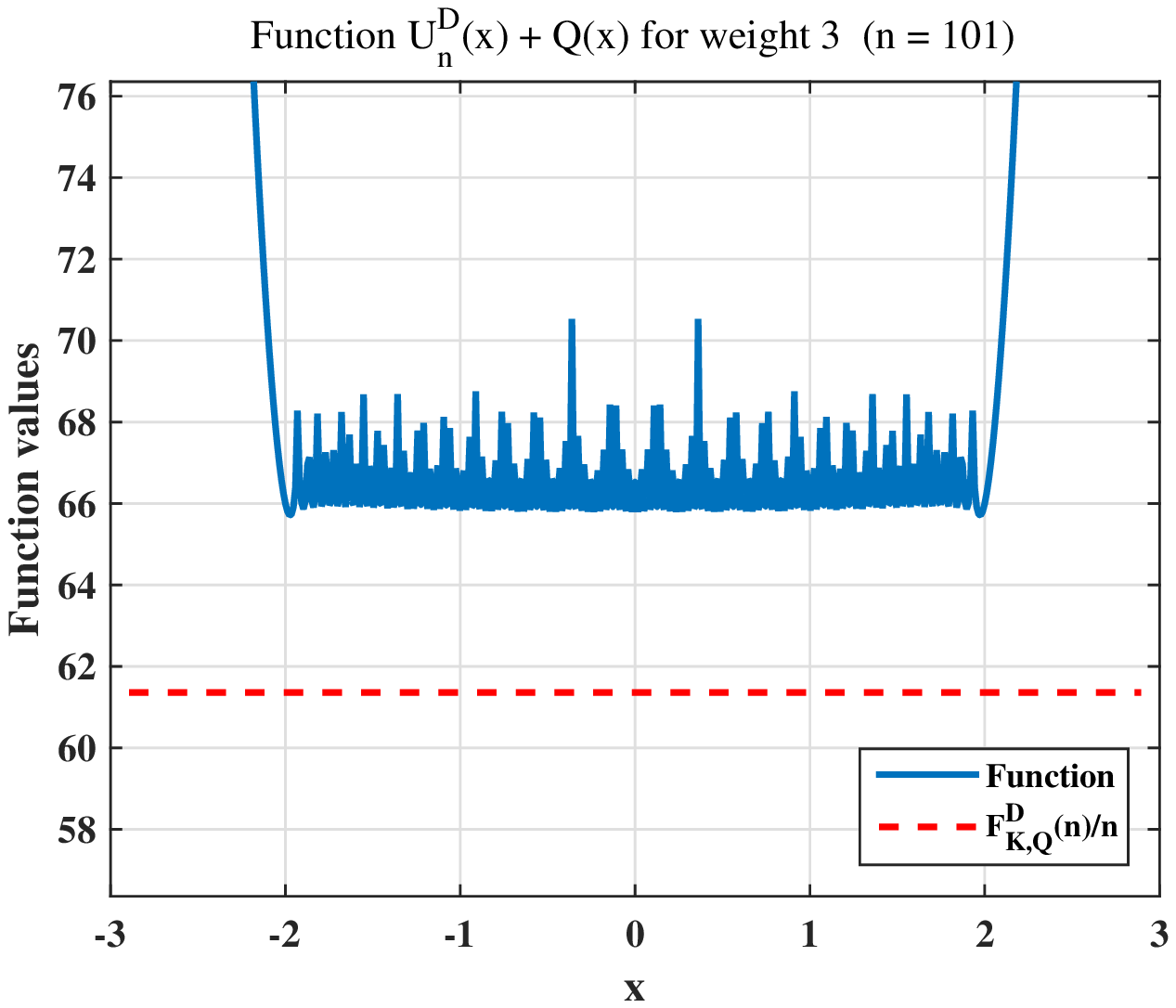}
(b) Function $U_{n}^{\mathrm{D}}(\tilde{a}^{\ast};x) + Q(x)$
\end{center}
\end{minipage}
\begin{minipage}[t]{0.49\linewidth}
\begin{center}
\includegraphics[width=\linewidth]{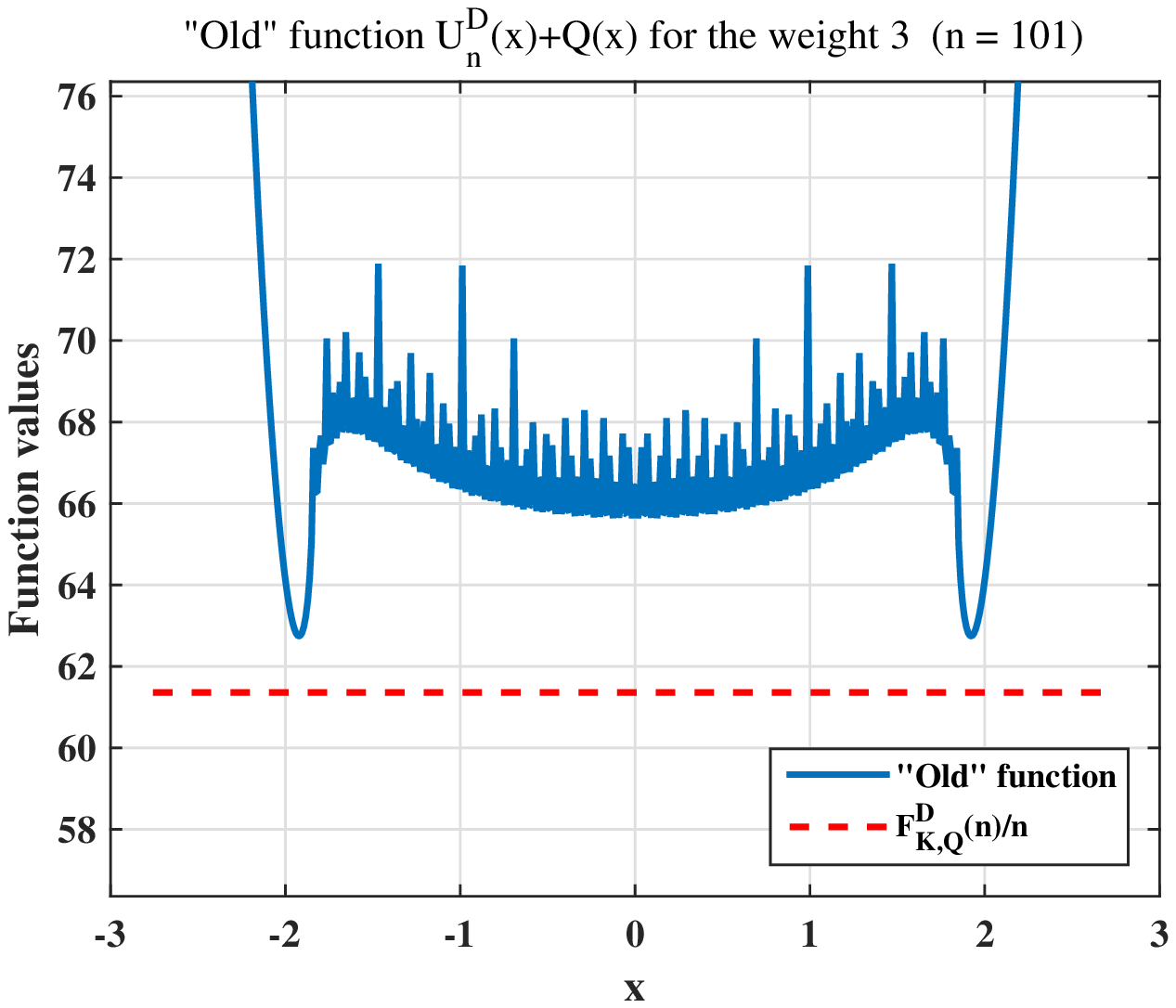}
(c) Function $U_{n}^{\mathrm{D}}(\tilde{b}^{\ast};x) + Q(x)$ 
for the sampling points $\{ \tilde{b}_{j}^{\ast} \}$ by the method in \cite{bib:TaOkaSu_PotApprox_2017}
\end{center}
\end{minipage}
\bigskip
\caption{Results for the sampling points for weight 3 ($w_{3}$) in Table~\ref{tab:func_weight} and $n = 101$.}
\label{fig:samp_DE}
\end{figure}

%----------
\begin{figure}[ht]
\begin{minipage}[t]{0.48\linewidth}
\begin{center}
\includegraphics[width=\linewidth]{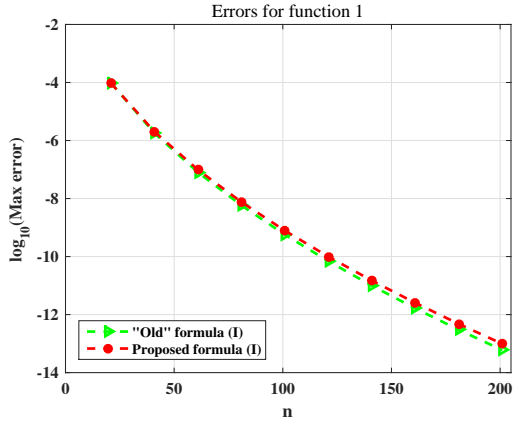}
\end{center}
\caption{Errors for function 1 ($f_{1}$) in Table~\ref{tab:func_weight} . ``Old'' formula refers to that in \cite{bib:TaOkaSu_PotApprox_2017}.}
\label{fig:func_SE_with_New}
\end{minipage}
\quad
\begin{minipage}[t]{0.48\linewidth}
\begin{center}
\includegraphics[width=\linewidth]{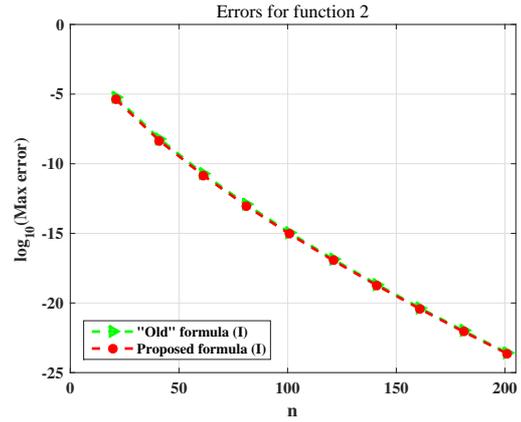}
\end{center}
\caption{Errors for function 2 ($f_{2}$) in Table~\ref{tab:func_weight}. ``Old'' formula refers to that in \cite{bib:TaOkaSu_PotApprox_2017}.}
\label{fig:func_Ga_with_New}
\end{minipage}
\end{figure}

\begin{figure}[ht]
\begin{center}
\includegraphics[width=0.5\linewidth]{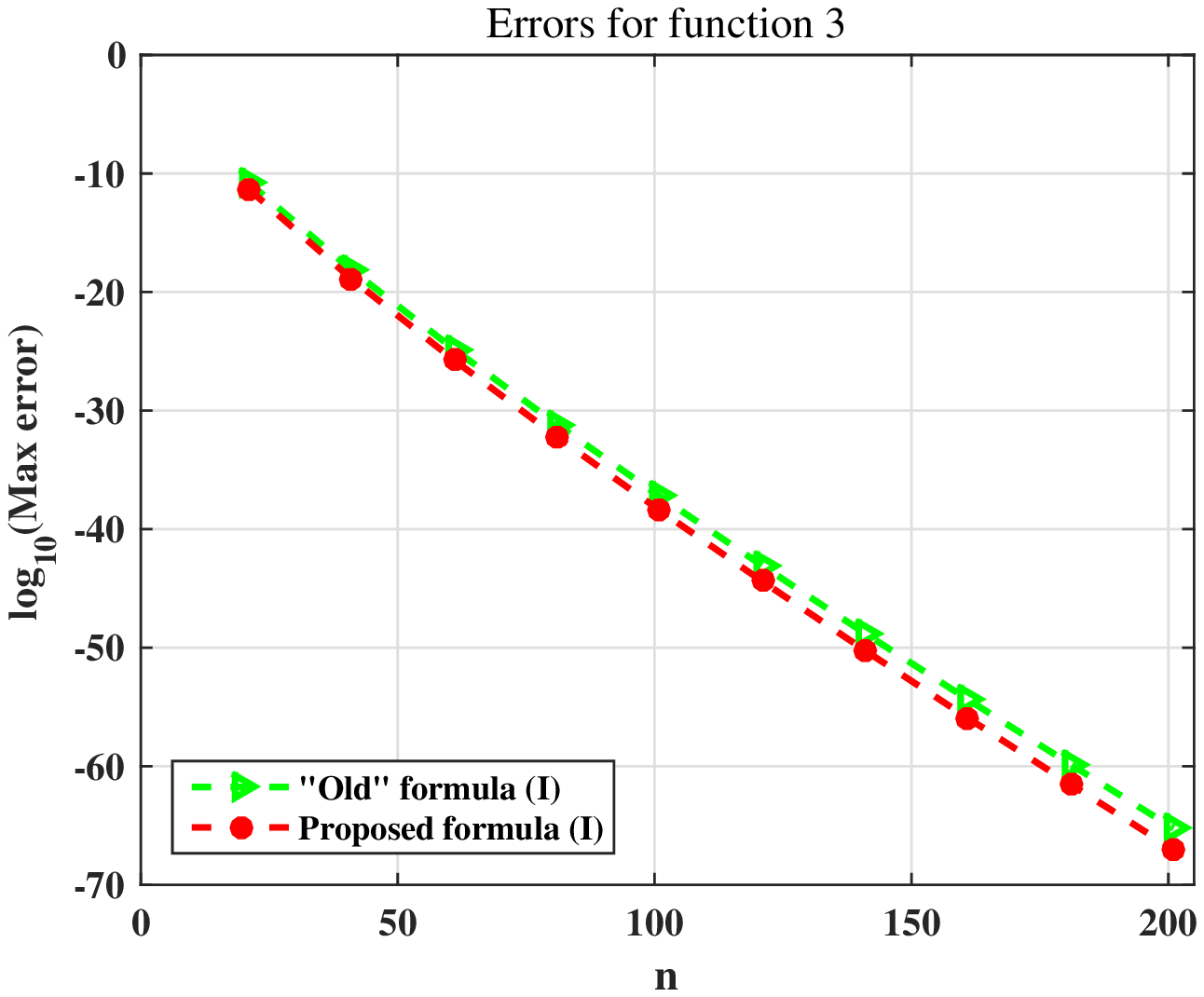}
\end{center}
\caption{Errors for function 3 ($f_{3}$) in Table~\ref{tab:func_weight}. ``Old'' formula refers to that in \cite{bib:TaOkaSu_PotApprox_2017}.}
\label{fig:func_DE_with_New}
\end{figure}
%-----
%\end{comment}

%--------------------------------------------------
\subsection{Comparison with the sinc interpolations with transformations}
\label{sec:VS_Sinc}

Next, we compare Formulas (I), (II) and the sinc interpolation with a transformation. 
We consider the function $g_{1}$ given by
\begin{align}
g_{1}(t) =  \sqrt{1 - t^{2}} \, (1+t^{2})
\end{align}
and its approximation for $t \in (-1,1)$. 
The function $g_{1}$ has the singularities at the endpoints $t = \pm 1$. 
In order to mitigate the difficulty in approximating $g$ at their neighborhoods, 
we employ the variable transformations given by $\psi_{1}$ and $\psi_{2}$ 
in \eqref{eq:TANHtrans} and~\eqref{eq:DEtrans}, respectively. 
Then, we consider the approximations of the transformed functions
\begin{align}
& f_{4}(x) = g_{1}(\psi_{1}(x)) = w_{4}(x) \left( 1 + \tanh \left( \frac{x}{2} \right)^{2} \right), \label{eq:f_TANHtrans} \\
%& f_{2}(x) = g_{1}(\psi_{2}(x)) = w_{2}(x) \left( 1 + \mathop{\mathrm{erf}}(x)^{2} \right),  \label{eq:f_ERFtrans} \\
& f_{5}(x) = g_{1}(\psi_{2}(x)) = w_{5}(x) \left( 1 + \tanh \left( \frac{\pi}{2} \sinh x \right)^{2} \right) \label{eq:f_DEtrans}
\end{align}
for $x \in \mathbf{R}$, where
\begin{align}
& w_{4}(x) = \mathop{\mathrm{sech}} \left( \frac{x}{2} \right),  \label{eq:w_TANHtrans} \\
%& w_{2}(x) = \sqrt{ 1 - \mathop{\mathrm{erf}}(x)^{2} }, \label{eq:w_ERFtrans} \\
& w_{5}(x) = \mathop{\mathrm{sech}} \left( \frac{\pi}{2} \sinh x \right). \label{eq:w_DEtrans} 
\end{align}
By letting 
\begin{align}
d_{4} = \pi - \varepsilon, 
%\qquad d_{2} = 0.7, 
\qquad d_{5} = \frac{\pi}{2} - \varepsilon
\label{eq:d_i}
\end{align}
with $0 < \varepsilon \ll 1$, for $i = 4,5$, 
we can confirm that the weight function $w_{i}$ satisfies
Assumptions~\ref{assump:w} and~\ref{assump:w_convex} for $d = d_{i}$. 
Furthermore, the assertion $f_{i} \in \boldsymbol{H}^{\infty}(\mathcal{D}_{d_{i}}, w_{i})$ holds true for $i = 4,5$. 
In the following, we adopt $\varepsilon = 10^{-10}$. 

For the functions $f_{4}$ and $f_{5}$, 
we compare Formulas (I), (II), and the sinc interpolation formula~\eqref{eq:trans_sinc_approx}:
\begin{align}
f(x) \approx \sum_{k = -N_{-}}^{N_{+}} f(kh) \, \mathop{\mathrm{sinc}}(x/h - k). 
\label{eq:trans_sinc_approx_revisited}
\end{align}
We need to determine the parameters $N_{\pm}$ and $h$ in this formula. 
Since the weights $w_{i}$ are even, 
we consider odd $n$ as the numbers of the sampling points 
and set $N_{-} = N_{+} = (n-1)/2$. 
Furthermore, we adopt the width $h > 0$ 
so that the orders of the sampling error and truncation error (almost) coincide
\ifthenelse{\value{journal} = 0}{%
\cite{bib:Sugihara_NearOpt_2003, bib:TaSuMu_DE_Sinc_2009}. 
}{%else
\citep{bib:Sugihara_NearOpt_2003, bib:TaSuMu_DE_Sinc_2009}. 
}
Actually, 
the former is $\mathrm{O}(\exp(-\pi d_{i}/h)) \ (h \to 0)$ and 
the latter is estimated depending on the weight $w_{i}$ as follows:
\begin{align}
\sum_{|k| > (n-1)/2} | f_{i}(kh) |
=
\begin{cases}
\mathrm{O}( \exp(-nh/4) ) & (i = 4), \\
%\mathrm{O}( \exp(-(nh)^2/8) ) & (w = w_{2}), \\
\mathrm{O}( \exp(-(\pi/4) \exp(nh/2)) ) & (i = 5),
\end{cases}
\qquad (n \to \infty).
\notag
\end{align}
Then, we set
\begin{align}
h = 
\begin{cases}
(4 \pi (\pi - \varepsilon) / n)^{1/2} &  (i = 4), \\
2 n^{-1} \log ((\pi - 2\varepsilon) n) & (i = 5).
\end{cases}
\notag
\end{align}

We choose the evaluation points $x_{\ell}$ in a similar manner to that of Section~\ref{sec:VS_old_formulas}. 
We find a value of $x_{1} \leq 0$ satisfying that
\begin{align}
w_{i}(x_{1}) \leq 
\begin{cases}
10^{-20} & (i = 4), \\
10^{-75} & (i = 5), 
\end{cases}
\notag 
\end{align}
and determine the points $x_{\ell}$ by $x_{1001} = -x_{1}$ and~\eqref{eq:eval_points}. 
We adopt $x_{1} = -100$ and $-6$ for the computations of $f_{4}$ and $f_{5}$, respectively. 

We show the errors of Formulas (I), (II) and sinc formula 
for $f_{4}$ and $f_{5}$ in Figures~\ref{fig:func_w4_even_se} and~\ref{fig:func_w5_even_de}, respectively. 
We can observe that Formulas (I) and (II) have approximately the same accuracy
and they outperform the sinc formula for each case.  

%\begin{comment}
%-----
\begin{figure}[ht]
\begin{minipage}[t]{0.48\linewidth}
\begin{center}
\includegraphics[width=\linewidth]{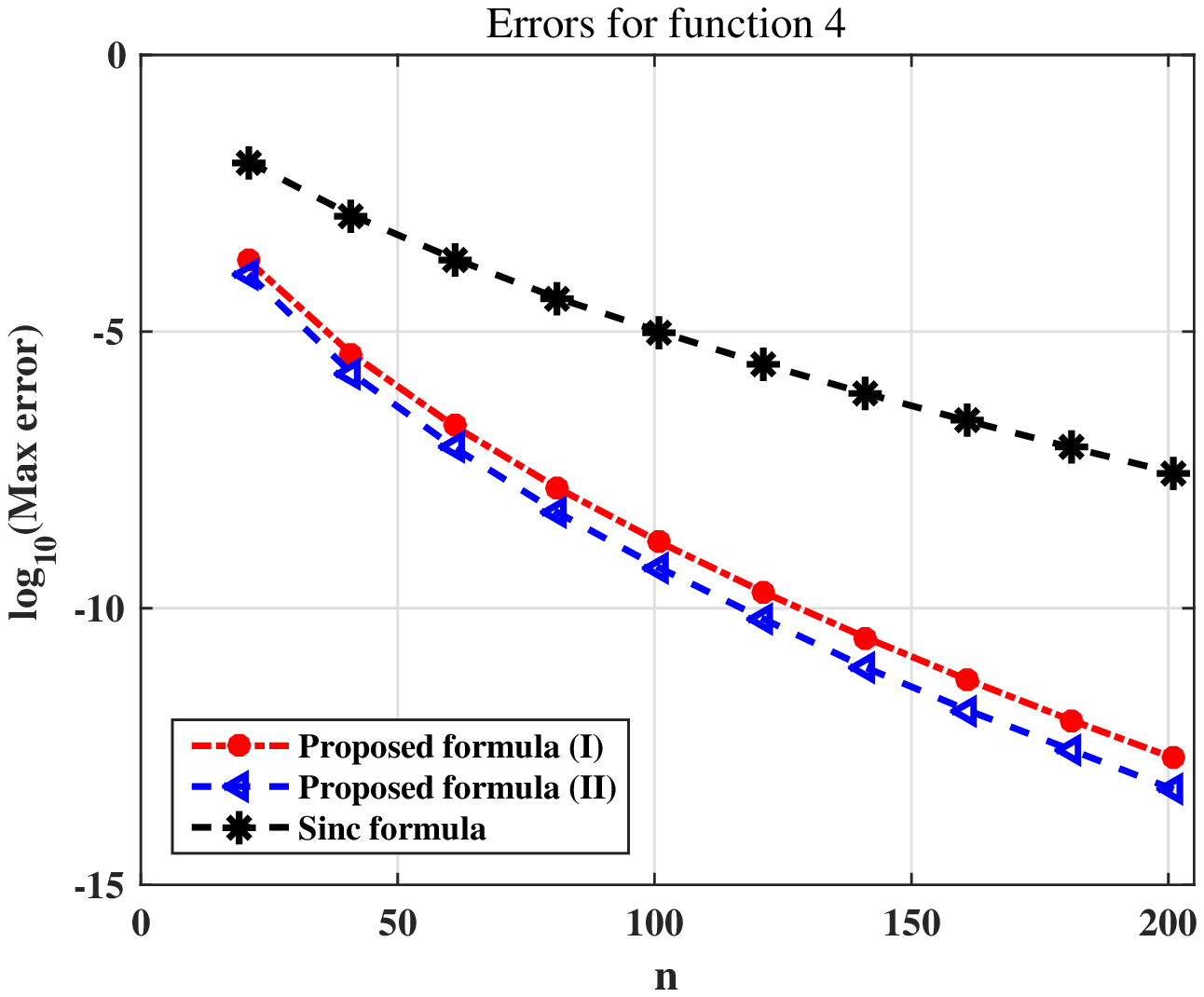}
\end{center}
\caption{Errors for function 4 ($f_{4}$). }
\label{fig:func_w4_even_se}
\end{minipage}
\quad
\begin{minipage}[t]{0.48\linewidth}
\begin{center}
\includegraphics[width=\linewidth]{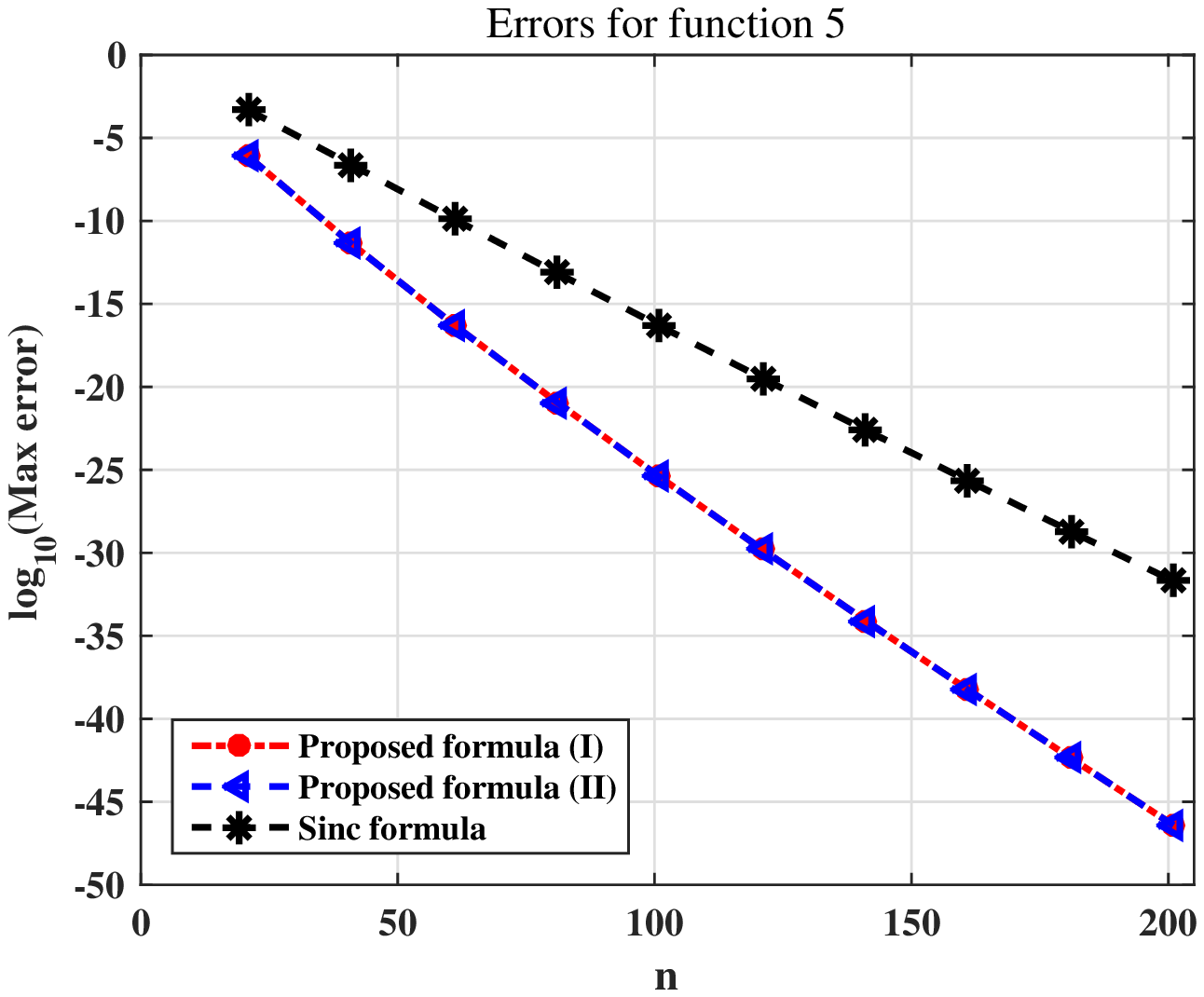}
\end{center}
\caption{Errors for function 5 ($f_{5}$). }
\label{fig:func_w5_even_de}
\end{minipage}
\end{figure}
%-----
%\end{comment}

%--------------------------------------------------
\subsection{Uneven weight functions}
\label{sec:Uneven}

Finally, we approximate functions with uneven weights, to which the method in \cite{bib:TaOkaSu_PotApprox_2017} cannot be applied. 
To this end, we consider the function $g_{2}$ given by 
\begin{align}
g_{2}(t) = (1-t)^{1/2} (1+t)^{3/2} (1+t^{2})
\end{align}
and its approximation for $t \in (-1,1)$. 
In a similar manner to that in Section~\ref{sec:VS_Sinc}, 
we consider the transformed functions
\begin{align}
& f_{6}(x) = g_{2}(\psi_{1}(x)) = w_{6}(x) \cdot 4 \left( 1 + \tanh \left( \frac{x}{2} \right)^{2} \right), \label{eq:un_f_TANHtrans} \\
%& f_{5}(x) = g_{2}(\psi_{2}(x)) = w_{5}(x) \cdot 4 \left( 1 + \mathop{\mathrm{erf}}(x)^{2} \right),  \label{eq:un_f_ERFtrans} \\
& f_{7}(x) = g_{2}(\psi_{2}(x)) = w_{7}(x) \cdot 4 \left( 1 + \tanh \left( \frac{\pi}{2} \sinh x \right)^{2} \right) \label{eq:un_f_DEtrans}
\end{align}
for $x \in \mathbf{R}$, where
\begin{align}
& w_{6}(x) = \frac{1}{(1+\mathrm{e}^{x})^{1/2}(1+\mathrm{e}^{-x})^{3/2}}, \\
%& w_{5}(x) = 4^{-1} (1 - \mathop{\mathrm{erf}}(x))^{1/2} (1 + \mathop{\mathrm{erf}}(x))^{3/2}, \\
& w_{7}(x) = \frac{1}{(1+\mathrm{e}^{\pi \sinh x})^{1/2}(1+\mathrm{e}^{-\pi \sinh x})^{3/2}}. 
\end{align}
By letting 
\begin{align}
d_{6} = \pi - \varepsilon, 
%\qquad d_{5} = 0.\textcolor{red}{XX}, 
\qquad d_{7} = \frac{\pi}{2} - \varepsilon
\label{eq:d_i_2}
\end{align}
with $0 < \varepsilon \ll 1$, for $i = 6,7$, 
we can confirm that the weight function $w_{i}$ satisfies
Assumptions~\ref{assump:w} and~\ref{assump:w_convex} for $d = d_{i}$. 
Furthermore, the assertion $f_{i} \in \boldsymbol{H}^{\infty}(\mathcal{D}_{d_{i}}, w_{i})$ holds true for $i = 6,7$. 
In the following, we adopt $\varepsilon = 10^{-10}$. 

For the functions $f_{6}$ and $f_{7}$, 
we also compare Formulas (I), (II) and the sinc interpolation formula in~\eqref{eq:trans_sinc_approx_revisited}. 
In these cases, we need to take the unevenness of the weights into account 
in determining the parameters $N_{\pm}$ and $h$ in~\eqref{eq:trans_sinc_approx_revisited}. 
Since 
\begin{align*}
& w_{6}(x) = 
\begin{cases}
\mathrm{O}(\mathrm{e}^{(3/2)x}) & (x \to -\infty), \\
\mathrm{O}(\mathrm{e}^{-(1/2)x}) & (x \to +\infty), 
\end{cases} \\
& w_{7}(x) = 
\begin{cases}
\mathrm{O}(\exp(- (3\pi/4) \, \mathrm{e}^{-x})) & (x \to -\infty), \\
\mathrm{O}(\exp(- (\pi/2) \, \mathrm{e}^{x})) & (x \to +\infty),
\end{cases}
\end{align*}
we adopt 
\begin{align*}
\begin{array}{llll}
h = \sqrt{\dfrac{8 \pi d_{3}}{3 n}}, & N_{-} = \left \lfloor \dfrac{1}{4} n \right \rfloor, & N_{+} = - N_{-} + n-1 &  \quad \text{for } w_{6},  \\
\phantom{\tiny a} & & & \\
h = \dfrac{2}{n} \log \dfrac{d_{4} n}{\sqrt{3/2}}, & N_{-} = \left \lfloor \dfrac{n}{2} - \dfrac{1}{2h} \log \dfrac{3}{2} \right \rfloor, & N_{+} = - N_{-} + n-1 & \quad \text{for } w_{7}. 
\end{array}
\end{align*}

We choose the evaluation points $x_{\ell}$ in a similar manner to that of Section~\ref{sec:VS_old_formulas}. 
We find values of $x_{1} \leq 0$ and $x_{1001} \geq 0$ satisfying that
\begin{align}
w_{i}(x_{1}), \ w_{i}(x_{1001}) \leq 
\begin{cases}
10^{-20} & (i = 6), \\
10^{-75} & (i = 7), 
\end{cases}
\notag 
\end{align}
and determine the points $x_{\ell}$ by~\eqref{eq:eval_points}. 
We adopt $(x_{1}, x_{1001}) = (-40, 100)$ and $(-4.5, 5.5)$ for the computations of $f_{6}$ and $f_{7}$, respectively. 

We show the sampling points and functions $U_{n}^{\mathrm{D}}(\tilde{a}^{\ast}; x) + Q(x)$ 
for these cases in Figures~\ref{fig:sample_w6_uneven_de}--\ref{fig:pot_w7_uneven_de}. 
Furthermore, we show the errors of Formulas (I), (II) and the sinc formula 
for $f_{6}$ and $f_{7}$ in Figures~\ref{fig:func_w6_uneven_se} and~\ref{fig:func_w7_uneven_de}, respectively. 
We can observe that Formulas (I) and (II) have approximately the same accuracy
and they outperform the sinc formula for each case.  

%\begin{comment}
%-----
\begin{figure}[ht]
\begin{minipage}[t]{0.48\linewidth}
\begin{center}
\includegraphics[width=\linewidth]{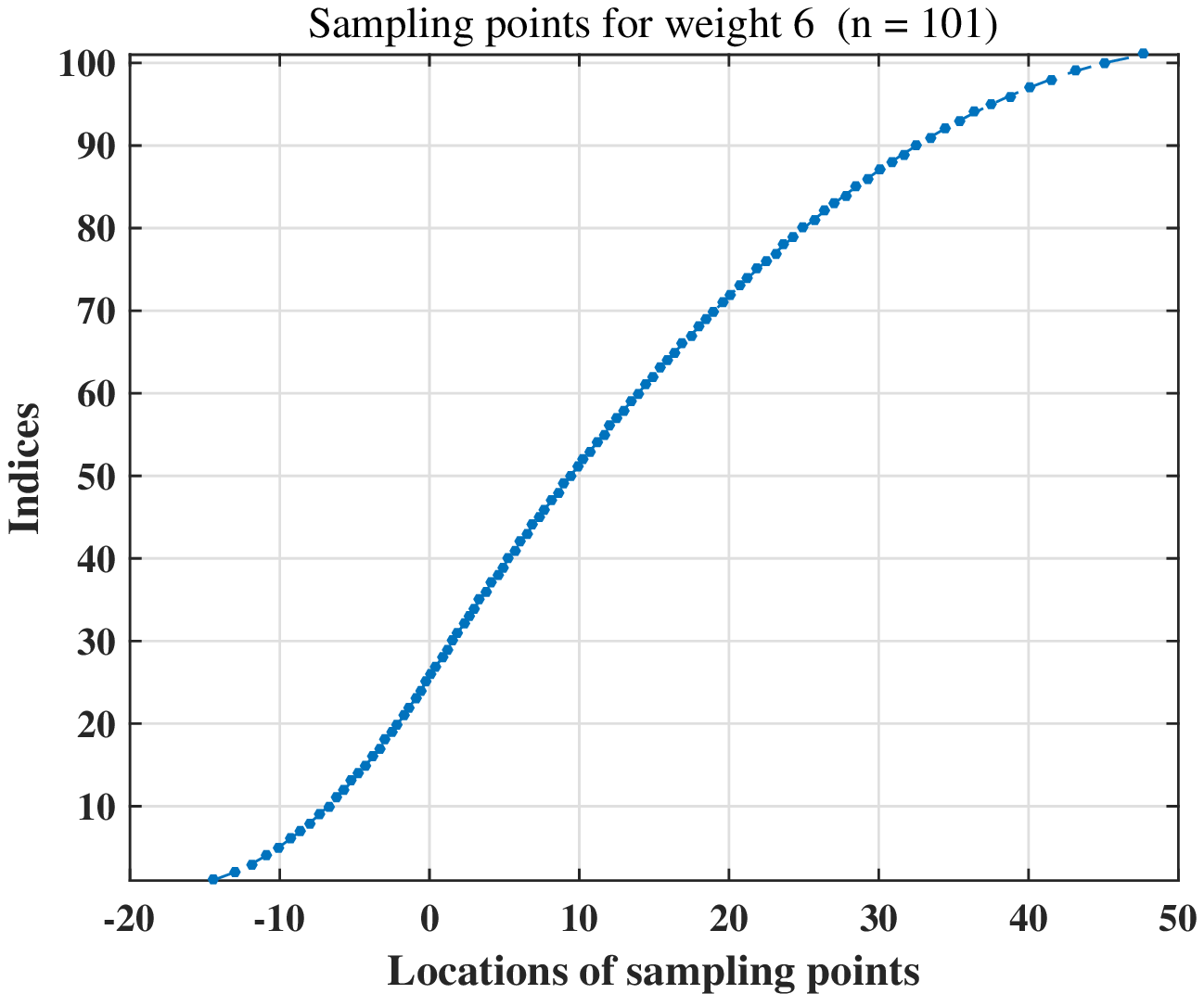}
\end{center}
\caption{Sampling points for $d_{6}$ and $w_{6}$ ($n = 101$). }
\label{fig:sample_w6_uneven_de}
\end{minipage}
\quad
\begin{minipage}[t]{0.48\linewidth}
\begin{center}
\includegraphics[width=\linewidth]{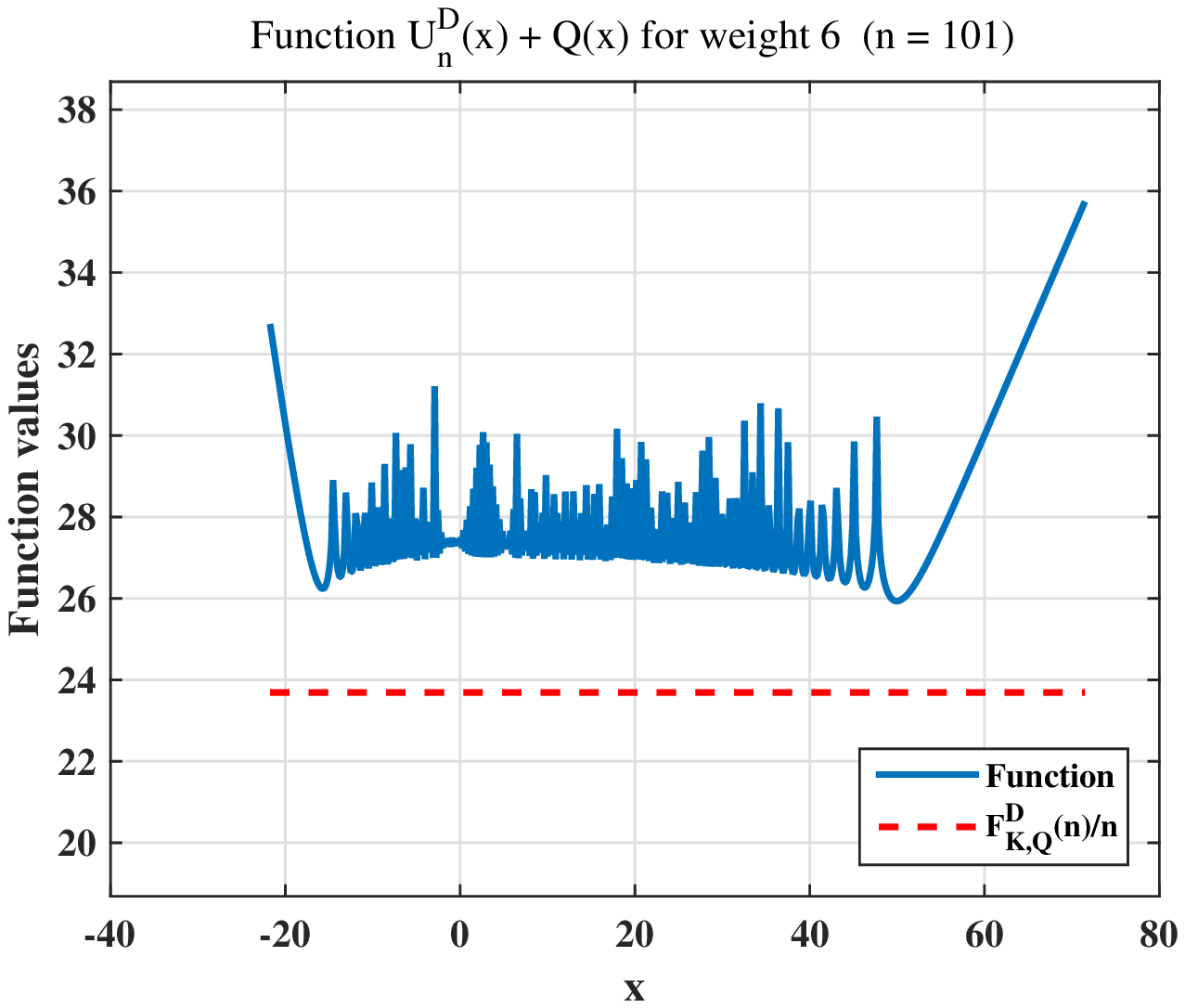}
\end{center}
\caption{Function $U_{n}^{\mathrm{D}}(\tilde{a}; x) + Q(x)$ for $d_{6}$ and $w_{6}$ ($n = 101$). }
\label{fig:pot_w6_uneven_de}
\end{minipage}
\end{figure}

\begin{figure}[ht]
\begin{minipage}[t]{0.48\linewidth}
\begin{center}
\includegraphics[width=\linewidth]{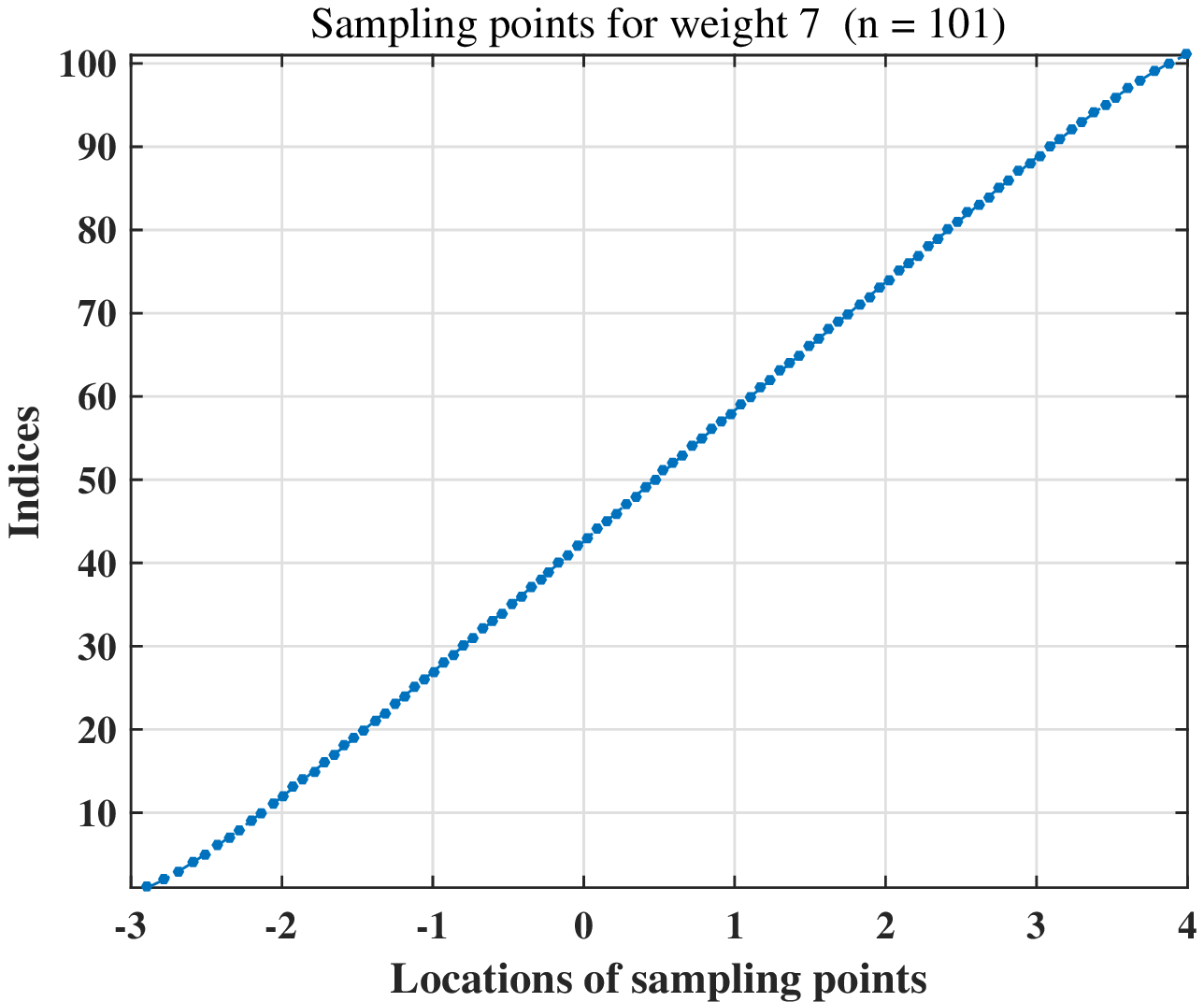}
\end{center}
\caption{Sampling points for $d_{7}$ and $w_{7}$ ($n = 101$). }
\label{fig:sample_w7_uneven_de}
\end{minipage}
\quad
\begin{minipage}[t]{0.48\linewidth}
\begin{center}
\includegraphics[width=\linewidth]{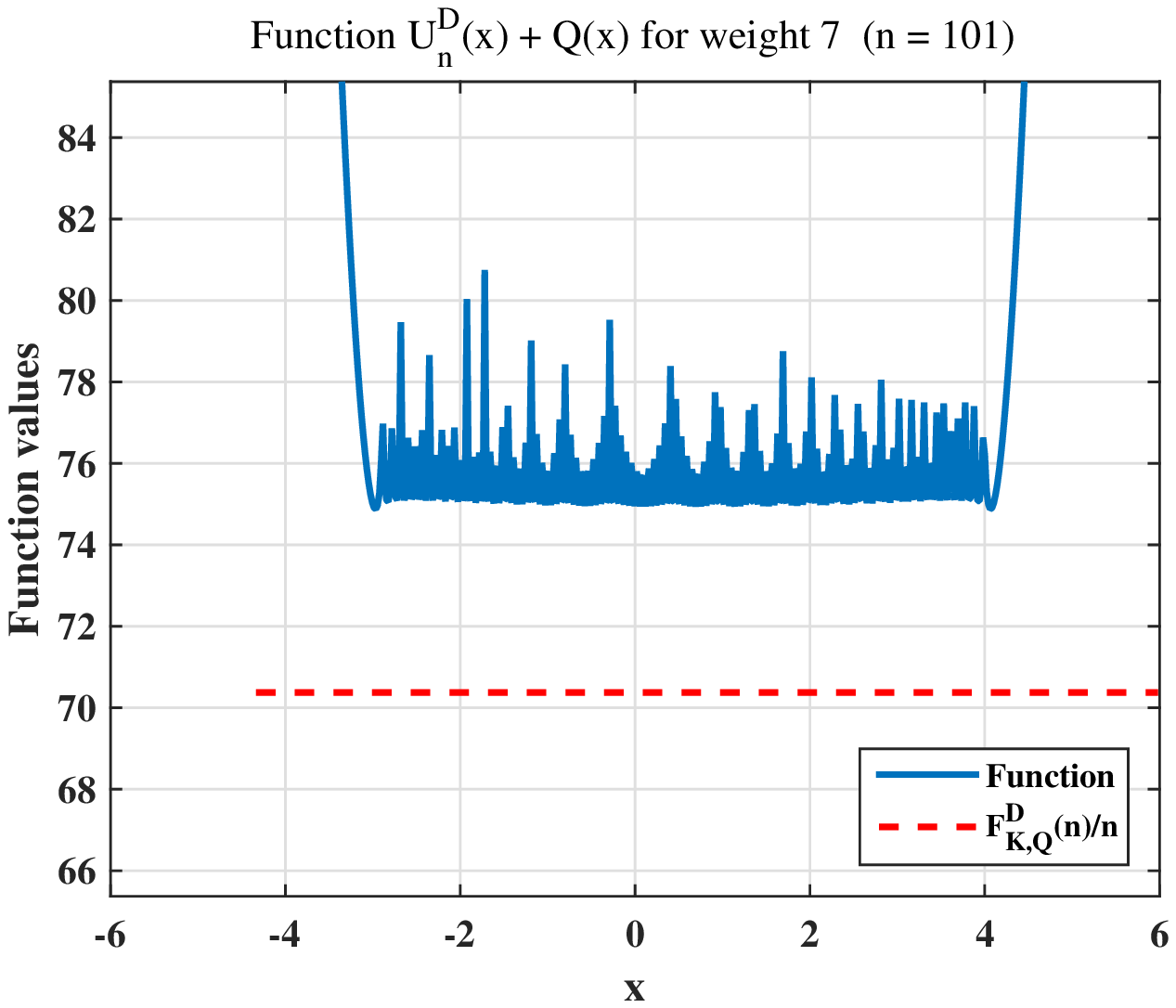}
\end{center}
\caption{Function $U_{n}^{\mathrm{D}}(\tilde{a}; x) + Q(x)$ for $d_{7}$ and $w_{7}$ ($n = 101$). }
\label{fig:pot_w7_uneven_de}
\end{minipage}
\end{figure}

\begin{figure}[ht]
\begin{minipage}[t]{0.48\linewidth}
\begin{center}
\includegraphics[width=\linewidth]{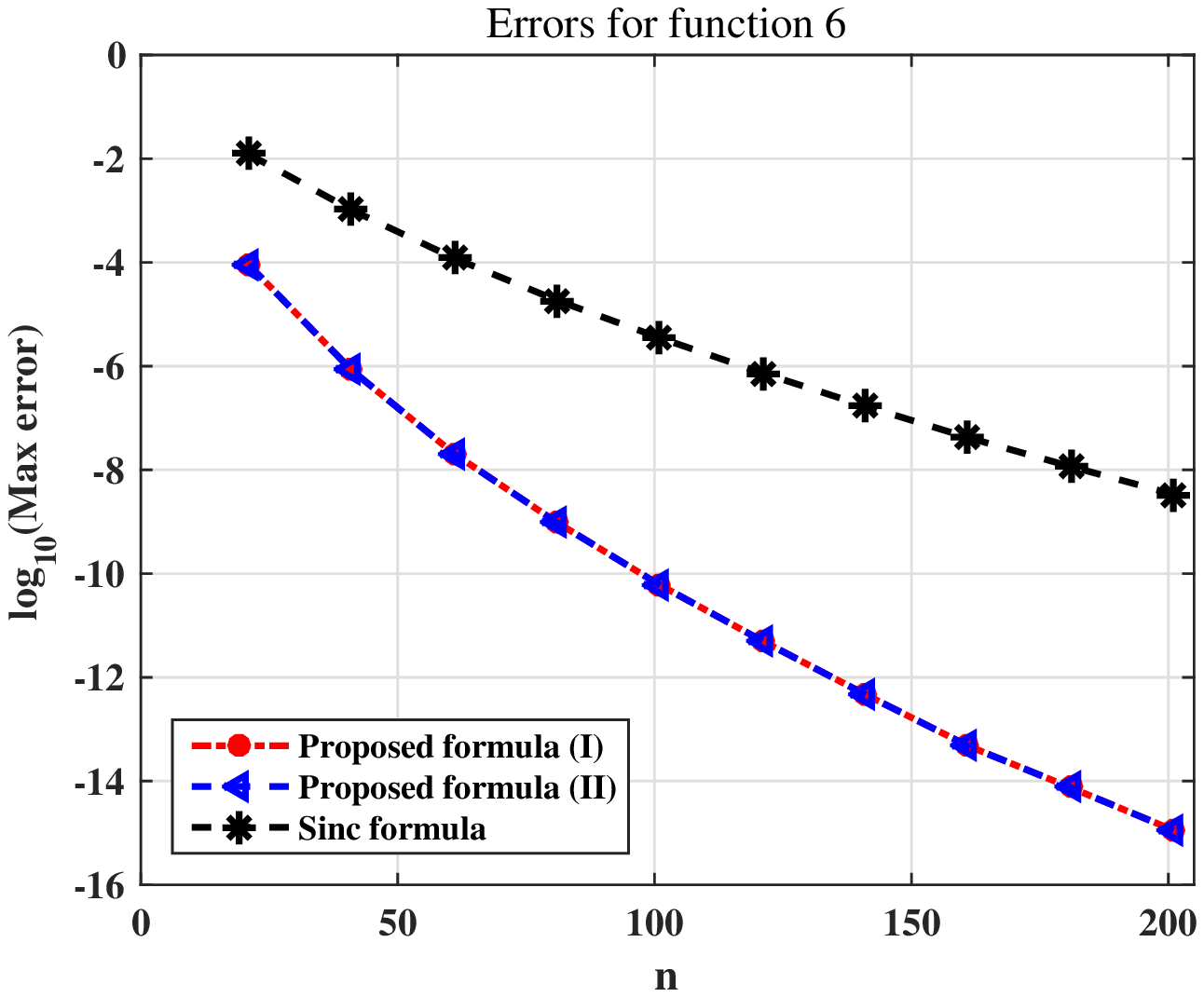}
\end{center}
\caption{Errors for function 6 ($f_{6}$). }
\label{fig:func_w6_uneven_se}
\end{minipage}
\quad
\begin{minipage}[t]{0.48\linewidth}
\begin{center}
\includegraphics[width=\linewidth]{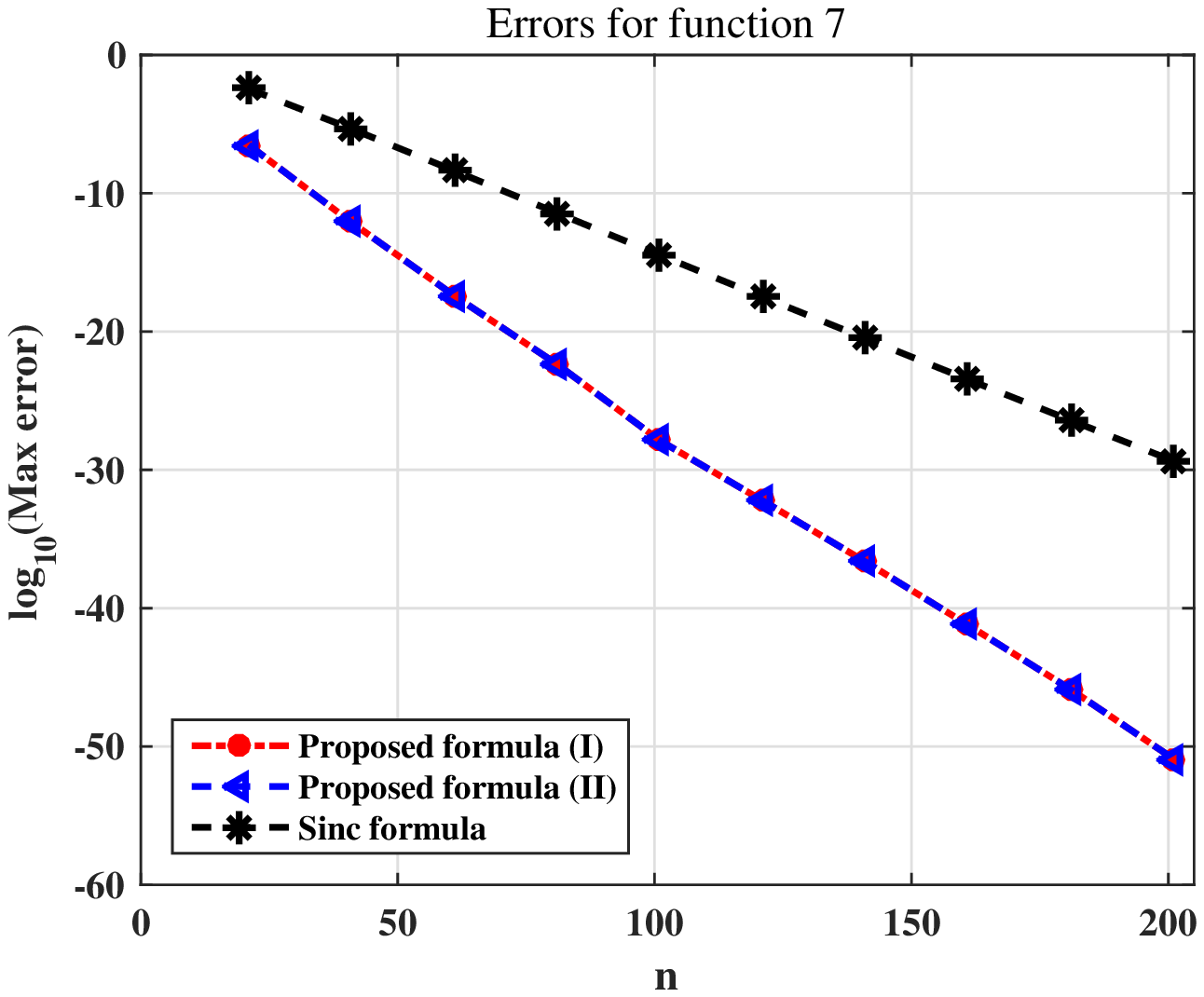}
\end{center}
\caption{Errors for function 7 ($f_{7}$). }
\label{fig:func_w7_uneven_de}
\end{minipage}
\end{figure}
%-----
%\end{comment}

%--------------------
\section{Concluding remarks}
\label{sec:concl}

In this paper, 
we proposed the method for designing accurate approximation formulas 
for functions in the spaces $\boldsymbol{H}^{\infty}(\mathcal{D}_{d}, w)$
by minimizing the discrete energy 
$I_{n}^{\mathrm{D}}$ in~\eqref{eq:def_D_ene} on $\mathcal{R}_{n} \subset \mathbf{R}^{n}$. 
On Assumptions~\ref{assump:w} and~\ref{assump:w_convex},
we proved that $I_{n}^{\mathrm{D}}$ is strictly convex on $\mathcal{R}_{n}$, 
and hence we showed that we can obtain the optimal solution $a^{\ast} \in \mathcal{R}_{n}$
approximately by the standard technique in convex optimization. 
Then, by using $a^{\ast}$ as a set of sampling points, 
we designed the approximation formula $L_{n}[a^{\ast};f]$ in~\eqref{eq:ProposedApproxFormula}
and gave the upper bound of its error for each space $\boldsymbol{H}^{\infty}(\mathcal{D}_{d}, w)$. 
By the numerical experiments, we showed the formula is accurate. 

Major themes for future work are
finding the precise orders of the errors of the proposed formulas with respect to $n$ 
and investigating whether the proposed formulas are asymptotically optimal or not. 
In addition, other themes will be 
their applications to various numerical methods such as the
numerical integration and solving differential/integral equations.  
In fact, in 
\ifthenelse{\value{journal} = 0}{%
the paper \cite{bib:TaOkaSu_PotNumInt_2017}, 
}{%else
\cite{bib:TaOkaSu_PotNumInt_2017}, 
}
we have considered the application of the previous formulas 
in \cite{bib:TaOkaSu_PotApprox_2017} to numerical integration.

%--------------------
% Funding
\section*{Funding}

K. Tanaka is supported by the grant-in-aid of 
Japan Society of the Promotion of Science with 
KAKENHI Grant Number 17K14241. 

%--------------------
% Acknowledgement
\section*{Acknowledgement}

The authors would like to give thanks to 
Dr.~Kuan Xu for his valuable comments about Remark~\ref{rem:ex_func_on_entire_R}. 

%--------------------
% Bibliographies
\ifthenelse{\value{journal} = 0}{%

}{%else
\input{biblio_IMA.tex}
}

%--------------------
%Appendices
\appendix

\section{Estimate of the difference $F_{K,Q}^{\mathrm{C}}(n) - F_{K,Q}^{\mathrm{D}}(n)$}
\label{sec:diff_FC_FD}

%----------
We provide an upper bound of the difference 
$F_{K,Q}^{\mathrm{C}}(n) - F_{K,Q}^{\mathrm{D}}(n)$. 
More precisely, 
on the assumption that
\begin{align}
\max_{1 \leq i \leq n-1} |a_{i+1}^{\ast} - a_{i}^{\ast} | \leq 1
\label{eq:assump_max_distance_a_ast}
\end{align}
for the minimizer $a^{\ast} \in \mathcal{R}_{n}$ of $I_{n}^{\mathrm{D}}$, 
we show that
\begin{align} 
F_{K,Q}^{\mathrm{C}}(n)
-
F_{K,Q}^{\mathrm{D}}(n)
\leq 
- (3n+1) \log h_{a^{\ast}} + C_{n} + e_{n}^{\ast}(Q), 
\label{eq:FCvsFD_step2_abstract}
\end{align}
where 
\begin{align}
h_{a^{\ast}} = \min_{1 \leq i \leq n-1} \left| a_{i+1}^{\ast} - a_{i}^{\ast} \right|
\label{eq:sep_dist_a_ast}
\end{align}
is the separation distance of $a^{\ast}$, 
the value $C_{n}$ is independent of $a^{\ast}$ with $C_{n} = \mathrm{O}(n) \ (n\to \infty)$, and 
$e_{n}^{\ast}(Q)$ is a sum of the differences of some integrations of $Q$ given by $a^{\ast}$. 
A concrete expression of the upper bound is given by Proposition~\ref{prop:diff_FC_FD} below. 
We prove it after showing several lemmas. 

%-----

The first lemma shows that $I_{n}^{\mathrm{C}}(\mu_{n}^{\ast})$ is monotonically increasing with respect to $n$. 

\begin{lem}
\label{lem:monotone_I}
For an integer $n \geq 1$, 
the inequality 
$I_{n}^{\mathrm{C}}(\mu_{n}^{\ast}) \leq I_{n+1}^{\mathrm{C}}(\mu_{n+1}^{\ast})$
holds true. 
\end{lem}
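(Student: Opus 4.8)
The plan is to exploit the linearity of the mass constraint together with the positivity of both the kernel $K$ and the external field $Q$. The idea is to turn the mass-$(n+1)$ minimizer into a mass-$n$ competitor by a single scaling and to check that this scaling can only decrease the energy; minimality of $\mu_n^{\ast}$ then closes the argument.

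Concretely, I would take the minimizer $\mu_{n+1}^{\ast} \in \mathcal{M}(\mathbf{R}, n+1)$ supplied by Theorem~\ref{thm:Green_basic} and set $\nu := \frac{n}{n+1}\,\mu_{n+1}^{\ast}$. This $\nu$ is a nonnegative measure of total mass $n$ with the same compact support as $\mu_{n+1}^{\ast}$, so $\nu \in \mathcal{M}_{\mathrm{c}}(\mathbf{R}, n) \subset \mathcal{M}(\mathbf{R}, n)$, and it has finite energy because $\mu_{n+1}^{\ast}$ does. Writing $A := \iint_{\mathbf{R}^2} K(x-y)\, \dd\mu_{n+1}^{\ast}(x)\, \dd\mu_{n+1}^{\ast}(y)$ and $B := \int_{\mathbf{R}} Q(x)\, \dd\mu_{n+1}^{\ast}(x)$, the definition~\eqref{eq:def_C_ene} together with bilinearity of the double integral gives
\begin{align*}
I_n^{\mathrm{C}}(\nu) = \left(\frac{n}{n+1}\right)^{2} A + 2\,\frac{n}{n+1}\,B,
\qquad
I_{n+1}^{\mathrm{C}}(\mu_{n+1}^{\ast}) = A + 2B.
\end{align*}

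The key observation is that $A \geq 0$ and $B \geq 0$. Indeed, $K$ is positive on $\mathbf{R}\setminus\{0\}$ by Proposition~\ref{thm:basic_K}, so $A \geq 0$; and $Q = -\log w \geq 0$ because $w$ takes values in $(0,1]$ on $\mathbf{R}$ by Assumption~\ref{assump:w}, so $B \geq 0$. Since $\frac{n}{n+1} < 1$, both coefficients shrink, whence $I_n^{\mathrm{C}}(\nu) \leq A + 2B = I_{n+1}^{\mathrm{C}}(\mu_{n+1}^{\ast})$. Finally, minimality of $\mu_n^{\ast}$ over $\mathcal{M}(\mathbf{R}, n)$ (Theorem~\ref{thm:Green_basic}) yields $I_n^{\mathrm{C}}(\mu_n^{\ast}) \leq I_n^{\mathrm{C}}(\nu)$, and chaining the two inequalities gives the claim.

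There is no genuine obstacle in this argument; the only points needing care are confirming that the scaled measure $\nu$ is admissible with finite energy (immediate from Theorem~\ref{thm:Green_basic}) and that both $A$ and $B$ are nonnegative. The latter is precisely where the normalization $w \leq 1$ on $\mathbf{R}$ and the positivity of the Green kernel $K$ enter, so these two structural hypotheses are doing all the work. The same reasoning applies verbatim for every integer $n \geq 1$.
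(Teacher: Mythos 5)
Your proof is correct, and it reaches the conclusion by a more direct route than the paper. The paper's proof splits $\mu_{n+1}^{\ast}$ into $n+1$ identical copies $\eta_{i}^{\ast} = \mu_{n+1}^{\ast}/(n+1)$, discards one copy at a time to produce a mass-$n$ competitor, invokes minimality of $\mu_{n}^{\ast}$, and then sums the resulting $n+1$ inequalities to obtain the stronger intermediate statement that $I_{n}^{\mathrm{C}}(\mu_{n}^{\ast})/n$ is nondecreasing, from which monotonicity of $I_{n}^{\mathrm{C}}(\mu_{n}^{\ast})$ follows using $I_{n}^{\mathrm{C}} \geq 0$. Note that the paper's competitor $\sum_{i \neq k} \eta_{i}^{\ast}$ is exactly your $\nu = \tfrac{n}{n+1}\mu_{n+1}^{\ast}$, so the two arguments test minimality against the same measure; what differs is the bookkeeping. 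Your scaling computation replaces the leave-one-out decomposition and the averaging over $k$ with a two-line identity, at the cost of explicitly using $Q \geq 0$ (the paper needs this too, but only at the very last step). In fact your argument recovers the paper's sharper conclusion for free: since $\bigl(\tfrac{n}{n+1}\bigr)^{2} A + \tfrac{2n}{n+1} B \leq \tfrac{n}{n+1}(A + 2B)$, you get $I_{n}^{\mathrm{C}}(\mu_{n}^{\ast}) \leq \tfrac{n}{n+1} I_{n+1}^{\mathrm{C}}(\mu_{n+1}^{\ast})$, i.e.\ the monotonicity of $I_{n}^{\mathrm{C}}(\mu_{n}^{\ast})/n$, which is what the appendix actually has in mind; you may wish to record that refinement. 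The only points worth making explicit are that $A$ and $B$ are individually finite (which follows from $I_{n+1}^{\mathrm{C}}(\mu_{n+1}^{\ast}) < \infty$ together with $A, B \geq 0$), so the scaled comparison is an inequality between finite numbers.
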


\begin{proof}
Let $\eta_{i}^{\ast}$ be the copy of $\mu_{n+1}^{\ast} / (n+1)$ for $i = 1,\ldots, n+1$. 
Then, for $k = 1,\ldots, n+1$, we have
\begin{align*}
I_{n+1}^{\mathrm{C}}(\mu_{n+1}^{\ast}) 
= & \, I_{n+1}^{\mathrm{C}}\left( \sum_{i=1}^{n+1} \eta_{i}^{\ast} \right) \\
= & \, 
\sum_{i=1}^{n+1} 
\sum_{j=1}^{n+1} 
\int_{\mathbf{R}} \int_{\mathbf{R}} K(x - y) \, \mathrm{d}\eta_{i}^{\ast}(x) \mathrm{d}\eta_{j}^{\ast}(y)
+ 2 \sum_{i=1}^{n+1} 
\int_{\mathbf{R}} Q(x) \, \mathrm{d}\eta_{i}^{\ast}(x) \\
\geq & \, 
\sum_{\begin{subarray}{c} i=1 \\ i \neq k \end{subarray}}^{n+1} 
\sum_{\begin{subarray}{c} j=1 \\ j \neq k \end{subarray}}^{n+1} 
\int_{\mathbf{R}} \int_{\mathbf{R}} K(x - y) \, \mathrm{d}\eta_{i}^{\ast}(x) \mathrm{d}\eta_{j}^{\ast}(y)
+ 2 \sum_{\begin{subarray}{c} i=1 \\ i \neq k \end{subarray}}^{n+1} 
\int_{\mathbf{R}} Q(x) \, \mathrm{d}\eta_{i}^{\ast}(x) \\
& + \sum_{i = 1}^{n+1} 
\int_{\mathbf{R}} \int_{\mathbf{R}} K(x - y) \, \mathrm{d}\eta_{i}^{\ast}(x) \mathrm{d}\eta_{k}^{\ast}(y) 
+ 2 \int_{\mathbf{R}} Q(x) \, \mathrm{d}\eta_{k}^{\ast}(x) \\
= & \, I_{n}^{\mathrm{C}}\left( \sum_{\begin{subarray}{c} i=1 \\ i \neq k \end{subarray}}^{n+1}  \eta_{i}^{\ast} \right)
+ \sum_{i = 1}^{n+1} 
\int_{\mathbf{R}} \int_{\mathbf{R}} K(x - y) \, \mathrm{d}\eta_{i}^{\ast}(x) \mathrm{d}\eta_{k}^{\ast}(y) 
+ 2 \int_{\mathbf{R}} Q(x) \, \mathrm{d}\eta_{k}^{\ast}(x) \\
\geq & \, 
I_{n}^{\mathrm{C}}(\mu_{n}^{\ast})
+ \sum_{i = 1}^{n+1} 
\int_{\mathbf{R}} \int_{\mathbf{R}} K(x - y) \, \mathrm{d}\eta_{i}^{\ast}(x) \mathrm{d}\eta_{k}^{\ast}(y) 
+ 2 \int_{\mathbf{R}} Q(x) \, \mathrm{d}\eta_{k}^{\ast}(x). 
\end{align*}
Then, by summing up both sides of this inequality for $k = 1,\ldots, n+1$, we have
\begin{align*}
(n+1) \, I_{n+1}^{\mathrm{C}}(\mu_{n+1}^{\ast}) 
\geq 
(n+1) \, I_{n}^{\mathrm{C}}(\mu_{n}^{\ast}) + I_{n+1}^{\mathrm{C}}(\mu_{n+1}^{\ast})
\iff
\frac{I_{n+1}^{\mathrm{C}}(\mu_{n+1}^{\ast})}{n+1} 
\geq 
\frac{I_{n}^{\mathrm{C}}(\mu_{n}^{\ast})}{n}. 
\end{align*}
Hence $I_{n}^{\mathrm{C}}(\mu_{n}^{\ast})/n$ is monotonically increasing, and so is $I_{n}^{\mathrm{C}}(\mu_{n}^{\ast})$. 
\end{proof}

%-----

As an approximation of the measure 
$\sum_{i=1}^{n} \delta_{a_{i}} \in \mathcal{M}_{\mathrm{c}}(\mathbf{R}, n)$ 
for $a = (a_{1}, \ldots , a_{n}) \in \mathcal{R}_{n}$, 
we consider the Borel measure $\nu_{\hat{a}} \in \mathcal{M}_{\mathrm{c}}(\mathbf{R}, n+1)$ defined by
\[
\nu_{\hat{a}}(Z) = \sum_{i=0}^{n} \frac{1}{a_{i+1} - a_{i}} \int_{Z \cap [a_{i}, a_{i+1}]} \mathrm{d}y
\]
for $\hat{a} = (a_{0}, a_{1}, \ldots , a_{n}, a_{n+1}) \in \mathcal{R}_{n+2}$. 
Then, we define 
\begin{align}
S_{i}(\hat{a}) 
& = \int_{a_{i-1}}^{a_{i+1}} K(a_{i} - y) \, \mathrm{d}\nu_{\hat{a}}(y), \label{eq:def_S_int} \\
T_{i}(\hat{a})
&= \int_{a_{i}}^{a_{i+1}} \int_{a_{i}}^{a_{i+1}} K(x - y) \, \mathrm{d}\nu_{\hat{a}}(y) \mathrm{d}\nu_{\hat{a}}(x), \label{eq:def_T_intint} \\
e_{n}^{(1)}(Q; \hat{a})
& =
\int_{a_{0}}^{a_{n+1}} Q(x) \, \mathrm{d}\nu_{\hat{a}}(x)
- 
\frac{n-1}{n} \sum_{i=1}^{n} Q(a_{i}), \\
e_{n}^{(2)}(Q; \hat{a})
& =
\int_{a_{0}}^{a_{n+1}} Q(x) \, \mathrm{d}\nu_{\hat{a}}(x)
-
\int_{\mathbf{R}} Q(x) \, \mathrm{d}\mu_{n}^{\ast}(x)
\end{align}
for $\hat{a}$. 
By using these expressions, we can give a preliminary upper bound of 
$F_{K,Q}^{\mathrm{C}}(n) - F_{K,Q}^{\mathrm{D}}(n)$ as shown in the following lemma. 

\begin{lem}
\label{lem:FCvsFD_step1}
Let $a^{\ast} = (a_{1}^{\ast}, \ldots , a_{n}^{\ast}) \in \mathcal{R}_{n}$ be the minimizer of $I_{n}^{\mathrm{D}}$, 
and choose $a_{0}^{\ast}$ and $a_{n+1}^{\ast}$ such that 
$\hat{a}^{\ast} = (a_{0}^{\ast}, a_{1}^{\ast}, \ldots , a_{n}^{\ast}, a_{n+1}^{\ast}) \in \mathcal{R}_{n+2}$.  
Then, we have
\begin{align}
F_{K,Q}^{\mathrm{C}}(n)
-
F_{K,Q}^{\mathrm{D}}(n)
\leq 
\sum_{i=1}^{n} S_{i}(\hat{a}^{\ast})
+ 
\sum_{i=0}^{n} T_{i}(\hat{a}^{\ast})
+ 
e_{n}^{(1)}(Q; \hat{a}^{\ast})
+ 
e_{n}^{(2)}(Q; \hat{a}^{\ast}). 
\label{eq:FCvsFD_step1}
\end{align}
\end{lem}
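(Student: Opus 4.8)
The plan is to combine three ingredients: the monotonicity of the continuous energy from Lemma~\ref{lem:monotone_I}, the minimality of $\mu_{n+1}^{\ast}$, and a convexity comparison (Hermite--Hadamard) between the continuous double integral against $\nu_{\hat{a}^{\ast}}$ and the discrete double sum over the nodes $a^{\ast}$. First I would rewrite the two constants through the energies: from the definitions, $F_{K,Q}^{\mathrm{C}}(n) = I_{n}^{\mathrm{C}}(\mu_{n}^{\ast}) - \int_{\mathbf{R}} Q\, \dd\mu_{n}^{\ast}$, while $F_{K,Q}^{\mathrm{D}}(n) = \sum_{i}\sum_{j \neq i} K(a_{i}^{\ast}-a_{j}^{\ast}) + \frac{n-1}{n}\sum_{i} Q(a_{i}^{\ast})$. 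Since $\nu_{\hat{a}^{\ast}} \in \mathcal{M}_{\mathrm{c}}(\mathbf{R}, n+1)$, Lemma~\ref{lem:monotone_I} and the minimality of $\mu_{n+1}^{\ast}$ yield
\[
I_{n}^{\mathrm{C}}(\mu_{n}^{\ast}) \le I_{n+1}^{\mathrm{C}}(\mu_{n+1}^{\ast}) \le I_{n+1}^{\mathrm{C}}(\nu_{\hat{a}^{\ast}}) = \int_{\mathbf{R}}\int_{\mathbf{R}} K(x-y)\, \dd\nu_{\hat{a}^{\ast}}(x)\, \dd\nu_{\hat{a}^{\ast}}(y) + 2\int_{\mathbf{R}} Q\, \dd\nu_{\hat{a}^{\ast}}.
\]

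Substituting this into $F_{K,Q}^{\mathrm{C}}(n)$ and subtracting $F_{K,Q}^{\mathrm{D}}(n)$, the external-field contributions collect exactly into $e_{n}^{(1)}(Q;\hat{a}^{\ast}) + e_{n}^{(2)}(Q;\hat{a}^{\ast})$, because $2\int Q\, \dd\nu_{\hat{a}^{\ast}} - \int Q\, \dd\mu_{n}^{\ast} - \frac{n-1}{n}\sum_{i} Q(a_{i}^{\ast})$ is by definition $e_{n}^{(1)} + e_{n}^{(2)}$. This reduces the lemma to the purely kernel-theoretic inequality
\[
\int\int K\, \dd\nu_{\hat{a}^{\ast}}\, \dd\nu_{\hat{a}^{\ast}} - \sum_{i}\sum_{j \neq i} K(a_{i}^{\ast}-a_{j}^{\ast}) \le \sum_{i=1}^{n} S_{i}(\hat{a}^{\ast}) + \sum_{i=0}^{n} T_{i}(\hat{a}^{\ast}),
\]
which I denote $(\star)$. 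To prove $(\star)$ I would split the double integral according to the pair of subintervals $[a_{i}^{\ast}, a_{i+1}^{\ast}]$, $[a_{j}^{\ast}, a_{j+1}^{\ast}]$ on which $x$ and $y$ lie, denoting by $\nu_{i}$ the restriction of $\nu_{\hat{a}^{\ast}}$ to the $i$-th subinterval, which is the uniform probability measure there. The diagonal contributions $i=j$ are by definition exactly $\sum_{i=0}^{n} T_{i}$. For each off-diagonal pair I would invoke the convexity of $K$ (Proposition~\ref{thm:basic_K}): applying the Hermite--Hadamard inequality once in $x$ and once in $y$ bounds the average of $K$ over a subinterval by the mean of its two endpoint values, so that
\[
\int\int K\, \dd\nu_{i}\, \dd\nu_{j} \le \tfrac{1}{4}\bigl[ K(a_{i}^{\ast}-a_{j}^{\ast}) + K(a_{i}^{\ast}-a_{j+1}^{\ast}) + K(a_{i+1}^{\ast}-a_{j}^{\ast}) + K(a_{i+1}^{\ast}-a_{j+1}^{\ast})\bigr]
\]
whenever the two subintervals are disjoint enough that $K(\cdot-\cdot)$ stays convex on them.

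Finally I would carry out the bookkeeping that turns these corner averages into $\sum_{i \neq j} K(a_{i}^{\ast}-a_{j}^{\ast})$ plus the near-diagonal correction $\sum_{i=1}^{n} S_{i}$. For subinterval pairs separated by at least one node, the four corner terms reassemble each interior node difference $K(a_{k}^{\ast}-a_{l}^{\ast})$ with total coefficient at most one, thereby reproducing the discrete double sum. For adjacent subintervals the corner average is unavailable, since it would evaluate $K$ at the argument $0$; here the mass sitting immediately next to a node $a_{i}^{\ast}$ is precisely what $S_{i}$ integrates over the two adjacent subintervals, and I would dominate these singular adjacent interactions, together with the residual endpoint terms attached to the two artificial endpoints $a_{0}^{\ast}$ and $a_{n+1}^{\ast}$, by $\sum_{i=1}^{n} S_{i}$. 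Collecting the three groups yields $(\star)$, and hence the lemma.

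The hard part will be this last accounting step: one must check that, after the endpoint-averaging, every interior pair $K(a_{k}^{\ast}-a_{l}^{\ast})$ is covered with coefficient no larger than one, and that all remaining singular pieces---the adjacent-subinterval interactions and the boundary terms attached to $a_{0}^{\ast}$ and $a_{n+1}^{\ast}$---are genuinely dominated by the $S_{i}$-integrals rather than merely comparable to them. The convexity of $K$ guarantees that each individual averaging inequality points in the right direction, so the only real delicacy is combinatorial: ensuring there is no overcounting across the $n+1$ subintervals versus the $n$ interior nodes.
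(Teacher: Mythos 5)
Your outer reduction is exactly the paper's: the chain $I_{n}^{\mathrm{C}}(\mu_{n}^{\ast})\le I_{n+1}^{\mathrm{C}}(\mu_{n+1}^{\ast})\le I_{n+1}^{\mathrm{C}}(\nu_{\hat{a}^{\ast}})$, and the identification of $2\int Q\,\dd\nu_{\hat{a}^{\ast}}-\int Q\,\dd\mu_{n}^{\ast}-\frac{n-1}{n}\sum_{i}Q(a_{i}^{\ast})$ with $e_{n}^{(1)}+e_{n}^{(2)}$, are both correct, and your inequality $(\star)$ is precisely the kernel estimate the paper proves. Where you diverge is in the proof of $(\star)$, and that is where the gap sits. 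The paper never forms the symmetric corner average $\tfrac14[K(a_{i}-a_{j})+\cdots]$. Instead it uses a one-sided bound: for $x$ in the $i$-th subinterval, the $\nu_{\hat{a}}$-average of $K(x-\cdot)$ over any \emph{other} subinterval is at most the value of $K$ at that subinterval's endpoint nearest to $x$ (a convex function attains its maximum on an interval at an endpoint, and $K$ decreases with distance). As $j$ ranges over the $n$ non-containing subintervals, these nearest endpoints are exactly $a_{1}^{\ast},\dots,a_{n}^{\ast}$, each occurring once and never $a_{0}^{\ast}$ or $a_{n+1}^{\ast}$; this is \eqref{eq:Key_ineq_K_1}, whose integration in $x$ yields $\sum_{i}R_{i}+\sum_{i}T_{i}$, and a second application at $x=a_{i}^{\ast}$ gives $R_{i}\le\sum_{j\ne i}K(a_{i}^{\ast}-a_{j}^{\ast})+S_{i}(\hat{a}^{\ast})$, which is \eqref{eq:Key_ineq_K_2}.

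Your corner-averaging scheme, by contrast, generates long-range terms $K(a_{0}^{\ast}-a_{l}^{\ast})$ and $K(a_{k}^{\ast}-a_{n+1}^{\ast})$ with total weight up to $\tfrac12$ for each of $\Theta(n)$ values of $l$ (resp.\ $k$); these appear in neither the discrete double sum nor the $S_{i}$'s. The assertion that they are "dominated by $\sum_{i}S_{i}$" is exactly the content that needs proving, and it is far from automatic: each $S_{i}$ is a near-singular integral localized at $a_{i}^{\ast}$, of size roughly $-2\log(\text{local gap})+\mathrm{O}(1)$, while the boundary sums depend on the global geometry of the configuration, and your coefficient budget for the interior pairs $K(a_{k}^{\ast}-a_{l}^{\ast})$ with $|k-l|\ge 3$ is already saturated at exactly $1$, leaving no slack to absorb anything there. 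The adjacent-pair claim is also not quite right as stated: after one Hermite--Hadamard step in $x$, the pair of subintervals $\{i,i+1\}$ leaves the term $\int_{a_{i+1}}^{a_{i+2}}K(a_{i}^{\ast}-y)\,\dd\nu_{\hat{a}^{\ast}}(y)$, an integral over a subinterval \emph{not} adjacent to $a_{i}^{\ast}$ and hence not covered by $S_{i}(\hat{a}^{\ast})$; it must be charged to the discrete term $K(a_{i}^{\ast}-a_{i+1}^{\ast})$, further complicating the count. So the deferred "accounting step" is not a routine verification but the actual mathematical content, and it is unresolved; the paper's asymmetric nearest-interior-endpoint bound is designed precisely to avoid producing any term at $a_{0}^{\ast}$ or $a_{n+1}^{\ast}$ and any fractional overcounting.
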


\begin{proof}
We consider a general element $\hat{a} = (a_{0}, a_{1}, \ldots, a_{n}, a_{n+1}) \in \mathcal{R}_{n+2}$. 
By noting the convexity of $K$, for $x \in (a_{i}, a_{i+1})$ we have
\begin{align}
& \left( \int_{a_{0}}^{a_{n+1}} - \int_{a_{i}}^{a_{i+1}} \right) K(x - y) \, \mathrm{d}\nu_{\hat{a}}(y)
\leq
\sum_{j=1}^{n} K(x - a_{j}) \notag \\
& \iff
\int_{a_{0}}^{a_{n+1}} K(x - y) \, \mathrm{d}\nu_{\hat{a}}(y)
\leq
\sum_{j=1}^{n} K(x - a_{j})
+ 
\int_{a_{i}}^{a_{i+1}} K(x - y) \, \mathrm{d}\nu_{\hat{a}}(y). 
\label{eq:Key_ineq_K_1}
\end{align}
In a similar manner, we have
\begin{align}
\int_{a_{0}}^{a_{n+1}} K(a_{i} - y) \, \mathrm{d}\nu_{\hat{a}}(y)
\leq
\sum_{\begin{subarray}{c} j=1 \\ j\neq i \end{subarray}}^{n} K(a_{i} - a_{j})
+ 
\int_{a_{i-1}}^{a_{i+1}} K(a_{i} - y) \, \mathrm{d}\nu_{\hat{a}}(y). 
\label{eq:Key_ineq_K_2}
\end{align}
Then, 
by using Lemma~\ref{lem:monotone_I}, 
the fact that $\nu_{\hat{a}} \in \mathcal{M}_{\mathrm{c}}(\mathbf{R}, n+1)$,
and Inequality~\eqref{eq:Key_ineq_K_1}, 
we can bound the optimal value $I_{n}^{\mathrm{C}}(\mu_{n}^{\ast})$ from above as follows: 
\begin{align*}
I_{n}^{\mathrm{C}}(\mu_{n}^{\ast})
\leq 
I_{n+1}^{\mathrm{C}}(\mu_{n+1}^{\ast})
& \leq 
I_{n+1}^{\mathrm{C}}(\nu_{\hat{a}}) \\
& = 
\int_{a_{0}}^{a_{n+1}} \int_{a_{0}}^{a_{n+1}} K(x - y) \, \mathrm{d}\nu_{\hat{a}}(y) \mathrm{d}\nu_{\hat{a}}(x) 
+ 2 \int_{a_{0}}^{a_{n+1}} Q(x) \, \mathrm{d}\nu_{\hat{a}}(x) \\
& \leq 
\sum_{i=1}^{n} R_{i}(\hat{a})
+ 
\sum_{i=0}^{n} T_{i}(\hat{a})
+ 
2 \int_{a_{0}}^{a_{n+1}} Q(x) \, \mathrm{d}\nu_{\hat{a}}(x), 
\end{align*}
where 
\begin{align}
R_{i}(\hat{a}) = \int_{a_{0}}^{a_{n+1}} K(x - a_{i}) \, \mathrm{d}\nu_{\hat{a}}(x). \notag 
\end{align}
Furthermore, by using Inequality~\eqref{eq:Key_ineq_K_2}, we have
\begin{align*}
R_{i}(\hat{a})
\leq
\sum_{\begin{subarray}{c} j=1 \\ j\neq i \end{subarray}}^{n} K(a_{i} - a_{j})
+ 
\int_{a_{i-1}}^{a_{i+1}} K(a_{i} - y) \, \mathrm{d}\nu_{a}(y). 
\end{align*}
Therefore, for $a = (a_{1}, \ldots, a_{n}) \in \mathcal{R}_{n}$ we have
\begin{align}
I_{n}^{\mathrm{C}}(\mu_{n}^{\ast})
& \leq  
\sum_{i=1}^{n} 
\sum_{\begin{subarray}{c} j=1 \\ j\neq i \end{subarray}}^{n} K(a_{i} - a_{j})
+
\sum_{i=1}^{n} S_{i}(\hat{a})
+ 
\sum_{i=0}^{n} T_{i}(\hat{a})
+ 
2 \int_{a_{0}}^{a_{n+1}} Q(x) \, \mathrm{d}\nu_{\hat{a}}(x) \notag \\
& \leq 
I_{n}^{\mathrm{D}}(a)
+
\sum_{i=1}^{n} S_{i}(\hat{a})
+ 
\sum_{i=0}^{n} T_{i}(\hat{a})
+ 
2 e_{n}^{(1)}(Q; \hat{a}). \notag
\end{align}
Finally, by letting $a = a^{\ast}$ and choosing $a_{0}^{\ast}$ and $a_{n+1}^{\ast}$ 
such that $\hat{a}^{\ast} = (a_{0}^{\ast}, a_{1}^{\ast}, \ldots , a_{n}^{\ast}, a_{n+1}^{\ast}) \in \mathcal{R}_{n+2}$, 
we have 
\begin{align}
F_{K, Q}^{\mathrm{C}}(n)
& \leq 
I_{n}^{\mathrm{D}}(a^{\ast})
+
\sum_{i=1}^{n} S_{i}(\hat{a}^{\ast})
+ 
\sum_{i=0}^{n} T_{i}(\hat{a}^{\ast})
+ 
2 e_{n}^{(1)}(Q; \hat{a}^{\ast})
- \int_{\mathbf{R}} Q(x) \, \mathrm{d}\mu_{n}^{\ast}(x) \notag \\
& = 
F_{K, Q}^{\mathrm{D}}(n)
+
\sum_{i=1}^{n} S_{i}(\hat{a}^{\ast})
+ 
\sum_{i=0}^{n} T_{i}(\hat{a}^{\ast})
+ 
e_{n}^{(1)}(Q; \hat{a}^{\ast})
+ 
e_{n}^{(2)}(Q; \hat{a}^{\ast}), \notag
\end{align}
which is Inequality~\eqref{eq:FCvsFD_step1}.
\end{proof}

%-----

In order to bound $S_{i}(\hat{a}^{\ast})$ and $T_{i}(\hat{a}^{\ast})$ from above, 
we use the fact that the function $K$ given by~\eqref{eq:SettingK} satisfies
\begin{align}
K(x) 
\leq 
- \log \left| \left( \tanh \frac{\pi}{4d} \right) x \right| 
\leq 
- \log |x| + c_{d}
\quad \text{for $x$ with $|x| \leq 1$}, 
\label{eq:K_less_than_mlog}
\end{align}
where $c_{d} \geq 0$ is given by
\begin{align}
c_{d} = - \log \left( \tanh \frac{\pi}{4d} \right). 
\label{eq:def_c_d}
\end{align}

\begin{lem}
Let $a^{\ast} = (a_{1}^{\ast}, \ldots , a_{n}^{\ast}) \in \mathcal{R}_{n}$ be the minimizer of $I_{n}^{\mathrm{D}}$
satisfying \eqref{eq:assump_max_distance_a_ast}, 
and let $a_{0}^{\ast}$ and $a_{n+1}^{\ast}$ be chosen such that 
$\hat{a}^{\ast} = (a_{0}^{\ast}, a_{1}^{\ast}, \ldots , a_{n}^{\ast}, a_{n+1}^{\ast}) \in \mathcal{R}_{n+2}$
and $h_{a^{\ast}} \leq | a_{j+1}^{\ast} - a_{j}^{\ast} | \leq 1$ for $j = 0, n$, 
where $h_{a^{\ast}}$ is the separation distance given by \eqref{eq:sep_dist_a_ast}. 
Then, for $i = 1, \ldots, n$, 
we have
\begin{align}
S_{i}(\hat{a}^{\ast}) 
& \leq -2 \log h_{a^{\ast}} + 2(1+c_{d}), 
\label{eq:S_i_ast} \\
T_{i}(\hat{a}^{\ast})   
& \leq - \log h_{a^{\ast}} + \frac{1}{2} + c_{d} 
\label{eq:T_i_ast}. 
\end{align}
\end{lem}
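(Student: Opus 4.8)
The plan is to bound $S_{i}(\hat{a}^{\ast})$ and $T_{i}(\hat{a}^{\ast})$ directly, exploiting the fact that the measure $\nu_{\hat{a}^{\ast}}$ has the explicit constant density $1/(a_{j+1}^{\ast}-a_{j}^{\ast})$ on each panel $[a_{j}^{\ast},a_{j+1}^{\ast}]$, together with the logarithmic majorant \eqref{eq:K_less_than_mlog}. The only extra ingredients are the two-sided gap bounds $h_{a^{\ast}}\le a_{j+1}^{\ast}-a_{j}^{\ast}\le 1$ valid for every $j=0,1,\ldots,n$: the interior gaps satisfy these by the standing hypothesis \eqref{eq:assump_max_distance_a_ast} and the definition \eqref{eq:sep_dist_a_ast} of $h_{a^{\ast}}$, while the two boundary gaps are covered by the stated choice of $a_{0}^{\ast}$ and $a_{n+1}^{\ast}$. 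The upper bound $1$ is exactly what licenses \eqref{eq:K_less_than_mlog}, and the lower bound $h_{a^{\ast}}$ is what turns a gap-dependent estimate into the uniform bound claimed in the lemma.

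For $S_{i}(\hat{a}^{\ast})$, defined in \eqref{eq:def_S_int}, I would split the domain $[a_{i-1}^{\ast},a_{i+1}^{\ast}]$ at $a_{i}^{\ast}$ into its two adjacent panels. On each panel the density is constant, so after the substitution $u=|a_{i}^{\ast}-y|$ (using that $K$ is even) each contribution reduces to $\frac{1}{\delta}\int_{0}^{\delta}K(u)\,\mathrm{d}u$, where $\delta$ is the corresponding gap length. Applying \eqref{eq:K_less_than_mlog} together with the elementary evaluation $\int_{0}^{\delta}(-\log u)\,\mathrm{d}u=\delta(1-\log\delta)$ gives $\frac{1}{\delta}\int_{0}^{\delta}K(u)\,\mathrm{d}u\le 1+c_{d}-\log\delta$. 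Since $\delta\ge h_{a^{\ast}}$ we replace $-\log\delta$ by $-\log h_{a^{\ast}}$, and summing the two panels produces the factor $2$ and the constant $2(1+c_{d})$ in \eqref{eq:S_i_ast}.

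For $T_{i}(\hat{a}^{\ast})$, defined in \eqref{eq:def_T_intint}, I would rescale the single panel $[a_{i}^{\ast},a_{i+1}^{\ast}]$ to the unit square. Writing $\delta=a_{i+1}^{\ast}-a_{i}^{\ast}$ and setting $x=a_{i}^{\ast}+\delta s$, $y=a_{i}^{\ast}+\delta t$, the two factors $1/\delta$ from the density cancel the Jacobian, reducing $T_{i}(\hat{a}^{\ast})$ to $\int_{0}^{1}\int_{0}^{1}K(\delta(s-t))\,\mathrm{d}s\,\mathrm{d}t$. Because $|\delta(s-t)|\le\delta\le 1$, the bound \eqref{eq:K_less_than_mlog} applies and splits the integrand as $-\log\delta-\log|s-t|+c_{d}$; the constant part integrates to $-\log\delta+c_{d}$, and the remaining term is the elementary double integral $\int_{0}^{1}\int_{0}^{1}(-\log|s-t|)\,\mathrm{d}s\,\mathrm{d}t$, which evaluates to an explicit universal constant. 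Replacing $-\log\delta$ by $-\log h_{a^{\ast}}$ then yields \eqref{eq:T_i_ast}.

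I do not expect a genuine obstacle: both estimates are quadrature-type computations against a piecewise-constant density, and the integrals involved are all elementary. The one point requiring care is the range-of-gaps bookkeeping, namely that every gap entering the two estimates must simultaneously lie in $[h_{a^{\ast}},1]$. This is precisely why the auxiliary nodes $a_{0}^{\ast}$ and $a_{n+1}^{\ast}$ are constrained by $h_{a^{\ast}}\le|a_{j+1}^{\ast}-a_{j}^{\ast}|\le 1$ for $j=0,n$: without the upper bound the majorant \eqref{eq:K_less_than_mlog} would be unavailable, and without the lower bound the logarithmic term could not be pulled out uniformly in terms of $h_{a^{\ast}}$.
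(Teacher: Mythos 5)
Your treatment of $S_{i}(\hat{a}^{\ast})$ is exactly the paper's argument: split at $a_{i}^{\ast}$, use the constant density on each panel, apply \eqref{eq:K_less_than_mlog}, evaluate $\frac{1}{\delta}\int_{0}^{\delta}(-\log u)\,\dd u = 1-\log\delta$, and replace $-\log\delta$ by $-\log h_{a^{\ast}}$; this part is complete and correct. For $T_{i}(\hat{a}^{\ast})$ your rescaling to the unit square is an equivalent reformulation of the paper's iterated-integral computation, but the one quantity you decline to evaluate is precisely where the claim fails to close:
\begin{align*}
\int_{0}^{1}\!\!\int_{0}^{1}(-\log|s-t|)\,\dd s\,\dd t \;=\; \frac{3}{2},
\end{align*}
so your argument proves $T_{i}(\hat{a}^{\ast}) \leq -\log h_{a^{\ast}} + \frac{3}{2} + c_{d}$, not the stated $-\log h_{a^{\ast}} + \frac{1}{2} + c_{d}$. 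You cannot recover the constant $\frac{1}{2}$ by this route, and in fact the discrepancy lies in the lemma rather than in your method: the paper's own proof bounds the inner integral of \eqref{eq:def_T_intint} by $-\frac{1}{\delta}\left[(a_{i+1}^{\ast}-x)\log(a_{i+1}^{\ast}-x)+(x-a_{i}^{\ast})\log(x-a_{i}^{\ast})\right]+c_{d}$, silently dropping the additive $+1$ that comes from $\int_{0}^{p}(-\log u)\,\dd u = p(1-\log p)$; restoring it turns the paper's $\frac{1}{2}$ into $\frac{3}{2}$, in agreement with your unit-square computation. The constant $\frac{3}{2}$ cannot be improved in general, since for $d\to\infty$ one has $K(u) = -\log|u| + c_{d} + \mathrm{O}(d^{-2})$ on $0<|u|\leq 1$, so with $\delta = 1$ the quantity $T_{i}(\hat{a}^{\ast}) - c_{d}$ tends to $\frac{3}{2}$. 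The correction is harmless downstream: it only increases the $\mathrm{O}(n)$ constant $C_{n}$ in Proposition~\ref{prop:diff_FC_FD} (replace $\frac{5}{2}$ by $\frac{7}{2}$ and $\frac{1}{2}$ by $\frac{3}{2}$). Apart from stating the value of this double integral and adjusting \eqref{eq:T_i_ast} accordingly, your proposal is complete and follows the same route as the paper.
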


\begin{proof}
We begin with $S_{i}(\hat{a}^{\ast})$. 
By using~\eqref{eq:K_less_than_mlog}, we have
\begin{align*}
S_{i}(\hat{a}^{\ast})  
& \leq 
-\frac{1}{a_{i}^{\ast} - a_{i-1}^{\ast}} \int_{a_{i-1}^{\ast}}^{a_{i}^{\ast}} \log |a_{i}^{\ast} - y| \, dy
-\frac{1}{a_{i+1}^{\ast} - a_{i}^{\ast}} \int_{a_{i}^{\ast}}^{a_{i+1}^{\ast}} \log |a_{i}^{\ast} - y| \, dy
+2 c_{d} \\
& = - \log |a_{i+1}^{\ast} - a_{i}^{\ast}| - \log |a_{i}^{\ast} - a_{i-1}^{\ast}| + 2(1+c_{d}) \\
& \leq -2 \log h_{a^{\ast}} + 2(1+c_{d}). 
\end{align*}
Next, we bound $T_{i}(\hat{a}^{\ast})$ from above. 
For the inner integral in~\eqref{eq:def_T_intint}, we use~\eqref{eq:K_less_than_mlog} to obtain 
\begin{align*}
\int_{a_{i}^{\ast}}^{a_{i+1}^{\ast}} K(x - y) \, \mathrm{d}\nu_{a^{\ast}}(y)
\leq 
-\frac{1}{a_{i+1}^{\ast} - a_{i}^{\ast}}
\left[
(a_{i+1}^{\ast} - x) \log (a_{i+1}^{\ast} - x) + 
(x - a_{i}^{\ast}) \log (x - a_{i}^{\ast})
\right]
+ c_{d}.
\end{align*}
Therefore, we have
\begin{align*}
T_{i}(\hat{a}^{\ast}) 
& \leq
-\frac{1}{(a_{i+1}^{\ast} - a_{i}^{\ast})^{2}}
\left[
(a_{i+1}^{\ast} - a_{i}^{\ast})^{2} \log (a_{i+1}^{\ast} - a_{i}^{\ast})  
- \frac{1}{2} (a_{i+1}^{\ast} - a_{i}^{\ast})^{2}
\right]
+ c_{d} \\
& = - \log (a_{i+1}^{\ast} - a_{i}^{\ast}) + \frac{1}{2} + c_{d} \\
& \leq - \log h_{a^{\ast}} + \frac{1}{2} + c_{d}. 
\end{align*}
\end{proof}

%-----

Here we are in a position to provide an upper bound of $F_{K,Q}^{\mathrm{C}}(n)- F_{K,Q}^{\mathrm{D}}(n)$. 

\begin{prop}
\label{prop:diff_FC_FD}
Let $a^{\ast} = (a_{1}^{\ast}, \ldots , a_{n}^{\ast}) \in \mathcal{R}_{n}$ be the minimizer of $I_{n}^{\mathrm{D}}$
satisfying \eqref{eq:assump_max_distance_a_ast}, 
and let $a_{0}^{\ast}$ and $a_{n+1}^{\ast}$ be chosen such that 
$\hat{a}^{\ast} = (a_{0}^{\ast}, a_{1}^{\ast}, \ldots , a_{n}^{\ast}, a_{n+1}^{\ast}) \in \mathcal{R}_{n+2}$
and $h_{a^{\ast}} \leq | a_{j+1}^{\ast} - a_{j}^{\ast} | \leq 1$ for $j = 0, n$, 
where $h_{a^{\ast}}$ is the separation distance given by \eqref{eq:sep_dist_a_ast}. 
Then, we have
\begin{align}
F_{K,Q}^{\mathrm{C}}(n)
- 
F_{K,Q}^{\mathrm{D}}(n)
\leq 
-
(3n+1) \log h_{a^{\ast}}
+ 
C_{n}
+ 
e_{n}^{(1)}(Q; \hat{a}^{\ast})
+ 
e_{n}^{(2)}(Q; \hat{a}^{\ast}), 
\label{eq:FCvsFD_step2}
\end{align}
where
\begin{align*}
C_{n} = \left( \frac{5}{2} + 3c_{d} \right) n + \frac{1}{2} + c_{d}.
\end{align*}
\end{prop}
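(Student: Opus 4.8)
The plan is to insert the per-index bounds \eqref{eq:S_i_ast} and \eqref{eq:T_i_ast} of the preceding lemma into the preliminary estimate \eqref{eq:FCvsFD_step1} of Lemma~\ref{lem:FCvsFD_step1} and then simply collect terms. Since \eqref{eq:FCvsFD_step1} already exhibits the two error quantities $e_{n}^{(1)}(Q; \hat{a}^{\ast})$ and $e_{n}^{(2)}(Q; \hat{a}^{\ast})$ that reappear verbatim on the right-hand side of the target inequality \eqref{eq:FCvsFD_step2}, the whole task reduces to showing
\[
\sum_{i=1}^{n} S_{i}(\hat{a}^{\ast}) + \sum_{i=0}^{n} T_{i}(\hat{a}^{\ast}) \leq -(3n+1)\log h_{a^{\ast}} + C_{n}.
\]

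First I would bound the two sums separately by counting summands. The $S$-sum has exactly $n$ terms, each controlled by \eqref{eq:S_i_ast}, giving
\[
\sum_{i=1}^{n} S_{i}(\hat{a}^{\ast}) \leq -2n\log h_{a^{\ast}} + 2n(1+c_{d}).
\]
The $T$-sum runs over the $n+1$ indices $i = 0, 1, \ldots, n$, and applying \eqref{eq:T_i_ast} to each gives
\[
\sum_{i=0}^{n} T_{i}(\hat{a}^{\ast}) \leq -(n+1)\log h_{a^{\ast}} + (n+1)\Bigl(\tfrac{1}{2} + c_{d}\Bigr).
\]
Adding these, the logarithmic parts combine to $-(2n + n + 1)\log h_{a^{\ast}} = -(3n+1)\log h_{a^{\ast}}$, matching the leading term of \eqref{eq:FCvsFD_step2}, while the constant parts combine to $2n(1+c_{d}) + (n+1)(\tfrac{1}{2} + c_{d}) = \bigl(\tfrac{5}{2} + 3c_{d}\bigr)n + \tfrac{1}{2} + c_{d}$, which is exactly $C_{n}$. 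Substituting the result back into \eqref{eq:FCvsFD_step1} then produces \eqref{eq:FCvsFD_step2}.

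The main subtlety, and the reason the boundary condition $h_{a^{\ast}} \leq |a_{j+1}^{\ast} - a_{j}^{\ast}| \leq 1$ for $j = 0, n$ is imposed both here and in the preceding lemma, is that the estimate \eqref{eq:T_i_ast} must be available at the two endpoint indices $i = 0$ and $i = n$, even though the lemma quotes it only for $i = 1, \ldots, n$. Its derivation uses nothing but the single gap $[a_{i}^{\ast}, a_{i+1}^{\ast}]$ together with $h_{a^{\ast}} \leq a_{i+1}^{\ast} - a_{i}^{\ast} \leq 1$, the upper constraint being needed for \eqref{eq:K_less_than_mlog} to apply and the lower one for the monotonicity of $-\log$; for the interior gaps these follow from \eqref{eq:assump_max_distance_a_ast} and the definition \eqref{eq:sep_dist_a_ast} of $h_{a^{\ast}}$, whereas for the two outermost gaps they hold precisely because the free auxiliary points $a_{0}^{\ast}$ and $a_{n+1}^{\ast}$ have been placed to satisfy the boundary condition. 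The same remark also legitimizes the endpoint cases $i = 1$ and $i = n$ of the $S$-sum, whose bound \eqref{eq:S_i_ast} likewise invokes the two outermost gaps. Granting this placement, no genuine obstacle remains: the proposition is a direct summation of the lemma's per-index estimates.
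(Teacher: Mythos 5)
Your proposal is correct and follows the same route as the paper, which simply combines Lemma~\ref{lem:FCvsFD_step1} with the bounds \eqref{eq:S_i_ast} and \eqref{eq:T_i_ast}; your arithmetic for the logarithmic coefficient $3n+1$ and for $C_{n}$ checks out exactly. Your additional remark justifying the use of \eqref{eq:T_i_ast} at the endpoint index $i=0$ (which the lemma states only for $i=1,\ldots,n$) addresses a detail the paper leaves implicit, and is handled correctly via the boundary condition on $a_{0}^{\ast}$ and $a_{n+1}^{\ast}$.
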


\begin{proof}
By combining Inequalities~\eqref{eq:FCvsFD_step1}, \eqref{eq:S_i_ast} and~\eqref{eq:T_i_ast}, 
we have~\eqref{eq:FCvsFD_step2}. 
\end{proof}

%-----

In addition, 
as a corollary of Proposition~\ref{prop:diff_FC_FD}, 
we can provide another error estimate of the proposed formula in terms of $F_{K,Q}^{\mathrm{C}}(n)$. 

\begin{cor}
On the same assumption as Proposition~\ref{prop:diff_FC_FD}, 
we have
\begin{align}
\sup_{
\begin{subarray}{c}
f \in \boldsymbol{H}^{\infty}(\mathcal{D}_{d}, w) \\
\| f \| \leq 1
\end{subarray}}
\left(
\sup_{x \in \mathbf{R}}
\left|
f(x) - L_{n}[a^{\ast}; f](x)
\right|
\right)
\leq
\frac{\hat{C}_{n} \, \hat{D}_{n}}{(h_{a^{\ast}})^{3+1/n}} \exp \left(- \frac{F_{K,Q}^{\mathrm{C}}(n)}{n} \right), 
\label{eq:ErrorEstimate_FC}
\end{align}
where 
$\hat{C}_{n} = \exp (C_{n}/n)$ and 
$\hat{D}_{n} = \exp \left[ \left( e_{n}^{(1)}(Q; \hat{a}^{\ast}) + e_{n}^{(2)}(Q; \hat{a}^{\ast}) \right)/n \right]$. 
\end{cor}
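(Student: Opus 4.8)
The plan is to derive the corollary purely by combining the error estimate of Theorem~\ref{thm:ErrorEstimate} with the bound on the gap $F_{K,Q}^{\mathrm{C}}(n) - F_{K,Q}^{\mathrm{D}}(n)$ furnished by Proposition~\ref{prop:diff_FC_FD}; the only real work is elementary manipulation of exponentials, so I do not expect any serious obstacle.

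First I would recall from Theorem~\ref{thm:ErrorEstimate} that under the stated assumptions
\[
\sup_{\begin{subarray}{c} f \in \boldsymbol{H}^{\infty}(\mathcal{D}_{d}, w) \\ \| f \| \leq 1 \end{subarray}}
\left( \sup_{x \in \mathbf{R}} \left| f(x) - L_{n}[a^{\ast}; f](x) \right| \right)
\leq \exp\left( -\frac{F_{K,Q}^{\mathrm{D}}(n)}{n} \right),
\]
so it suffices to re-express the right-hand side in terms of $F_{K,Q}^{\mathrm{C}}(n)$. To this end I would rearrange the inequality of Proposition~\ref{prop:diff_FC_FD} into the form
\[
-F_{K,Q}^{\mathrm{D}}(n) \leq -F_{K,Q}^{\mathrm{C}}(n) - (3n+1)\log h_{a^{\ast}} + C_{n} + e_{n}^{(1)}(Q; \hat{a}^{\ast}) + e_{n}^{(2)}(Q; \hat{a}^{\ast}),
\]
then divide through by $n$ and use $(3n+1)/n = 3 + 1/n$ so that the separation-distance contribution becomes $-(3+1/n)\log h_{a^{\ast}}$.

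Exponentiating both sides and invoking the definitions $\hat{C}_{n} = \exp(C_{n}/n)$ and $\hat{D}_{n} = \exp\!\left[ \left( e_{n}^{(1)}(Q; \hat{a}^{\ast}) + e_{n}^{(2)}(Q; \hat{a}^{\ast}) \right)/n \right]$, the right-hand side factors exactly, since $\exp\!\left( -(3+1/n)\log h_{a^{\ast}} \right) = (h_{a^{\ast}})^{-(3+1/n)}$. This gives
\[
\exp\left( -\frac{F_{K,Q}^{\mathrm{D}}(n)}{n} \right)
\leq \frac{\hat{C}_{n}\,\hat{D}_{n}}{(h_{a^{\ast}})^{3+1/n}} \exp\left( -\frac{F_{K,Q}^{\mathrm{C}}(n)}{n} \right).
\]

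Chaining this with the displayed bound from Theorem~\ref{thm:ErrorEstimate} yields Inequality~\eqref{eq:ErrorEstimate_FC}, completing the proof. The entire mathematical content has already been secured in Theorem~\ref{thm:ErrorEstimate} and Proposition~\ref{prop:diff_FC_FD}; what remains is only the bookkeeping of collecting the three correction factors $\hat{C}_{n}$, $\hat{D}_{n}$, and the negative power of $h_{a^{\ast}}$ into the asserted form, so I anticipate no genuine difficulty.
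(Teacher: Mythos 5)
Your proposal is correct and follows exactly the paper's route: the paper's proof is the one-line remark that the conclusion follows from Inequalities~\eqref{eq:ErrorEstimate} and~\eqref{eq:FCvsFD_step2}, and your rearrangement, division by $n$, and exponentiation is precisely the bookkeeping the paper leaves implicit. The identification $(3n+1)/n = 3+1/n$ and the factorization into $\hat{C}_{n}$, $\hat{D}_{n}$, and $(h_{a^{\ast}})^{-(3+1/n)}$ are all carried out correctly.
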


\begin{proof}
The conclusion follows from Inequalities~\eqref{eq:ErrorEstimate} and~\eqref{eq:FCvsFD_step2}.
\end{proof}

%-----

\begin{rem}
We have not succeeded in estimating the separation distance $h_{a^{\ast}}$ yet. 
However, from numerical experiments, we observed that 
\begin{align}
h_{a^{\ast}} 
\sim 
\begin{cases}
n^{-1/2} & (Q(x) \approx \beta |x|), \\
(\log n)/n & (Q(x) \approx \beta \exp(|x|)).
\end{cases}
\label{eq:sep_dist_h}
\end{align}
Furthermore, in 
\ifthenelse{\value{journal} = 0}{%
\cite[Section 5.2]{bib:TaOkaSu_PotApprox_2017}% 
}{%else
\citet[Section 5.2]{bib:TaOkaSu_PotApprox_2017}% 
}
we have had the rough estimates:
\begin{align}
\frac{F_{K,Q}^{\mathrm{C}}(n)}{n}
\sim
\begin{cases}
n^{1/2} & (Q(x) \approx \beta |x|), \\
n/\log n & (Q(x) \approx \beta \exp(|x|)).
\end{cases}
\label{eq:FC_over_n}
\end{align}
Therefore, we expect that 
\begin{align*}
\frac{\hat{C}_{n}}{(h_{a^{\ast}})^{3+1/n}} \exp \left(- \frac{F_{K,Q}^{\mathrm{C}}(n)}{n} \right) 
\sim
\begin{cases}
n^{(3+1/n)/2} \, \exp(-c' n^{1/2}) & (Q(x) \approx \beta |x|), \\
(n/\log n)^{3+1/n} \, \exp(-c'' n/\log n) & (Q(x) \approx \beta \exp(|x|)). 
\end{cases}
\end{align*}
However, 
precise estimates such as \eqref{eq:sep_dist_h} and~\eqref{eq:FC_over_n} may be difficult. 
Therefore, 
these estimates, as well as the estimate of $\hat{D}_{n}$, 
are themes of our future work. 
\end{rem}

\end{document}